\newcommand{\A}{\mathbb{A}}
\newcommand{\C}{\mathbb{C}}
\newcommand{\R}{\mathbb{R}}
\newcommand{\Q}{\mathbb{Q}}
\newcommand{\Z}{\mathbb{Z}}
\DeclareMathOperator{\diag}{diag}
\DeclareMathOperator{\GL}{GL}
\DeclareMathOperator{\SL}{SL}
\DeclareMathOperator{\PGL}{PGL}
\DeclareMathOperator{\N}{N}
\DeclareMathOperator{\SO}{SO}
\DeclareMathOperator{\Hom}{Hom}
\DeclareMathOperator{\tr}{tr}
\DeclareMathOperator{\Res}{Res}
\DeclareMathOperator{\Ad}{Ad}
\DeclareMathOperator{\id}{id}
\DeclareMathOperator{\Sym}{Sym}
\DeclareMathOperator{\Crit}{Crit}
\DeclareMathOperator{\Ind}{Ind}
\DeclareMathOperator{\triv}{triv}
\DeclareMathOperator{\cyc}{cyc}
\DeclareMathOperator{\pr}{pr}
\newcommand{\gp}{\mathfrak{p}}
\newcommand{\gm}{\mathfrak{m}}
\newcommand{\lieg}{{\mathfrak{g}}}
\newcommand{\lieh}{{\mathfrak{h}}}
\newcommand{\liel}{{\mathfrak{l}}}
\newcommand{\liek}{{\mathfrak{k}}}
\newcommand{\lieb}{{\mathfrak{b}}}
\newcommand{\liet}{{\mathfrak{t}}}
\newcommand{\lien}{{\mathfrak{n}}}
\newcommand{\lieu}{{\mathfrak{u}}}
\newcommand{\lieq}{{\mathfrak{q}}}
\newcommand{\liea}{{\mathfrak{a}}}
\newcommand{\rH}{\operatorname{H}}
\newcommand{\cS}{\mathcal{S}}
\newcommand{\cO}{\mathcal{O}}
\newcommand{\cP}{\mathcal{P}}
\newcommand{\cG}{\mathcal{G}}
\newcommand{\cU}{\mathcal{U}}
\newcommand{\cE}{\mathcal{E}}
\newcommand{\cV}{\mathcal{V}}
\newcommand{\Cl}{\mathscr{C}\!\ell_F^+}
\newcommand{\SGK}{S^G_K} 
\newcommand{\SHbeta}{\tilde{S}^H_{L_\beta}}
\newcommand{\w}{{\sf w}}
\newcommand{\mmu}{\boldsymbol{\mu}}
\theoremstyle{plain}
\newtheorem{theorem}{Theorem}[section]
\newtheorem{lemma}[theorem]{Lemma}
\newtheorem{corollary}[theorem]{Corollary}
\newtheorem{prop}[theorem]{Proposition}
\newtheorem{definition}[theorem]{Definition}
\newtheorem{theoremletter}{Theorem}
\title[$p$-adic $L$-functions and non-vanishing]{$L$-functions of $\GL_{2n}:$ \\ $p$-adic properties and non-vanishing of twists }
\author{\bf Mladen Dimitrov,  \ \ Fabian Januszewski,  \ \ A.~Raghuram}
\address{University of Lille, CNRS, UMR 8524,  Laboratoire Paul Painlev\'e, 59000 Lille, France}
\email{mladen.dimitrov@univ-lille.fr,  mladen.dimitrov@gmail.com}
\address{Institut f\"ur Mathematik, EIM, Paderborn University, Germany}
\email{fabian.januszewski@math.upb.de}
\address{Indian Institute of Science Education and Research, Dr.\ Homi Bhabha Road, Pashan, Pune 411021, India}
\email{raghuram@iiserpune.ac.in, raghuram.1@gmail.com}
\subjclass[2010]{Primary: 11F67; Secondary: 11S40, 11F55, 11F70}
\begin{document}

\begin{abstract}
The principal aim of this article is to attach and study  $p$-adic $L$-functions to cohomological cuspidal automorphic representations $\Pi$ of $\GL_{2n}$ over a totally real field  $F$ admitting a Shalika model.  We use a modular symbol approach, along the global lines of the work of Ash and Ginzburg, but our results are more definitive since we draw heavily upon the methods used in the recent and separate works of all the three authors.  By construction our $p$-adic $L$-functions are distributions on the Galois group of the maximal abelian extension of $F$ unramified outside $p\infty$. Moreover,  we work  under a weaker  Panchishkine type condition  on $\Pi_p$
 rather than the full ordinariness condition. Finally, we prove the 
so-called Manin relations between the $p$-adic $L$-functions at {\it all} critical points. This has the striking consequence that, given a unitary $\Pi$ whose standard $L$-function admits at least two critical points, and given a prime $p$ such that $\Pi_p$ is ordinary, the central critical value $L(\tfrac{1}{2}, \Pi\otimes\chi)$ is non-zero for all except  finitely many Dirichlet characters $\chi$ of $p$-power conductor. 
\end{abstract}

${  }$
\vspace{-1cm}
\maketitle

\addtocontents{toc}{\setcounter{tocdepth}{0}}
\section*{Introduction}

\addtocontents{toc}{\setcounter{tocdepth}{2}}

A crucial result in Shimura's work on the special values of $L$-functions of modular forms concerns the existence of a twisting character to ensure that a twisted $L$-value is non-zero at the center of symmetry (see \cite[Thm.  2]{shimura-mathann}). Since then it has been a very important problem in the analytic theory of automorphic $L$-functions to find  characters to render a twisted $L$-value non-zero. 
Rohrlich \cite{rohrlich} proved such a non-vanishing result in the context of cuspidal automorphic representations of $\GL_2$ over any number field. This was then generalized to $\GL_N$ over any number field by Barthel-Ramakrishnan \cite{barthel-ramakrishnan} and further refined by Luo \cite{luo}. However, neither \cite{barthel-ramakrishnan} nor \cite{luo} can prove this at the center of symmetry if $N \geqslant 4.$ (For us the functional equation will be normalized so that $s = \tfrac{1}{2}$ is the center of symmetry.) There have been other types of analytic machinery that have been brought to bear on this problem,  for example, see
Chinta-Friedberg-Hoffstein \cite{chinta-friedberg-hoffstein}. Even for simple situations involving $L$-functions of higher degree this problem is open. For example,   suppose $\pi $ is the unitary cuspidal 
automorphic representation associated to a primitive holomorphic cusp form for $\GL_2/\Q$, then it has been an open problem to find a  Dirichlet character $\chi$ so that the twisted symmetric cube $L$-function 
$L\left(\tfrac{1}{2}, (\Sym^3\pi) \otimes \chi\right)$ is non-zero at the center. In this article, we prove the following result.

\begin{theoremletter}
\label{thm:main-theorem}
Let $F$ be a totally real field and $\Sigma_\infty$  the set of all its real places. 
Let $\Pi$ be a unitary cuspidal automorphic representation of $\GL_{2n}/F$ admitting a Shalika model and such that $\Pi_\infty$ is cohomological with respect to a pure dominant integral weight $\mu$ such that 
\begin{equation}\label{eq:two-critical}
\mu_{\sigma,n} >  \mu_{\sigma, n+1}, \quad \text{for all} \  \sigma\in \Sigma_\infty. 
\end{equation}

Assume that for all primes $\gp$ above a given prime number $p$, $\Pi_\gp$ is unramified and $Q$-ordinary, where $Q$ is the parabolic of type $(n,n)$ of $\GL_{2n}/F$ (see \eqref{eqn:ordinary}). 

Then, for all but finitely many Dirichlet characters $\chi$  of   $p$-power conductor we have: 
$$ L\left(\tfrac{1}{2}, \Pi \otimes (\chi\circ \N_{F/\Q}) \right) \neq 0.$$
\end{theoremletter}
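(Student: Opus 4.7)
The plan is to exploit the $p$-adic $L$-function constructed earlier in the paper to turn the central non-vanishing statement into a rigidity statement for bounded $p$-adic measures. Specifically, the body of the paper produces, under the hypotheses of the theorem, a bounded $p$-adic measure $\mu_\Pi$ on $G_\infty=\Gal(F^{\mathrm{ab}}(p\infty)/F)$ whose integral against a finite-order character $\chi$ of $p$-power conductor, twisted by $\t^j$ for a critical integer $j$, equals the algebraic part of $L(\tfrac12+j,\Pi\otimes\chi)$ up to explicit non-vanishing interpolation factors. Boundedness is exactly what the weaker $Q$-ordinary (Panchishkine) hypothesis guarantees, and the Manin relations ensure that the \emph{same} measure $\mu_\Pi$ interpolates at \emph{every} critical point of $L(s,\Pi)$. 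The weight condition $\mu_{\sigma,n}>\mu_{\sigma,n+1}$ is precisely what guarantees that $L(s,\Pi)$ has at least two critical integers, say $j=0$ and some $j_0\neq 0$, so that Manin relations yield non-trivial information.

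\textbf{Rigidity step.} Suppose, for contradiction, that $L(\tfrac12,\Pi\otimes(\chi_k\circ\N_{F/\Q}))=0$ for an infinite sequence of pairwise distinct Dirichlet characters $\chi_k$ of $p$-power conductor. The composed characters $\chi_k\circ\N_{F/\Q}$ factor through the cyclotomic $\Z_p$-quotient of $G_\infty$, which is topologically $\Z_p$ up to finite torsion. Pushing $\mu_\Pi$ forward to this cyclotomic line and decomposing into torsion components, pigeonhole places infinitely many of the $\chi_k$ in a single component; on that component the Amice transform $A(T)\in\cO[[T]]$ of the pushforward has infinitely many distinct zeros of the form $\chi_k(\gamma)-1$ accumulating at the origin. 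Weierstrass preparation then forces $A\equiv 0$, so the whole component vanishes.

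\textbf{Contradiction.} By the Manin relations, the vanishing $A\equiv 0$ forces, on the same torsion component, the vanishing of the interpolation values at the non-central critical integer $j_0\neq 0$, that is $L(\tfrac12+j_0,\Pi\otimes(\chi\circ\N_{F/\Q}))=0$ for every such $\chi$ in that component (up to the non-zero interpolation factors). This contradicts the non-triviality of the measure $\mu_\Pi$ at a non-central critical point, which can be seen either from the non-triviality of the underlying modular symbol (the Shalika period does not vanish), or equivalently from the classical non-vanishing theorems of Barthel-Ramakrishnan and Luo at non-central critical points combined with a density argument picking out characters of $p$-power conductor in a given component. Hence the assumed infinite sequence of vanishings cannot exist, and Theorem~\ref{thm:main-theorem} follows.

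\textbf{Main obstacle.} The hard part is not the Iwasawa-theoretic rigidity argument sketched above, but rather the two non-trivial inputs it rests on: the construction of the bounded $p$-adic measure $\mu_\Pi$ under the weaker $Q$-ordinary hypothesis, and the Manin relations at \emph{all} critical points. Both require a careful cohomological setup along the Ash-Ginzburg modular symbol method, exploitation of the Shalika model and its associated periods, a precise analysis of the Hecke action at $p$ compatible with only the Panchishkine condition, and uniform control of the normalizations across critical points. The fact that one single measure simultaneously encodes every critical value is the crucial Manin-type rigidity, and it is exactly this uniform construction that powers the non-vanishing conclusion.
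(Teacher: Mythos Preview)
Your overall strategy---push the bounded measure $\mmu_{\widetilde\Pi}$ down to the cyclotomic line, use Weierstrass preparation, and exploit the Manin relations to compare the central point with a non-central critical point---is exactly the paper's. The structure of your rigidity/contradiction argument is the contrapositive of what the paper does directly, but logically equivalent.

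The one genuine soft spot is your justification for the non-triviality of the measure on each torsion component at the non-central critical point $j_0$. Neither of your proposed reasons works as stated. The non-vanishing of the Shalika period only tells you the intertwining $\cS^\eta_\psi$ is non-zero; it does not by itself prevent a fixed component of the pushed-forward measure from vanishing identically. Barthel--Ramakrishnan and Luo produce non-vanishing twists of arbitrary conductor, not specifically $p$-power conductor characters lying in a prescribed $\omega^m$-component of the cyclotomic tower, and there is no evident ``density argument'' that extracts such characters from their results.

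The paper's argument here is both simpler and sharper. Since $\Pi$ is unitary, the purity weight is $\w=0$, so $\mu_{\sigma,n}=-\mu_{\sigma,n+1}$ and the hypothesis $\mu_{\sigma,n}>\mu_{\sigma,n+1}$ forces $\mu_{\sigma,n}\geqslant 1$; hence $j_0=1\in\Crit(\mu)$ and $\tfrac12+j_0=\tfrac32$ lies on or beyond the edge of the critical strip. By Jacquet--Shalika, $L(s,\Pi\otimes\chi)\neq 0$ for $\Re(s)\geqslant 1$ and \emph{every} finite-order $\chi$. Thus $L(\tfrac32,\Pi\otimes\omega^{m-1}\chi)\neq 0$ for every $\chi$ of $p$-power conductor in every component $\omega^m$, and the interpolation formula (with its non-zero local factors) immediately gives $\omega^m(\mmu_{\widetilde\Pi})\neq 0$ for all $m$. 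Weierstrass preparation then finishes exactly as you say. Replacing your vague appeal with this observation closes the gap.
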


For notions and notations that are not defined in the introduction, the reader will have to consult the main body of the paper. 
A more general statement is proven in Theorem \ref{thm:non-vanishing}. 
Furthermore, we can prove a stronger non-vanishing result covering the nearly-ordinary case (see Corollary \ref{cor:stronger-non-vanishing})
as well as a simultaneous non-vanishing result (see Corollary \ref{cor:simutaenous}). For example, with a classical normalization of $L$-functions, it follows from our results that there are infinitely many Dirichlet characters $\chi$ such that 
\[L(6, \Delta \otimes \chi) \cdot L(17, \Sym^3(\Delta) \otimes \chi) \neq 0\] for the Ramanujan $\Delta$-function. 
Our methods are purely arithmetic and involve studying $p$-adic distributions on $\Cl(p^\infty)$, the Galois group of the maximal abelian extension of $F$ unramified outside $p\infty$, that are attached to suitable eigenclasses in the cohomology of $\GL_{2n}$. 

\medskip

Let's now describe our methods and results in greater detail. We begin with a purely cohomological situation, without any reference to automorphic forms or $L$-functions. Let $\cO_{F}$ be the ring of integers of  $F$ and $\mathfrak{d}$ its different. 
Take a pure dominant integral weight $\mu$ for $G = \Res_{\cO_{F}/\Z}\GL_{2n},$  and let $V^\mu_{E}$ be the algebraic irreducible representation of $G(E)$, for some `large enough' $p$-adic field $E$.    If $\cO$ is the ring of integers of $E$, then we also consider an 
$\cO$-lattice  $V^\mu_{\cO}$   stabilized by $G(\cO)$.
For any open compact subgroup $K$ of  $\GL_{2n}$ over the  finite adeles of $F$, let $\cV_{\cO}^\mu$ be the associated sheaf on the locally symmetric space $\SGK$ of $G$ with level structure $K$  and let's consider the compactly supported cohomology $\rH^q_c (\SGK, \cV_{\cO}^\mu)$ endowed with the usual  Hecke action. 
 Assume that for  $\gp$ dividing $p$,  $K_\gp$ is the parahoric subgroup   corresponding to the parabolic $Q$  and
 consider an eigenclass $\phi\in \rH^{t}_c(\SGK, \cV_{\cO}^\mu)$ having a non-zero  eigenvalue $\alpha_\gp$ for a particular Hecke operator $U_\gp$. 
 Here and throughout the paper  $t= |\Sigma_\infty|(n^2+n-1)$ denotes the top degree supporting cuspidal cohomology. The weight $\mu$ determines a contiguous string of integers $\Crit(\mu)$ which would correspond to the  set of critical points for an $L$-function. For each $j \in \Crit(\mu)$ we attach an $E$-valued distribution $\mmu_{\phi}^{j}$ on  $\Cl(p^\infty)$ and  show that  it is $\cO$-valued when 
$\phi$ is $Q$-ordinary, {\it i.e.}, it is a measure (see  diagram  \eqref{eqn:evaluation-map} to get a quick overview of the sheaf-theoretic maps that are involved in the construction). 
Most importantly we prove in Theorem \ref{thm:manin} a  Manin type relation, namely  for all $j,j' \in \Crit(\mu)$ we have  
     $$ \varepsilon_{\cyc}^{j'-j}(\mmu^{j}_\phi)  =       \mmu^{j'}_\phi,     $$
    where  $\varepsilon : \Cl(p^\infty)  \to  \Z_p^\times$ is  the  $p$-adic cyclotomic character and 
      $\varepsilon_{\cyc}$ is   the automorphism of
    $\cO[[\Cl(p^\infty)]]$ sending  $[x]$ to $\varepsilon ([x])[x]$, allowing us  
to define a measure  $\mmu_\phi= \varepsilon_{\cyc}^{-j}(\mmu^{j}_\phi)$ which is independent of $j$.

\medskip

Next  we apply the above considerations  to the situation when $\phi$ is related to a cuspidal automorphic representation $\Pi$ of 
$\GL_{2n}/F$ such that $\Pi_\infty$ is cohomological with respect to the weight $\mu$ (see \S\ref{sect:cuspidal-coh}). 
Friedberg and Jacquet related the period integral of cusp forms in $\Pi$ over the   Levi subgroup $H$ of $Q$ to the standard $L$-function $L(s, \Pi)$, and 
for the unfolding of this integral to see the Eulerian property the representation is assumed to have a Shalika model (see \S\ref{sec:shalika}). 
Such a cohomological interpretation  was used in \cite{GR-ajm} to deduce algebraicity results for the critical values of 
$L(s, \Pi \otimes \chi)$. The following result further investigates their  $p$-adic integrality properties. 
A more general $p$-adic interpolation statement is proven in Theorem \ref{thm:p-adic-interpolation}  under the   
assumption that $\Pi_\gp$ admits a $Q$-regular refinement $\widetilde\Pi_\gp$  for $\gp\mid p$ (see Definition \ref{Q-regular}) which is shown to be  always fulfilled  when  $\Pi_\gp$ is  $Q$-ordinary (see Lemma \ref{lem:ord-reg}).

\begin{theoremletter} \label{thm:padic-L}
Let $\Pi$ be a cuspidal automorphic representation of $\GL_{2n}/F$ admitting a $(\psi,\eta)$-Shalika model and such that 
$\Pi_\infty$ is cohomological of weight $\mu$. Assume that for all primes $\gp$ above a given prime number $p$, $\Pi_\gp$ is spherical and   $Q$-ordinary, and let $\alpha_{\gp}$ denote the corresponding $U_{\gp}$-eigenvalue.  
Given any isomorphism $i_p: \C\xrightarrow{\sim} \bar\Q_p$,  there exists a bounded $p$-adic distribution $\mmu_{\widetilde\Pi}$ on $\Cl(p^\infty)$
 such that  for any $j\in \Crit(\mu)$ and for any finite order character $\chi$ of $\Cl(p^\infty)$ of  conductor $\beta_\gp \geqslant 1$ at  all $\gp\mid p$  one has
  \begin{multline*}
  i_p^{-1}\left(  \int_{\Cl(p^\infty)}\varepsilon^j(x)  \chi(x)  d\mmu_{\widetilde\Pi}(x)\right) 
    \ = \\ 
 =\gamma \cdot \N^{jn}_{F/\Q}(\mathfrak{d})   \cdot   
   \prod_{\gp\mid p}\left(\alpha_{\gp}^{-1} 
    q_\gp^{n(j+1)}\right)^{\beta_\gp} \cdot 
    { \cG(\chi_f)^{n} \cdot  L(j+\tfrac{1}{2}, \Pi_f \otimes \chi_f) }
     \zeta_\infty(j+\tfrac{1}{2};W_{\Pi_\infty,j}^{(\varepsilon^j\chi\eta)_\infty}), \text{ where}
  \end{multline*}
   $\cG(\chi_f)$  is the  Gauss sum, the zeta factor is non-zero by \eqref{eqn:sun} and $\gamma\in\Q^\times$ is as in \eqref{eq:gamma}.
\end{theoremletter}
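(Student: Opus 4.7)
The plan is to combine the abstract cohomological construction outlined above with the Friedberg--Jacquet integral representation for $L(s, \Pi \otimes \chi)$. First I extract from $\Pi$ a cohomology class $\phi_{\widetilde\Pi} \in \rH^t_c(\SGK, \cV^\mu_\cO)$: since $\Pi_\gp$ is spherical but the cohomological construction demands parahoric level at $p$, I pass to the $Q$-ordinary refinement $\widetilde\Pi_\gp$ (available by hypothesis via Lemma \ref{lem:ord-reg}) and pick out a $U_\gp$-eigenvector with non-zero eigenvalue $\alpha_\gp$ inside the Jacquet module. Using rationality of cuspidal cohomology and the isomorphism $i_p$, I can rescale $\phi_{\widetilde\Pi}$ so as to land in the integral lattice $\rH^t_c(\SGK, \cV^\mu_\cO)$, recording the comparison between the Shalika-type period and the chosen integral cohomological basis in a rational constant $\gamma$.

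Next, I apply the abstract machine to $\phi_{\widetilde\Pi}$, obtaining $E$-valued distributions $\mmu^j_{\phi_{\widetilde\Pi}}$ for every $j \in \Crit(\mu)$, each $\cO$-valued thanks to the $Q$-ordinary hypothesis, and all stitched together through the Manin-type relations of Theorem \ref{thm:manin} into a single bounded measure $\mmu_{\widetilde\Pi} = \varepsilon_{\cyc}^{-j}(\mmu^j_{\phi_{\widetilde\Pi}})$ on $\Cl(p^\infty)$. To evaluate $\int \varepsilon^j(x)\chi(x)\, d\mmu_{\widetilde\Pi}(x)$, I unwind the definition of $\mmu^j_\phi$ through the sheaf-theoretic evaluation map \eqref{eqn:evaluation-map}: this expresses the integral as the pairing of $\phi_{\widetilde\Pi}$ with an $H$-cycle twisted by $\chi$, which, after translation to the adelic picture, is precisely a Friedberg--Jacquet period integral of a cusp form in the $(\psi,\eta)$-Shalika model of $\Pi$.

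The unfolding of this period, valid by the Shalika-model hypothesis, factors it as an Eulerian product of local zeta integrals at $s = j + \tfrac12$; the places away from $p$ and $\infty$ reassemble into the finite $L$-function $L(j+\tfrac12, \Pi_f \otimes \chi_f)$, while the archimedean integral is $\zeta_\infty(j+\tfrac12; W_{\Pi_\infty,j}^{(\varepsilon^j\chi\eta)_\infty})$, and the normalization $\N^{jn}_{F/\Q}(\mathfrak{d})$ emerges from the discrepancy between arithmetic and adelic Haar measures on the Levi $H$ of $Q$. The main technical obstacle is the computation at primes $\gp \mid p$: the $U_\gp$-eigenvector associated to $\widetilde\Pi_\gp$ must be compared with the local Shalika vector, and the resulting local zeta integral against a character of conductor $\beta_\gp$ needs to be identified with $\alpha_\gp^{-\beta_\gp}\, q_\gp^{n(j+1)\beta_\gp}\, \cG(\chi_\gp)^n$ (up to the local $L$-factor that is absorbed into $L(j+\tfrac12, \Pi_f \otimes \chi_f)$). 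This is done by a change of variables in the local integral that produces the Gauss sum, together with the $U_\gp$-eigenvalue equation which extracts the factor $\alpha_\gp^{-\beta_\gp}$; the $Q$-ordinariness hypothesis ensures $\alpha_\gp$ is a $p$-adic unit, so the compatible system of integrals is indeed a bounded measure and the interpolation formula takes integral values as stated.
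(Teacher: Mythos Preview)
Your proposal is correct and follows essentially the same route as the paper: reduce to the more general interpolation statement (Theorem~\ref{thm:p-adic-interpolation}) via Lemma~\ref{lem:ord-reg}, then invoke the Manin relations (Theorem~\ref{thm:manin}) to pass from $\mmu^{j,\eta}_{\phi_{\widetilde\Pi}}$ to $\mmu_{\widetilde\Pi}$. Two small misattributions worth correcting: the constant $\gamma$ is not a period-comparison scalar but an explicit volume factor~\eqref{eq:gamma} arising when the sum over $\pi_0(\SHbeta)$ is rewritten as an adelic integral over $Z(\A)H(\Q)\backslash H(\A)$; and the factor $\N^{jn}_{F/\Q}(\mathfrak{d})$ does not come from Haar-measure normalizations on $H$ but from the local zeta integrals themselves (Propositions~\ref{prop:FJL-fct} and~\ref{prop:localbirch}), reflecting that the additive character $\psi$ has conductor $\mathfrak{d}^{-1}$ rather than $\cO_F$. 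Also note that since $\chi_\gp$ is ramified the local $L$-factor at $\gp\mid p$ is already trivial, so nothing is ``absorbed'' there.
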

 
 Let us hint on how we deduce Theorem \ref{thm:main-theorem}. Theorem \ref{thm:padic-L} whose formulation 
 implicitly uses the earlier established Manin relations gives  congruence relations between  successive critical values, 
 while \eqref{eq:two-critical}   translates   into $\tfrac{3}{2} \in \Crit(\mu)$. 
Since the complex $L$-function of the unitary cuspidal automorphic  representation 
$\Pi$ does not vanish for $\Re(s) \geqslant 1$ we deduce that 
$L(\tfrac{3}{2}, \Pi \otimes \chi)$ never vanishes, which in turn implies the
non-vanishing of $L(\tfrac{1}{2}, \Pi \otimes (\chi\circ \N_{F/\Q}))$
 for all but finitely many Dirichlet characters $\chi$ (see the proof of Theorem \ref{thm:non-vanishing}).

Let's mention some relevant papers in the literature. First of all, Ash and Ginzburg \cite{ash-ginzburg} started the study of 
$p$-adic $L$-functions for $\GL_{2n}$ over a totally real field by considering the analytic theory developed by Friedberg and Jacquet \cite{friedjac}. However
to quote the authors of \cite{ash-ginzburg}, their results are definitive only for $\GL_4$ over $\Q$ and for cohomology with constant coefficients. Furthermore, they construct their distributions on local units while only suggesting that one should really work, as we do in this paper, on $\Cl(p^\infty)$. This article uses the more recent  techniques developed in independent papers by all the three authors; namely, 
\cite{dim-padicL}, \cite{GR-ajm}, and \cite{J-imrn}. Finally, we mention Gehrmann's thesis \cite{gehrmann} which also constructs $p$-adic $L$-functions in essentially a similar context, but his methods are entirely different from ours.

To conclude the introduction, our emphasis is on the purely sheaf-theoretic nature of the construction of the distributions attached to eigenclasses in cohomology which leads to a purely algebraic proof of Manin relations in a very general context. When specialized to a cohomology class related to a representation $\Pi$ of $\GL_{2n}$, we get $p$-adic interpolation of the critical values of the standard $L$-function $L(s, \Pi)$,  and Manin relations give non-vanishing of twists $L(s, \Pi \otimes \chi)$ at the center of symmetry. A non-vanishing theorem in the realms of analytic number theory admitting 
a decidedly algebraic proof is philosophically piquant. 

\bigskip

\noindent {\it Acknolwedgements:} {\Small This project started when the three of us met at a conference in July 2014 at IISER Pune on $p$-adic aspects of modular forms. Any subset of two of the authors is grateful to the host institute or university of the third author during various stages of this work. MD and AR are grateful to an Indo-French research grant from CEFIPRA that has facilitated visits by each to the work-place of the other. MD acknowledges support from the ANR grants  CEMPI  (ANR-11-LABX-0007-01) and GALF  (ANR-18-CE40-0029).
AR is grateful to Charles Simonyi Endowment that funded his stay at the Institute for Advanced Study, Princeton.}

\newpage
\tableofcontents
\section{\bf Automorphic cohomology}
\label{sec:cohomology}

 Recall that $F$ is a totally real number field with  ring of integers $\cO_{F}$ and  set of infinite places $\Sigma_\infty$. 
For  a  set of places $\Sigma$, we denote by $\A^{(\Sigma)}$ the topological ring of adeles of $\Q$ outside $\Sigma$. Let 
$ \A_F=\A\otimes_{\Q} F$ (resp.  $\A_{F,f}$) be the group of adeles (resp. finite adeles) of $F$.

We consider  $G=\Res_{\cO_{F}/\Z}(\GL_{2n})$ as a reductive group scheme over $\Z$, quasi-split over $\Q$ and  let $Z=\Res_{\cO_{F}/\Z}(\GL_1)$ be the center of $G$. The standard Borel subgroup $B\subseteq  G$  is defined as the restriction of scalars of the standard Borel subgroup of all upper triangular matrices in $\GL_{2n}/\cO_{F}$. 
We have $B=TN$, where  $N$ is the unipotent radical of $B$ and $T$ is the standard torus of all diagonal matrices.  Let 
$H =\Res_{\cO_{F}/\Z}(\GL_n \times \GL_n)$, and $\iota : H \hookrightarrow G$ be the map that sends $(h_1,h_2)$ to 
$\left(\begin{smallmatrix} h_1 & 0 \\ 0 & h_2 \end{smallmatrix}\right)$.  Let $Q=HU$ be the standard parabolic subgroup of type $(n,n)$
whose  Levi subgroup  is $H$ and  unipotent radical is $U$. 
Finally, the Shalika subgroup $S$ of $G$ is defined as
$ S=\left\{   \left( \begin{smallmatrix} h &  0 \\ 0 &  h \end{smallmatrix}\right) 
\left( \begin{smallmatrix} 1 &  X\\ 0 &  1 \end{smallmatrix}\right) |
h\in \GL_n,  X\in {\rm M}_n \right\}$.

For any commutative ring $A$, we let $\lieg_A$, $\lieb_A$, $\lieq_A$, $\liet_A$, $\lieh_A$, $\lien_A$ and $\lieu_A$ stand for the Lie algebras of $G$, $B$, $Q$, $T$, $H$, $N$ and $U$ over $A$, respectively. For $\liea_A$ any amongst  these, we let $\cU(\liea_A)$ stand for the enveloping algebra over $A$.
In the particular case $A=\R$, let $\lieg_\infty=\lieg_\R\otimes_\R\C$ denote the complexification and likewise for the other groups.
The reader is referred to \cite{jantzen} as a general reference for integral Lie algebras and their enveloping algebras.

For any real reductive Lie group $\mathcal{G}$ we let $\mathcal{G}^\circ$ denote the connected component of the identity.
Let $G_\infty = G(\R),$ and similarly $Z_{\infty} = Z(\R)$.

 \subsection{Pure weights}
 \label{sec:weights-repns}
We identify integral weights $\mu$ of $T$ with tuples of weights $\mu=(\mu_{\sigma})_{\sigma\in \Sigma_\infty}$ where  $\mu_{\sigma}=(\mu_{\sigma,1},\dots ,\mu_{\sigma,2n})\in \Z^{2n}$.  A weight $\mu$ is $B$-dominant if
\begin{equation}\label{eq:B-dominant}
\mu_{\sigma,1}\geqslant \dots \geqslant \mu_{\sigma,2n}, \ \text{for all} \ \sigma\in \Sigma_\infty.
  \end{equation}
 Let $X^*_+(T)$ be the set of all such dominant integral weights.  For $\mu \in X^*_+(T)$ denote by  $V^\mu$ the unique algebraic irreducible rational representation of $G$ of highest weight $\mu$. 
 For any field  $E$ over which $\mu$ is defined, we denote by  $V^\mu_{E}$  its $E$-valued points. 
 Denote by  $\mu^\vee$  the highest weight of the contragredient $(V^{\mu})^{\vee}$ of $V^\mu$ which we consider as a rational character of $B$.

We call $\mu$ {\em pure} if there exists  $\w\in \Z$, called the {\em purity weight} of $\mu$, such that 
$$  V^\mu=V^{\mu^\vee}\otimes (\N_{F/\Q}\circ\det)^\w,$$ 
where $\N_{F/\Q}:\Res_{F/\Q}(\GL_1)\to \GL_1$ denotes the norm homomorphism. If $\mu$ is pure then 
\begin{equation}\label{eq:pure}
\mu_{\sigma, i} + \mu_{\sigma,2n-i+1} = \w, \text{  for all } \sigma\in \Sigma_\infty
\text{   and for all  } 1 \leqslant i \leqslant n.
  \end{equation}
  In particular, $\sum_{i=1}^{2n} \mu_{\sigma,i}=\w n$ is independent of $\sigma$. We let $X^*_0(T)\subset X^*_+(T)$ stand for the pure dominant integral weights of $T$. Given any $\mu \in X^*_0(T)$, define the set 
  \begin{equation}\label{eq:crit}
    \Crit(\mu)  =  \{j \in \Z \mid \mu_{\sigma,n} \ \geqslant \ j  \ \geqslant \ \mu_{\sigma,n+1}, \ \forall \ \sigma\in \Sigma_\infty \}.
  \end{equation}
   It is  well known that only pure weights support cuspidal cohomology, and 
  the motivation for this definition comes from the fact proved in \cite[Prop. 6.1]{GR-ajm} that 
  if $\Pi$ is a cuspidal automorphic representation of $G(\A)$ which is cohomological with respect to $\mu$ (see \S\ref{sect:cuspidal-coh}) then $\tfrac{1}{2}+j$ with $j \in \Z$ is critical for the standard $L$-function $L(s, \Pi \otimes \chi)$ for any finite order character $\chi$ if and only if $j \in \Crit(\mu)$. Note that  the central  point $\tfrac{\w+1}{2}$ of $L(s, \Pi \otimes \chi)$ is critical, 
({\it i.e.},     $\tfrac{\w}{2}\in \Crit(\mu)$)  if and only if $\w$ is even.

\subsection{Integral lattices}\label{sec:int-lattices}

  Let $E$ be a finite extension of $\Q_p$  and let $\cO$ be its ring of integers. 
  Given $\mu \in X^*_+(T)$, we consider $V^\mu_{E}$ as a representation of $G(E)$.

  Let $v_0 \in V^\mu_{E}$ be a non-zero lowest weight vector.  Then the unipotent radical $N^-(E)$ of the Borel subgroup $B^-(E)$ of lower triangular matrices fixes $v_0$, while  $T(E)$  acts on  $v_0$ via the character $-\mu^{\sf v} = w_{2n}(\mu)$ where 
$w_{2n}$ is the Weyl group element of longest length.

Observe that
  \begin{equation}
  \label{eqn:O-lattice-in-V-mu}
    V^\mu_{\cO}  =  \cU(\lien_\cO) v_0 
  \end{equation}
  is an $\cO$-lattice $V^\mu_{E}$ endowed with a natural action of  $G(\cO)$.

  We fix once and for all uniformizers $\varpi_{\gp}\in F_{\gp}$ and put 
  $t_{\gp} = \iota(\varpi_{\gp}\cdot{\bf1}_n,{\bf1}_n) \in \GL_{2n}(F_{\gp})$.
  Define  for any integral  multi-exponent $\beta=(\beta_{\gp})_{\gp\mid p}$ the element
  \begin{equation}
  \label{eqn:multi-exponent-tp}
    t_p^\beta=\prod_{\gp\mid p}t_{\gp}^{\beta_{\gp}}\in T(\Q_p),
  \end{equation}
and  consider the semi-group 
  \begin{equation}
  \label{eqn:delta-p-plus}
    \Delta_p^+=\{t_p^\beta\;\mid\;
 \;\beta_{\gp}\in \Z_{\geqslant 0}, \ \forall\gp\mid p\}.
  \end{equation}
  Then by our choice of dominance condition, we have for any $t\in\Delta_p^+$:
  \begin{equation}
  \label{eqn:adjoint-t-action}
    {\Ad}(t)Q(\cO)=tQ(\cO)t^{-1}\subseteq Q(\cO) \text{ and }  
    {\Ad}(t^{-1})U^-(\cO)=t^{-1}U^-(\cO) t\subseteq U^-(\cO).
  \end{equation}
  Consider the standard maximal  parahoric subgroup $J_p=\prod_{\gp\mid p} J_\gp\subset G(\Z_p)$, where 
   \begin{equation}
  \label{eqn:full-parahoric}
   J_\gp=t_\gp^{-1}\GL_{2n}(\cO_{F,\gp})  t_\gp\cap \GL_{2n}(\cO_{F,\gp}).
    \end{equation} 
    Since $J_p \supset Q(\Z_p)$ the parahoric decomposition is given by 
      \begin{equation}
  \label{eqn:parahoric-decomposition}
   J_p= (J_p\cap U^-(\Z_p))Q(\Z_p)=Q(\Z_p)(J_p \cap U^-(\Z_p)). 
  \end{equation}
  Using  \eqref{eqn:adjoint-t-action} and   \eqref{eqn:parahoric-decomposition} one sees that 
    \begin{equation}   \label{eqn:semi-group}
    \Lambda_p=J_p \Delta_p^+ J_p= Q(\Z_p) \Delta_p^+ (J_p\cap U^-(\Z_p))
      \end{equation}
 is a semi-group. Moreover since $U^-(\Z_p)\subset N^-(\Z_p)$ acts trivially on $v_0$, the
     $J_p$-action on $V^\mu_{\cO}$   extends uniquely  to an action $\bullet$ of the semi-group $\Lambda_p$
    by letting $\Delta_p^+$  act trivially on the lowest weight vector $v_0$. Then  for all $t\in\Delta_p^+$ and $v\in V^\mu_{\cO}$
    one has:
    \begin{equation}  \label{eqn:two-actions}
t\bullet v= \mu^\vee(t) (t\cdot v)
  \end{equation}
   In fact by  \eqref{eqn:O-lattice-in-V-mu} one can 
    write $v=m\cdot v_0$ for some $m\in \cU(\lien_\cO)$ and using \eqref{eqn:adjoint-t-action} one finds:
      \[
    t\bullet v= t\bullet (m\bullet v_0)  = {\Ad}(t)(m)\bullet(t\bullet v_0)={\Ad}(t)(m)\cdot v_0=\mu^\vee(t) {\Ad}(t)(m)(t\cdot v_0)=\mu^\vee(t) (t\cdot v).
      \]

\subsection{Local systems on locally symmetric spaces for $\GL_{2n}$}
\label{sec:local-systems}
The standard maximal compact subgroup of $G_\infty$ will be denoted $C_{\infty}=\prod_{\sigma\in \Sigma_\infty}C_\sigma$, where 
 $C_{\sigma}\simeq {\rm O}_{2n}(\R) $. The determinant identifies the   group of connected components  $C_{\infty}/C_{\infty}^\circ$ with  
$F_{\infty}^\times/F_{\infty}^{\times\circ}\cong \{\pm 1\}^{\Sigma_\infty}$. Let $K_\infty=C_{\infty} Z_{\infty}$ and for 
 any open compact subgroup $K$ of $G(\A_f)$ consider the  locally symmetric space:
\begin{equation}
\label{eqn:YK}
\SGK= G(\Q)\backslash G(\A)/K K_{\infty}^\circ  = G(\Q)\backslash \left((G_\infty/ K_{\infty}^\circ ) \times G(\A_f)/K\right).
\end{equation}
Note that $K_{\infty}^\circ  = C_{\infty}^\circ Z_{\infty}^\circ=C_{\infty}^\circ Z_{\infty}$ since $2n$ is even. In general, $\SGK$ is only a real orbifold. 
In the sequel we assume that $K$ is {\it sufficiently small} in the sense that for all $g\in G(\A)$,
\begin{equation}\label{eq:neat}
G(\Q)\cap gKK_{\infty}^\circ  g^{-1}= Z(\Q)\cap KK_{\infty}^\circ ,
\end{equation}
which implies in particular that $\SGK$ is a real manifold.

Given a  left  $G(\Q)$-module $V$ one can define $\cV_K$  as the  sheaf of locally constant sections of the local system: 
$$G(\Q)\backslash \left(G(\A) \times V \right)/KK_{\infty}^\circ \to \SGK,$$
where $\gamma(g,v)k=(\gamma g k,\gamma\cdot v)$ for all $\gamma\in G(\Q)$, $k\in K K_{\infty}^\circ $. 
 Consider the canonical fibration $\pi: (G(\R)/ K_{\infty}^\circ ) \times G(\A_f)/K \to \SGK$
given by going modulo the left action of $G(\Q)$. Then for any open $\cU \subset \SGK$ one has 
the sections $\cV_K(\cU)$ over $\cU$ to be the set of all locally constant $s : \pi^{-1}(\cU) \to V$  
such that $s(\gamma\cdot x) = \gamma\cdot s(x)$ for all $\gamma \in G(\Q), x \in \pi^{-1}(\cU).$ 
We denote by $\cV_{K,E}^{\mu}$ the sheaf associated to  $V^\mu_{E}$. 
The sheaf $\cV_{K,E}^{\mu}$ is 
non-trivial if and only if  
\begin{equation}\label{center}
\mu(Z(\Q)\cap K K_{\infty}^\circ )=\{1\}.
\end{equation}
 Condition (\ref{center}) is always satisfied if $\mu$ is pure, 
since $\det(F^\times \cap K K_{\infty}^\circ )\subset \cO_{F}^\times\cap F_{\infty}^{\times\circ}$.

In order to attach a sheaf to $V^\mu_{\cO}$ we need a slightly different construction.  
Given a left $K$-module $V$ satisfying \eqref{center}  define $\cV_K$ instead as the  sheaf of locally constant sections
of:
$$G(\Q)\backslash (G(\A)\times V) /K K_{\infty}^\circ   \to  Y_K,$$ 
with left $G(\Q)$-action and right $K K_{\infty}^\circ    $-action given by 
$\gamma(g,v)k=(\gamma g k , k^{-1}\cdot v)$.
Since $K$  acts on $V^\mu_{\cO}$ throught its $p$-component $K_p\subset G(\Z_p)\subset G(\cO)$ we obtain a
sheaf $\cV_{\cO}^\mu$ on $\SGK$. 

When the actions of  $G(\Q)$ and  $K$ on  $V$ extend compatibly into a left action of  $G(\A)$,  the two resulting  local systems  are  isomorphic by $(g,v)\mapsto (g,g^{-1}\cdot v)$, justifying  the abuse of notation.

\subsection{Hecke operators}\label{sec:hecke}
  For any  open compact subgroups $K'\subseteq  K$  of $ G(\A_f)$ the natural map 
  $p_{K',K} :  S^G_{K'}\to \SGK$  induces an isomorphism of sheaves $ p_{K',K}^*\cV_K\overset{\sim}{\to} \cV_{K'}$.
  
  When the $K$-action on $V$ extends to an action of a semi-group containing $K$ and $\gamma$, then one can define  a  Hecke operator
  $[K\gamma K]$ as a composition of three maps:
  $$
    [K\gamma K]=\mathrm{Tr}(p_{\gamma K\gamma^{-1}\cap K,K})\circ [\gamma] \circ  p_{K\cap \gamma^{-1}K\gamma,K}^* :\quad \rH_c^q(\SGK,\cV_K) \to \rH_c^q(\SGK,\cV_K), 
  $$
where $p_{K\cap \gamma^{-1}K\gamma,K}^*$ is the pull-back,  $ \mathrm{Tr}(p_{\gamma K\gamma^{-1}\cap K,K})$ is the finite flat trace and 
$$ [\gamma]:\rH_c^q(S^G_{K\cap \gamma^{-1}K\gamma},\cV_{K\cap \gamma^{-1}K\gamma})\to \rH_c^q(S^G_{\gamma K\gamma^{-1}\cap K},\cV_{\gamma K\gamma^{-1}\cap K})$$ 
is induced by the morphism of local systems given by $(g,v)\mapsto (g\gamma^{-1}, \gamma\cdot v)$ in the case of a right $K$-action.
  
  When $K_p\subset J_p$, the above construction applies  to $V^\mu_{\cO}$ on which the semi-group $\Lambda_p$ acts 
  by the $\bullet$-action (see \eqref{eqn:semi-group}) yielding for each   $t\in\Delta_p^+$ a Hecke operator $[KtK]$ on  
  $\rH_c^q(\SGK,\cV_{K,\cO}^\mu)$. 
Note that while the natural inclusion $V^\mu_{\cO}\subseteq  V^\mu_{E}$ is $K_p$-equivariant, it is not $\Lambda_p$-equivariant (see \eqref{eqn:two-actions}). As a consequence the natural map 
$\rH_c^q(\SGK,\cV_{K,\cO}^\mu)\to \rH_c^q(\SGK,\cV_{K,E}^\mu)$
is equivariant for the  $\bullet$-action of  $[KtK]$ on  the source and the action of  the optimally integral Hecke operator 
$[KtK]^\circ=\mu^\vee(t)[KtK]$ on the target. To ensure  compatibility with extension of scalars we will also denote   $[KtK]^\circ$ the 
Hecke operator $[KtK]$ acting (via the  $\bullet$-action) on $\rH_c^q(\SGK,\cV_{K,\cO}^\mu)$. 

For any prime $\gp\mid p$ of $F$ the following Hecke operators will play an important role: 
 \begin{equation}\label{eq:Up}    
 U_{\gp}  =     [K t_\gp K] \text{ and  }  U_\gp^\circ=\mu^\vee(t_\gp) U_\gp.
  \end{equation}

For  $\beta=(\beta_\gp)_{\gp\mid p}$ with $\beta_\gp\in\Z_{\geqslant 0}$ we  let
 $U_{p^\beta}  =     [K t_{p^\beta} K] $ and $U_{p^\beta}^\circ=\mu^\vee(t_{p^\beta}) U_{p^\beta}.$

  Since the image of $\rH^{q}_c(\SGK, \cV_{\cO}^\mu)$ in $\rH^{q}_c(\SGK, \cV_{E}^\mu)$
  is a finitely generated  $\cO$-module, we  may assume that $E$ is  large enough so that all  $U_{\gp}^\circ$-eigenvalues  belong to $\cO$.

\section{\bf Distributions attached to cohomology classes for $\GL_{2n}$}
\label{sec:distributions}

Let $F_p=F\otimes_\Q\Q_p=\prod_{\gp \mid p}F_{\gp}$. 
For  a prime $\gp\mid p $ of $F$ we denote by   $I_\gp$ (resp., $J_\gp$)  the standard Iwahori (resp., parahoric) subgroup of 
$K_\gp^\circ=\GL_{2n}(\cO_{F,\gp})$ consisting of elements whose reduction modulo the  $\gp$ belongs to $B(\cO_{F}/{\gp})$ 
(resp., to $Q(\cO_{F}/{\gp})$).  

 We let $K=K^{(p)}\times \prod_{\gp\mid p}K_\gp$ be an open compact subgroup of $G(\A_f)$ such that: 
\begin{enumerate}[label=(K\arabic*), ref=(K\arabic*)]
\item \label{condition-K1} $K^{(p)}$ is the principal congruence subgroup of modulus  $\gm$, an ideal of $\cO_F$ which is 
relatively prime to $p$, and  $K^{(p)}G(\Z_p)$ satisfies \eqref{eq:neat},   
\smallskip
\item \label{condition-K2} $\left(\begin{smallmatrix}
T_n(\cO_{F,\gp})&M_n(\cO_{F,\gp}) \\{\bf 0}_n&T_n(\cO_{F,\gp})
\end{smallmatrix}\right)  \subseteq  K_\gp \subseteq  J_\gp$ for all $\gp \mid p$. 
\end{enumerate}

An  important role will be played by the  matrix   $\xi\in \GL_{2n}(\A_F)$, where 
\begin{equation}\label{eq:xi}
 \xi_{\gp}=
  \begin{pmatrix}{\bf1}_n&w_n\\{\bf0}_n&w_n\end{pmatrix}\in \GL_{2n}(\cO_{F,\gp}),  \text{ for all } \gp\mid p \text{ , and } 
  \xi_v={\bf1}_{2n},  \text{ for all } v\nmid p. 
  \end{equation}
where ${\bf1}_n$ and ${\bf0}_n$ are the $n \times n$ identity and zero matrices, respectively,  and $w_n$ is the longest 
length element in the  Weyl group of $\GL_n,$ whose $(i,j)$-entry is $\delta_{i, n-j+1}$. 

We have
$\xi_{\gp}^{-1} = 
\left(\begin{smallmatrix}
{\bf1}_n&-{\bf1}_n\\{\bf 0}_n&w_n
\end{smallmatrix}\right)$. 
Once and for all we record the identities
\begin{align}\label{eq:xi1}
  \xi_{\gp}^{-1}\cdot\begin{pmatrix}A&B\\C&D\end{pmatrix}\cdot\xi_{\gp}
  &=
  \begin{pmatrix}A-C & (A-D+B-C)w_n\\w_n C & w_n (C+D) w_n\end{pmatrix}, \text{  and }  \\
  \xi_{\gp}\cdot\begin{pmatrix}A&B\\C&D\end{pmatrix}\cdot\xi_{\gp}^{-1}
  &=
  \begin{pmatrix}A+w_nC & w_n D w_n-A+Bw_n-w_nC\\w_nC & w_n D w_n-w_nC\end{pmatrix}. \label{eq:xi2}
\end{align}

\subsection{Automorphic cycles} 
\label{sec:shl}
For any open-compact subgroup $L \subset H(\A_f)$ we consider  the locally symmetric space: 
\begin{equation}
\tilde{S}^H_L   =  H(\Q) \backslash H(\A) /  L L_\infty^\circ\text{ , where }  L_\infty=H_\infty \cap K_{\infty} . 
\end{equation}

Note that for each $\sigma\in \Sigma_\infty$ one has 
$
L_\sigma^\circ \simeq  \left(\begin{smallmatrix}\SO_{n}(\R) & 0 \\ 0 &\SO_{n}(\R) \end{smallmatrix}\right)\R^{\times\circ}$.
As in \eqref{eq:neat}, $\tilde{S}^H_L $ is a real manifold when $L$ is {\it sufficiently small} in the sense that for all $h\in H(\A)$,
\begin{equation}\label{eq:neat-bis}
H(\Q)\cap h LL_{\infty}^\circ  h^{-1}= Z(\Q)\cap  LL_{\infty}^\circ.
\end{equation}

Recall the notation  $t_{\gp}= \iota(\varpi_{\gp}\cdot{\bf1}_n,{\bf1}_n)$ where 
 $\varpi_{\gp}$ is an uniformizer at $\gp\mid p$. Recall also that for 
 $\beta=(\beta_{\gp})_{\gp\mid p}$ with  $\beta_\gp\in\Z_{\geqslant 0}$ we let   $p^\beta=\prod_{\gp\mid p}\varpi_{\gp}^{\beta_{\gp}}$
 and  $t_p^\beta  = \prod_{\gp\mid p} t_{\gp}^{\beta_{\gp}} \in G(\Q_p)$. 
 
 For any ideal $\gm$ of $\cO_F$, we denote by  $I(\gm)$  the open-compact subgroup  of $ \A_{F,f}^\times$ of modulus $\gm$, and  we consider the strict idele class group: 
$$
\Cl(\gm)= F^\times\backslash \A_F^\times/I(\gm) F_\infty^{\times\circ}.
$$

  We let $L_\beta = L^{(p)} \prod_{\gp\mid p}  L_\gp^{\beta_\gp}$  be an open compact subgroup of $H(\A_f)$ such that: 
\begin{enumerate}[label=(L\arabic*), ref=(L\arabic*)]
\item \label{condition-L1} $L^{(p)}=K^{(p)}\cap H$ is the principal congruence subgroup of modulus  $\gm$, and 
\smallskip
\item  \label{condition-L2}  $L_\gp^{\beta_\gp}=H(F_\gp)\cap K_\gp\cap\xi t_{\gp}^{\beta_\gp}K_\gp t_{\gp}^{-\beta_\gp}\xi^{-1}$
 for all $\gp \mid p$. 
\end{enumerate}

Note that conditions  \ref{condition-K1} and  \ref{condition-L1}  imply \eqref{eq:neat-bis}, in particular     $\SHbeta$ is a real manifold.

\begin{lemma} \label{lem:Lbeta}
$L_\gp^{\beta_\gp}$ consists of elements $(h_1 ,  h_2)\in \GL_n(\cO_{F,\gp})\times \GL_n(\cO_{F,\gp})$ such that  
   $$ \iota(h_1, h_2) \in K_\gp \cap \left( \begin{smallmatrix}{\bf 1}_n &  \\ & w_n \end{smallmatrix} \right)  K_\gp
  \left(  \begin{smallmatrix}{\bf 1}_n &  \\ & w_n \end{smallmatrix} \right),  
   \ \text{and} \ h_1h_2^{-1} \in 1+ \varpi_\gp^{\beta_\gp}M_n(\cO_{F,\gp}).   $$ 
    \end{lemma}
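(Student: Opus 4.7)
The plan is to test the two non-trivial constraints in the defining intersection
$$L_\gp^{\beta_\gp}=H(F_\gp)\cap K_\gp\cap\xi_\gp t_{\gp}^{\beta_\gp}K_\gp t_{\gp}^{-\beta_\gp}\xi_\gp^{-1}$$
against a generic element $\iota(h_1,h_2)$ of $H(F_\gp)$. Because condition \ref{condition-K2} gives $K_\gp\subseteq J_\gp\subseteq\GL_{2n}(\cO_{F,\gp})$, the membership $\iota(h_1,h_2)\in K_\gp$ already forces $h_1,h_2\in\GL_n(\cO_{F,\gp})$, so one can indeed restrict to pairs in $\GL_n(\cO_{F,\gp})\times\GL_n(\cO_{F,\gp})$ as in the statement.

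Next I would rewrite the second constraint as $t_\gp^{-\beta_\gp}\xi_\gp^{-1}\iota(h_1,h_2)\xi_\gp t_\gp^{\beta_\gp}\in K_\gp$ and compute the left-hand side explicitly: applying \eqref{eq:xi1} with $(A,B,C,D)=(h_1,0,0,h_2)$ and then using that conjugation by $t_\gp^{-\beta_\gp}$ multiplies the upper-right $n\times n$ block by $\varpi_\gp^{-\beta_\gp}$ and the lower-left block by $\varpi_\gp^{\beta_\gp}$, the matrix becomes
$$\begin{pmatrix}h_1 & \varpi_\gp^{-\beta_\gp}(h_1-h_2)w_n\\ {\bf 0}_n & w_nh_2w_n\end{pmatrix}.$$
Its lying in $K_\gp\subseteq\GL_{2n}(\cO_{F,\gp})$ forces integrality of the top-right block, i.e.\ $h_1-h_2\in\varpi_\gp^{\beta_\gp}M_n(\cO_{F,\gp})$; since $h_2\in\GL_n(\cO_{F,\gp})$, this is equivalent to $h_1h_2^{-1}\in {\bf 1}_n+\varpi_\gp^{\beta_\gp}M_n(\cO_{F,\gp})$, the second condition of the lemma.

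Granting this integrality, I would then factor the displayed matrix as the product
$$\begin{pmatrix}{\bf 1}_n & \varpi_\gp^{-\beta_\gp}(h_1-h_2)h_2^{-1}w_n\\ {\bf 0}_n & {\bf 1}_n\end{pmatrix}\cdot\left(\begin{smallmatrix}{\bf 1}_n & \\ & w_n\end{smallmatrix}\right)\iota(h_1,h_2)\left(\begin{smallmatrix}{\bf 1}_n & \\ & w_n\end{smallmatrix}\right),$$
a direct algebraic identity (using $w_n^2={\bf 1}_n$). The established integrality puts the top-right entry of the first factor in $M_n(\cO_{F,\gp})$, and by the lower inclusion in \ref{condition-K2} this first factor lies in $K_\gp$. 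Consequently the full matrix lies in $K_\gp$ if and only if the second factor does, which—using that $\left(\begin{smallmatrix}{\bf 1}_n & \\ & w_n\end{smallmatrix}\right)$ is an involution—amounts to $\iota(h_1,h_2)\in\left(\begin{smallmatrix}{\bf 1}_n & \\ & w_n\end{smallmatrix}\right)K_\gp\left(\begin{smallmatrix}{\bf 1}_n & \\ & w_n\end{smallmatrix}\right)$. Combining with $\iota(h_1,h_2)\in K_\gp$ reproduces exactly the intersection condition of the lemma.

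The argument is essentially matrix bookkeeping with \eqref{eq:xi1} and the evident factorization; the only conceptually load-bearing step—and hence the place I would watch most carefully—is the use of the lower inclusion in \ref{condition-K2} to absorb the upper-triangular unipotent prefactor into $K_\gp$, which is precisely what decouples the $\beta_\gp$-independent intersection from the $\beta_\gp$-dependent congruence. I do not anticipate a genuine obstacle.
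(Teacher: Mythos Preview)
Your proposal is correct and follows essentially the same route as the paper's proof: compute $t_\gp^{-\beta_\gp}\xi_\gp^{-1}\iota(h_1,h_2)\xi_\gp t_\gp^{\beta_\gp}$ via \eqref{eq:xi1}, read off the congruence $h_1-h_2\in\varpi_\gp^{\beta_\gp}M_n(\cO_{F,\gp})$ from integrality of the upper-right block, and then use the lower inclusion in \ref{condition-K2} to strip off the unipotent factor and reduce membership in $K_\gp$ to the condition $\iota(h_1,h_2^{w_n})\in K_\gp$. The paper states this more tersely (and only spells out one implication), but your explicit factorization and ``if and only if'' phrasing make the equivalence transparent; the content is the same.
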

    
\begin{proof}
By \eqref{eq:xi1} for all $(h_1,h_2) \in H(F_\gp)\cap K_\gp=\GL_n(\cO_{F,\gp})\times \GL_n(\cO_{F,\gp})$
one has  
$$t_{\gp}^{-\beta_{\gp}} \xi^{-1}
\left(\begin{smallmatrix}h_1 & \\ & h_2 \end{smallmatrix}\right)
 \xi t_{\gp}^{\beta_{\gp}}=\left( \begin{smallmatrix}h_1 & \varpi^{-\beta_{\gp}}(h_1 - h_2)w_n \\ & w_n h_2 w_n \end{smallmatrix} \right). $$
Hence $h_1- h_2 \in \varpi_\gp^{\beta_\gp}M_n(\cO_{F,\gp})$, and as
$\left(\begin{smallmatrix}{\bf1}_n&M_n(\cO_{F,\gp}) \\{\bf 0}_n&{\bf1}_n
\end{smallmatrix}\right)  \subseteq  K_\gp$ we obtain $(h_1,w_n h_2 w_n)\in K_\gp$.\end{proof}

Lemma \ref{lem:Lbeta} implies that the map $\left(1+ \varpi_\gp^{\beta_\gp}M_n(\cO_{F,\gp})\right)\times\cO_{F,\gp}^\times
\to \det(L_\gp^{\beta_\gp})$ sending $(x,y)$ to $(xy,y)$ is an isomorphism. 
By the strong approximation theorem for $\SL_{n}(\A_F)$  the map 
$$(h_1,h_2) \mapsto \left( \frac{\det(h_1)}{\det(h_2)}, \det(h_2) \right)
$$
  identifies  the set of connected components of $\SHbeta$ with a product of two idele class  groups:
\begin{equation}
\label{eqn:pi-0-SHL}
\pi_0(\SHbeta) \overset{\sim}{\longrightarrow} \Cl(p^\beta \gm) \times \Cl(\gm).
\end{equation}
  It is
easy to see that the  fibre $\SHbeta[\delta]$ of $[\delta]\in \pi_0(\SHbeta)$ is connected  of dimension 
 \begin{equation}
\label{eqn:q0}
t= |\Sigma_\infty|(n^2+n-1).
\end{equation}
If we consider a cohomology class on $\SGK$ in degree $t$, and pull it back to $\SHbeta[\delta]$, then we end up with a top-degree class. The degree $t$ happens to be the top-most degree with non-vanishing cuspidal cohomology of $\SGK$. This {\it magical numerology} is at the heart of what ultimately permits us to give a cohomological interpretation to an integral representing an $L$-value 
(see \cite{GR-ajm}) and allows us to study it's $p$-adic properties.

\subsection{Evaluation maps} Fix  $\mu \in X^*_0(T)$. 

\subsubsection{\bf Automorphic symbols} 
\label{sec:iota-beta}
 By   \ref{condition-L2}  the map
  \begin{equation}
  \label{eqn:iota-beta}
    \iota_\beta : \SHbeta \ \rightarrow \ \SGK, \quad \quad [h] \ \mapsto \ [\iota(h)\xi t_p^\beta], 
  \end{equation}
  is well-defined. Since  $\iota_\beta$ is proper by a  well-known result of Borel and Prasad 
  (see, for example, \cite[Lem. 2.7]{ash})  one can consider the  pull-back: 
  \begin{equation}
  \label{eqn:iota-beta*}
    \iota_\beta^* : \rH^q_c(\SGK, \cV_{\cO}^\mu) \longrightarrow \rH^q_c(\SHbeta, \iota_\beta^*\cV_{\cO}^\mu). 
  \end{equation}

\subsubsection{\bf Twisting}
\label{sec:tau-beta}
By \ref{condition-L1}  the map $\iota : \SHbeta \to \SGK$, $[h]\mapsto [\iota(h)]$ is  well-defined and 
proper. Since $\xi t_p^\beta\in \Lambda_p$, using the $\bullet$-action  from \eqref{eqn:two-actions} one can consider the map 
$$H(\A)\times V_{\cO}^\mu\to H(\A)\times V_{\cO}^\mu, \,\,(h,v)\mapsto (h, (\xi t_p^\beta)\bullet v)$$
inducing a homomorphism of sheaves 
$\tau_\beta^\circ : \iota_\beta^*\cV_{\cO}^\mu \longrightarrow  \iota^*\cV_{\cO}^\mu$
hence a map in cohomology 
  \begin{equation}
  \label{eqn:normalized-tau-beta-on-cohomology}
    \tau_\beta^\circ : \rH^q_c(\SHbeta, \iota_\beta^*\cV_{\cO}^\mu) \longrightarrow \rH^q_c(\SHbeta, \iota^*\cV_{\cO}^\mu).
  \end{equation}
Similarly using the natural action of  $G(E)$ on $V_{E}^\mu$ instead of the $\bullet$-action one  defines a map
 \begin{equation}
  \label{eqn:tau-beta-on-cohomology}
    \tau_\beta : \rH^q_c(\SHbeta, \iota_\beta^*\cV_{E}^\mu) \longrightarrow \rH^q_c(\SHbeta, \iota^*\cV_{E}^\mu), 
  \end{equation}
  and  $ \tau_\beta=\mu^\vee(t_p^{-\beta})\tau_\beta^\circ$, since by  \eqref{eqn:two-actions} one has
$  (\xi t_p^\beta) \bullet v= \mu^\vee(t_p^\beta) (\xi t_p^\beta)\cdot v$ for all $v\in V_{E}^\mu$.

\subsubsection{\bf Critical maps}
\label{sec:kappa-j}

   For $j_1,j_2 \in \Z$ let $V^{(j_1,j_2)}$ be the $1$-dimensional $H$-representation
  $$(h_1,h_2) \ \mapsto \ 
    \N_{F/\Q}(\det(h_1)^{j_1} \det(h_2)^{j_2}).  
  $$
  
Let   $V^{(j_1,j_2)}_{\cO}$ be a free rank one $\cO$-module on which the above defined natural  $H(\Z_p)$-action is 
extended to a $H(\Q_p)$-action by letting $p\in\Q_p^\times$ act trivially. Note that this action 
is similar to the $\Lambda_p$-action   on $V^\mu_{\cO}$ defined  in \S\ref{sec:int-lattices}.

  It follows from \cite[Prop. 6.3]{GR-ajm} that  $j \in \Crit(\mu)$ (see \eqref{eq:crit}) if and only if
  \begin{equation}
  \label{eqn:hom-H-one-dim}
    \dim\left(\Hom_H(V^\mu, V^{(j,\w-j)}) \right) = 1. 
  \end{equation} 
  Fix a non-zero $\kappa_j \in \Hom_H(V^\mu, V^{(j,\w-j)})$ normalized so as to get an integral map: 
  $$
    \kappa_j : V^\mu_{\cO} \ \rightarrow \ V^{(j,\w-j)}_{\cO}.
  $$
  Denoting  $\cV^{(j,\w-j)}_{\cO}$ the sheaf on  $\SHbeta$ attached to  $V^{(j,\w-j)}_{\cO}$ by the construction described in \S\ref{sec:local-systems}, one obtains a homomorphism:   
  \begin{equation}
  \label{eqn:kappa-j*}
    \kappa_j  : \rH^q_c(\SHbeta, \iota^*\cV_{\cO}^\mu)  \longrightarrow \rH^q_c(\SHbeta, \cV^{(j,\w-j)}_{\cO}). 
  \end{equation}

  Putting \eqref{eqn:iota-beta*}, \eqref{eqn:normalized-tau-beta-on-cohomology} and \eqref{eqn:kappa-j*} together, for or each $j \in \Crit(\mu)$, we get a map:
  \begin{equation}
  \label{eq:down-to-H}
    \kappa_j \circ \tau_\beta^\circ\circ \iota_\beta^* : \rH^q_c(\SGK, \cV_{\cO}^\mu) \longrightarrow \rH^q_c(\SHbeta, \cV^{(j,\w-j)}_{\cO}). 
  \end{equation}

\subsubsection{\bf Trivializations}
\label{sec:triv}
Given any $\delta\in H(\A_f)$ the map
$$\triv_\delta:H(\Q)\delta L_\beta H_\infty^\circ\times V^{(j,\w-j)}_{\cO}\to H(\Q)\delta L_\beta H_\infty^\circ\times  V^{(j,\w-j)}_{\cO}\, , \,
(\gamma\delta l h_\infty, v)\mapsto (\gamma\delta l h_\infty, l_p^{-1}\cdot v)
$$
is well-defined since  $H(\Q)\cap L_\beta H_\infty^\circ\subset \ker(\N_{F/\Q}\circ\det)$ acts trivially on $V^{(j,\w-j)}_{\cO}$. 
An easy check shows that $\triv_\delta$ induces a homomorphism of local systems 
$$\SHbeta[\delta] \times V^{(j,\w-j)}_{\cO}\to \left(\cV^{(j,\w-j)}_{\cO}\right)|_{\SHbeta[\delta]}$$
where $[\delta]$ denotes the image of $\delta$ in $\pi_0(\SHbeta)$, hence yields a homomorphism: 
$$\triv_\delta^*: \rH^q_c(\SHbeta[\delta], \cV^{(j,\w-j)}_{\cO}) \to 
\rH^q_c(\SHbeta[\delta], \Z)\otimes V^{(j,\w-j)}_{\cO}.$$

We will now render the trivializations independent of the  choice of $\delta\in[\delta]\in\pi_0(\SHbeta)$.
By definition, for any $\delta'\in H(\Q) \delta l H_\infty$ one has 
\begin{equation}\label{eq:dependence-delta}
\triv_{\delta'}^*= (\id\otimes l_p^{-1})\cdot \triv_{\delta}^*=\N_{F_p/\Q_p}^{-1} \left(\det(l_{1,p})^j\det(l_{2,p})^{\w-j}\right)\triv_{\delta}^*. 
\end{equation}
The $p$-adic cyclotomic character $\varepsilon $ seen as  idele class character 
$F^{\times} \backslash \A_F^{\times} \to \Z_p^\times$ sends $y$ to $ \N_{F_p/\Q_p}(y_p)|y_f|_F\prod\limits_{\sigma\in \Sigma_\infty} 
{\rm sgn}(y_\sigma)$,  is trivial on 
$F_\infty^{\times\circ}$ and given by $\N_{F_p/\Q_p}$ on $(\cO_{F}\otimes\Z_p)^\times$. Hence
\begin{equation}\label{eq:triv-delta}
\triv_{[\delta]}^*= \varepsilon \!\left(\det(\delta_{1}^j\delta_{2}^{\w-j})\right)
\triv_\delta^*  : \rH^q_c(\SHbeta[\delta], \cV^{(j,\w-j)}_{\cO}) \to 
\rH^q_c(\SHbeta[\delta], \Z)\otimes V^{(j,\w-j)}_{\cO}
\end{equation}
is independent of the particular choice of $\delta\in[\delta]\in\pi_0(\SHbeta)$.

\subsubsection{\bf Connected components and fundamental classes}
\label{sec:degree-zero-class}
Recall that for each $[\delta]\in\pi_0(\SHbeta)$, $\SHbeta[\delta]$ is a $t$-dimensional connected orientable real manifold and that choosing an orientation
amounts to choosing a fundamental class, {\it i.e.}, a basis  $\theta_{[\delta]}$ of its Borel-Moore homology $H_{t}^{\mathrm{BM}}(\SHbeta[\delta])\simeq \Z$. We choose such orientations in a consistent manner  when $\beta$ and $[\delta]$ vary  as follows. First, we fix, once and for all, an ordered basis on the tangent space of the symmetric space $H_\infty^\circ/L_\infty^\circ$ yielding fundamental classes $\theta_{\beta}$ of the connected 
components of identity $\SHbeta[1]$, when $\beta$ varies. Then for each  $[\delta]\in\pi_0(\SHbeta)$ we consider the isomorphism 
$\SHbeta[1]\underset{\sim}{\overset{\cdot\delta}{\longrightarrow}} \SHbeta[\delta]$ 
and define $\theta_{[\delta]}=\delta_*\theta_{\beta} $, which is clearly independent of the particular choice of $\delta\in [\delta]$. 
Capping with  $\theta_{[\delta]}$ and fixing a  basis of $V^{(j,\w-j)}_{\cO}$ (later in \eqref{eq:basis-j-line} 
we will fix a particular basis in order to compare evaluations at different $j$'s)
yields an isomorphism:
$$\rH^{t}_c(\SHbeta[\delta], \Z)\otimes V^{(j,\w-j)}_{\cO} \overset{\sim}{\longrightarrow} V^{(j,\w-j)}_{\cO}\overset{\sim}{\longrightarrow} \cO.$$ 
Combining this with  \eqref{eq:down-to-H} and \eqref{eq:triv-delta} gives homomorphisms:  
\begin{align}
\cE^{j,\w}_{\beta,\delta}=&(\relbar\cap \theta_{[\delta]}) \circ\triv_\delta^*\circ\kappa_j \circ \tau_\beta^\circ\circ \iota_\beta^*:  \rH^{t}_c(\SGK, \cV_{\cO}^\mu)\to \cO, \label{eq:eval} \\ 
\cE^{j,\w}_{\beta, [\delta]}= \varepsilon \!\left(\det(\delta_{1}^j\delta_{2}^{\w-j})\right) \cdot \cE^{j,\w}_{\beta,\delta}=&
(\relbar\cap \theta_{[\delta]}) \circ \triv_{[\delta]}^* \circ\kappa_j \circ \tau_\beta^\circ\circ \iota_\beta^* :  \rH^{t}_c(\SGK, \cV_{\cO}^\mu)\to \cO. \nonumber
\end{align}

\subsubsection{\bf Summing over the second component}
Consider a finite order $\cO$-valued idele class  character $\eta_0$ of $F$ which is trivial on  $I(\gm)$, in particular  unramified at all places above $p$.    The character $\eta=\eta_0|\cdot|_F^{-\w}$ will later play a role when we discuss Shalika models for 
  automorphic   representations of $G$. The following map provides a section of \eqref{eqn:pi-0-SHL}: 
\begin{equation}
\label{eqn:dxy}
\delta(x,y)  =  (\diag(xy, 1,\dots, 1), \diag(y, 1,\dots ,1)) \in H.  
\end{equation}
When $(x,y)\in (\A_F^\times)^2$ runs over a set of representatives of $\Cl(p^\beta\gm)\times \Cl(\gm)$, 
$\SHbeta[\delta(x,y)]$ runs over the set of connected components of $\SHbeta.$ 
Define the level $\beta$ evaluation: 
 \begin{equation}
\label{eqn:summing-2nd-var}
\cE^{j,\eta}_{\beta}= \sum_{[\bar x] \in \Cl(p^\beta)} \cE^{j,\eta}_{\beta,[\bar x]}[\bar x] \text{, where }  
\cE^{j,\eta}_{\beta,[\bar x]}=\sum_{[y] \in \Cl(\gm)} \sum_{[x]}  \eta_0([y]) \cE^{j,\w}_{\beta,[\delta( x,y)]},
\end{equation}
where the last sum runs over all $[x] \in \Cl(p^\beta\gm)$ mapping to $[\bar x]$ under the natural projection. 

The following diagram recapitulates the steps in the construction of $\cE^{j,\eta}_{\beta}$: 
  \begin{equation}
  \label{eqn:evaluation-map}
    \xymatrix{
  \rH^{t}_c(\SGK, \cV_{\cO}^\mu) \ar[rrr]^{ \kappa_j \circ \tau_\beta^\circ\circ \iota_\beta^*} \ar[dd]_{\cE^{j,\eta}_{\beta}} &&&
        \rH^{t}_c(\SHbeta, \cV^{(j,\w-j)}_{\cO})
           \ar[dd]_{\underset{[\delta]\in\pi_0(\SHbeta) }{\sum} (\relbar\cap \theta_{[\delta]}) \circ \triv_{[\delta]}^*}
\\
&&& \\
\cO[\Cl(p^\beta)]  && \cO[\Cl(p^\beta\gm)\times \Cl(\gm)]  \ar[ll]_-{ [(x,y)]\mapsto  \eta_0([y])[\bar x]} &  \cO[\pi_0(\SHbeta)]  
 \ar@{=}[l] 
    }
  \end{equation}

\subsection{Distributions on $\Cl(p^\infty)$}
  
The object of this section is to relate  when  $\beta$ varies the evaluation maps $\cE^{j,\eta}_{\beta}$ whose definition is summarized in \eqref{eqn:evaluation-map}. 

\subsubsection{\bf The distributive property}
\label{sec:distributionrelation}
Fix a  $\beta=(\beta_{\gp})_{\gp\mid p}$ with $\beta_{\gp}\in \Z_{>0}$ for all $\gp\mid p$.

\begin{theorem}
\label{thm:distribution}
Given a   prime  $\gp\mid p$ we let  $p^{\beta'}=p^\beta\gp$ and consider the canonical projection
 $\pr_{\beta',\beta} : \Cl(p^{\beta'}) \to \Cl(p^\beta)$. For all $ [x] \in \Cl(p^\beta)$ we have 
 $\cE^{j,\eta}_{\beta}\circ U_\gp^\circ=\pr_{\beta',\beta}\circ \cE^{j,\eta}_{\beta'},$ {\it i.e.}, 
   $$    
   \cE^{j,\eta}_{\beta,[x]}\circ U_\gp^\circ
   \ = \ 
   \sum_{[x'] \, \in \, \pr_{\beta',\beta}^{-1}([x])} \cE^{j,\eta}_{\beta',[x']}. 
   $$
\end{theorem}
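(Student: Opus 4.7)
The plan is to prove the distribution relation via an explicit coset decomposition of $U_\gp^\circ$, followed by a termwise analysis of the composite maps in diagram~\eqref{eqn:evaluation-map} showing that the resulting contributions regroup exactly as the sum over the fibers of $\pr_{\beta',\beta}$.

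\textbf{Coset decomposition and transport to $\tilde{S}^H_{L_{\beta'}}$.} By the parahoric decomposition~\eqref{eqn:parahoric-decomposition} and the contraction property~\eqref{eqn:adjoint-t-action}, I would first write
\[ K\, t_\gp\, K \;=\; \bigsqcup_{u}\, u\, t_\gp\, K \]
with representatives $u \in U^-(\Z_p)\cap K_\gp$ modulo $t_\gp(U^-(\Z_p)\cap K_\gp)t_\gp^{-1}$, giving $U_\gp^\circ\,\phi \;=\; \mu^\vee(t_\gp)\sum_u [u t_\gp]\phi$. For each representative $u$, the composite $\iota_\beta^*\circ[ut_\gp]$ corresponds to right translation by $\xi t_p^\beta\cdot (ut_\gp)^{-1}$ on $H(\A)$-orbits. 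Using the identity $t_p^{\beta'}=t_p^{\beta}\cdot t_\gp$ together with the conjugation formula~\eqref{eq:xi2} to write $\xi u^{-1}\xi^{-1}=\iota(h'_u)\cdot v_u$ with $h'_u\in H(\Q_p)$ and $v_u\in U(\Q_p)$ absorbable (after conjugation by $\xi t_p^{\beta'}$) into the defining condition~\ref{condition-L2} of $L_{\beta'}$, one obtains the factorization $\iota_\beta^*\circ[ut_\gp] = [h'_u]\circ \iota_{\beta'}^*$ on cohomology, where $[h'_u]$ denotes right translation by $h'_u$ on $\tilde{S}^H_{L_{\beta'}}$.

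\textbf{Matching components and compatibility of normalizations.} Under the identification~\eqref{eqn:pi-0-SHL}, right translation by $h'_u$ twists the first factor of $\pi_0(\tilde{S}^H_{L_{\beta'}}) \cong \Cl(p^{\beta'}\gm)\times \Cl(\gm)$ by $\det(h'_u)$ and leaves the second factor untouched, so $\eta_0$ passes through transparently. The combinatorial heart of the proof is that, as $u$ ranges over its coset representatives, the classes $\det(h'_u)\cdot[x]$ exhaust the fiber $\pr_{\beta',\beta}^{-1}([x]) \subset \Cl(p^{\beta'}\gm)$, with redundant contributions collapsing under the right $L_{\beta'}$-invariance built into $\cE^{j,\eta}_{\beta',\cdot}$. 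Simultaneously, the prefactor $\mu^\vee(t_\gp)$ in $U_\gp^\circ$ is exactly the discrepancy, via~\eqref{eqn:two-actions}, between the $\bullet$-action of $\xi t_p^\beta$ appearing in $\tau_\beta^\circ$ and that of $\xi t_p^{\beta'}$ appearing in $\tau_{\beta'}^\circ$. The $H$-equivariance of $\kappa_j$ converts the translation by $h'_u$ into multiplication by the scalar $\varepsilon(\det(h'_{u,1})^j\det(h'_{u,2})^{\w-j})$, which exactly matches the shift between trivializations prescribed by~\eqref{eq:triv-delta} at the twisted components, while the translation-equivariance of the orientations $\theta_{[\delta]}$ fixed in~\S\ref{sec:degree-zero-class} ensures that cap products combine consistently.

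\textbf{Conclusion and main obstacle.} Assembling the above yields the identity $\cE^{j,\eta}_{\beta,[x]}\circ U_\gp^\circ = \sum_{[x']\in\pr_{\beta',\beta}^{-1}([x])}\cE^{j,\eta}_{\beta',[x']}$. The main obstacle is the combinatorial matching: explicitly reading off $h'_u=(\xi u^{-1}\xi^{-1})_H$ from~\eqref{eq:xi2} and verifying that the induced map from the coset representatives $u$ to the fiber $\pr_{\beta',\beta}^{-1}([x])$, taken modulo the right $L_{\beta'}$-action on $\tilde{S}^H_{L_{\beta'}}$, is a bijection whose multiplicities cancel cleanly between the coset count $[K:K\cap t_\gp^{-1}K t_\gp]$ and the degree of the cover $\tilde{S}^H_{L_{\beta'}} \to \SHbeta$. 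This is where the geometric content of the distribution relation resides; everything else is functorial bookkeeping with the identities already recorded in the excerpt.
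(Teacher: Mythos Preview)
Your approach differs substantially from the paper's and contains a genuine gap at the step you call the ``combinatorial heart.''

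The paper does \emph{not} use an explicit coset decomposition. Instead it unfolds the definition of $U_\gp^\circ$ as $\mathrm{Tr}(p_{K^0(\gp),K})\circ[t_\gp]\circ p_{K_0(\gp),K}^*$ and builds a commutative diagram in which each of these three maps is intertwined, via the automorphic symbols $\iota_\beta^*,\iota_{\beta'}^*$ and the twists $\tau_\beta^\circ,\tau_{\beta'}^\circ$, with the corresponding map on the $H$-side. The key geometric input is that $\iota_\beta$ (the same formula $h\mapsto h\xi t_p^\beta$, but applied at level $L_{\beta'}$) lands in $S^G_{K^0(\gp)}$, and that the covering degrees match: $[K:K^0(\gp)]=[L_\beta:L_{\beta'}]=|M_n(\cO_F/\gp)|$. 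The rightmost square then identifies the trace on the $G$-side with $\mathrm{Tr}(\pr_{\beta',\beta})$ on the $H$-side, which is exactly the sum over the fiber. A second small diagram handles the compatibility of trivializations, producing the factor $\N_{F_p/\Q_p}^j(u_{x'})$.

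Your explicit-coset route breaks down at the decomposition $\xi u^{-1}\xi^{-1}=\iota(h'_u)\cdot v_u$. First, the correct left-coset representatives for $Kt_\gp K$ lie in $U(\Z_p)$, not $U^-(\Z_p)$ (see the proof of Lemma~\ref{lem:shalika-eigenvector}); the quotient you write down is in the wrong direction since $t_\gp(U^-\cap J_\gp)t_\gp^{-1}\supseteq U^-\cap J_\gp$. More seriously, the decomposition fails outright: for $u=\left(\begin{smallmatrix}{\bf1}_n&m\\0&{\bf1}_n\end{smallmatrix}\right)\in U$, formula~\eqref{eq:xi2} gives $\xi u\xi^{-1}=\left(\begin{smallmatrix}{\bf1}_n&mw_n\\0&{\bf1}_n\end{smallmatrix}\right)\in U$, so $h'_u={\bf1}$ for every $u$ and there is no $H$-translation to speak of. If instead $u\in U^-$, then by~\eqref{eq:xi2} the lower-left block of $\xi u^{-1}\xi^{-1}$ is $-w_nc\neq 0$, so the element does not lie in $H\cdot U$ at all. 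Either way, the mechanism by which you propose to generate the fiber $\pr_{\beta',\beta}^{-1}([x])$ via $\det(h'_u)$ does not exist.

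The moral is that the fiber on the $H$-side does not arise from translating $h$ by elements coming from the $G$-coset representatives; it arises from the trace over the covering $\tilde S^H_{L_{\beta'}}\to\tilde S^H_{L_\beta}$, whose degree happens to match the Hecke-operator degree. You should replace the coset computation with the correspondence/trace argument.
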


\begin{proof} Using  \eqref{eq:eval}, \eqref{eqn:dxy} and \eqref{eqn:summing-2nd-var} one has to show that 
for all $[x] \in \Cl(p^\beta\gm), [y]\in \Cl(\gm)$:
$$ \cE^{j,\w}_{\beta,\delta(x,y)}\circ U_\gp^\circ
   = \sum_{[x'] \in \pr_{\beta',\beta}^{-1}([x])}   \N_{F_p/\Q_p}^j(u_{x'})\cdot   \cE^{j,\w}_{\beta',\delta(x',y)}, \text{ where}  $$
 $u_{x'}\in I(p^\beta)$ is such that $x'\in F^\times x u_{x'} F_\infty^{\times\circ}$. 
We proceed as in the proof of \cite[Prop. 3.4]{BDJ}.
Pulling back the definition of the Hecke operator $U_\gp^\circ$ (see \S\ref{sec:hecke})  by the automorphic symbols
(see \S\ref{sec:iota-beta}) and the twisting operators (see \S\ref{sec:tau-beta})  yields a commutative diagram
(we use implicitly that $p_{K^0(\gp),K}$ and $\pr_{\beta',\beta}$ have the same degree as 
$L_{\beta}/L_{\beta'}\simeq M_n(\cO/\gp)$): 
  $$\hspace{-5mm}
  \xymatrix{ \rH_c^{t}(\SGK,\cV^\mu_K) \ar[r]^{\!\!\!\! p_{K_0(\gp),K}^*}  \ar[d]_{\iota_{\beta'}^*} &   \rH_c^{t}(S^G_{K_0(\gp)},\cV^\mu_{K_0(\gp)})  \ar[d]_{\iota_{\beta'}^*}\ar[r]^{[t_\gp]}    &\rH_c^{t}(S^G_{K^0(\gp)},\cV^\mu_{K^0(\gp)})  \ar[rr]^{\mathrm{Tr}(p_{K^0(\gp),K})} \ar[d]_{\iota_{\beta}^*}  & &  \rH_c^{t}(\SGK,\cV^\mu_K)\ar[d]_{\iota_{\beta}^*}
\\ \rH^{t}_c(\tilde  S^H_{L_{\beta'}}, \iota_{\beta'}^*\cV^\mu_K) \ar@{=}[r] \ar[d]_{\tau_{\beta'}^\circ}& \rH^{t}_c(\tilde  S^H_{L_{\beta'}}, \iota_{\beta'}^*\cV^\mu_{K_0(\gp)})
 \ar[r]^{[t_\gp]}  \ar[d]_{\tau_{\beta'}^\circ} & 
 \rH^{t}_c(\tilde S^H_{L_{\beta'}}, \iota_{\beta}^*\cV^\mu_{K^0(\gp)}) \ar[rr]^{\mathrm{Tr}\left(\pr_{\beta',\beta} \right)}\ar[d]_{\tau_{\beta}^\circ}
 && \rH^{t}_c(\SHbeta, \iota_{\beta}^*\cV^\mu_K) \ar[d]_{\tau_{\beta}^\circ}
 \\ \rH^{t}_c(\tilde  S^H_{L_{\beta'}}, \iota^*\cV^\mu_K) \ar@{=}[r]  & \rH^{t}_c(\tilde  S^H_{L_{\beta'}}, \iota^*\cV^\mu_{K_0(\gp)})
 \ar@{=}[r]  & 
 \rH^{t}_c(\tilde S^H_{L_{\beta'}}, \iota^*\cV^\mu_{K^0(\gp)}) \ar[rr]^{\mathrm{Tr}(\pr_{\beta',\beta} )}
 && \rH^{t}_c(\SHbeta, \iota^*\cV^\mu_K), }   $$
  where the upper  $[t_\gp]$ is induced by the morphism $(g,v)\mapsto (g\cdot t_\gp^{-1}, t_\gp\bullet v)$ of local systems, 
whereas the   lower $[t_\gp]$ is induced by the morphism  $(h,v)\mapsto (h, t_\gp\bullet v)$. Then
$$\xymatrix{ \rH^{t}_c(\tilde S^H_{L_{\beta'}}[\delta(x',y)],\cV^{(j,\w-j)}_{\cO} ) \ar[rr]^{\mathrm{Tr}\left(\pr_{\beta',\beta} \right)}
\ar[d]_{\triv^*_{\delta(x',y)}}
 && \rH^{t}_c(\SHbeta[\delta(x,y)], \cV^{(j,\w-j)}_{\cO}) \ar[d]_{\triv^*_{\delta(x,y)}}\\
 \rH^{t}_c(\SHbeta[\delta(x',y)], \Z)\otimes V^{(j,\w-j)}_{\cO} \ar[rr]^{\cdot  \N_{F_p/\Q_p}^j(u_{x'})}
 &&\rH^{t}_c(\SHbeta[\delta(x,y)], \Z)\otimes V^{(j,\w-j)}_{\cO},  } $$
is  another  commutative diagram by  \eqref{eq:dependence-delta}, hence the claim. 
\end{proof}

\subsubsection{\bf Distributions for finite slope eigenvectors}
 
  Let $\phi\in \rH^{t}_c(\SGK, \cV_{\cO}^\mu)$ be an eigenvector for   $U_{\gp}^\circ$ with 
  eigenvalue $\alpha_\gp^\circ$ for all  $\gp\mid p$.    Then, for  all $\beta=(\beta_\gp)_{\gp\mid p}$, it  is an eigenvector for  $U_{p^\beta}^\circ$  with eigenvalue
  $\alpha_{p^\beta}^\circ=\prod_{\gp\mid p}(\alpha_\gp^\circ)^{\beta_\gp}$. 
  We say that $\phi$ is of {\it finite slope}  if $\alpha_{p}^\circ \neq 0$ and in which case we define its slope as $v_p(\alpha_p^\circ)$. 
  A eigenvector $\phi$ of slope $0$  is called $Q$-ordinary. 
Being $Q$-ordinary  is equivalent to saying that the $U_{\gp}$-eigenvalue $\alpha_\gp$ satisfies  
$|\alpha_\gp|_p = |\mu^\vee(t_\gp)|_p^{-1}$ for all $\gp\mid p$ (see  \S\ref{sec:local-aspects-ordinary-zeta} for more details). 
 
 Given any $U_p$-eigenvector $\phi$ of finite slope and any $j\in  \Crit(\mu)$ by Theorem \ref{thm:distribution}   one has a well-defined element 
\begin{equation}
\label{eq:definitionofmu-fs}
    \mmu^{j,\eta}_\phi=\left((\alpha_{p^\beta}^\circ)^{-1} \cE^{j,\eta}_{\beta}(\phi)\right)_\beta
\end{equation}
which is thought of  as an $E$-valued distribution  on $\Cl(p^\infty)$.

       We write $\rH^{t}_c(\SGK, \cV_{\cO}^\mu)^{Q-\mathrm{ord}}$ for the maximal  $\cO$-submodule of $\rH^{t}_c(\SGK, \cV_{\cO}^\mu)$ on which the operators $U_{\gp}^\circ$ are invertible for all $\gp\mid p$ (it is a direct $\cO$-factor). Given any (not necessarily $U_p^\circ$-eigen) non-torsion element 
 $\phi\in \rH^{t}_c(\SGK, \cV_{\cO}^\mu)^{Q-\mathrm{ord}}$ one defines 
\begin{equation}
\label{eq:definitionofmu}
    \mmu^{j,\eta}_\phi=\left(\cE^{j,\eta}_{\beta}((U_{p^\beta}^\circ)^{-1}(\phi))\right)_\beta
 \in \cO[[\Cl(p^\infty)]]=\varprojlim_{\beta}\cO[\Cl(p^\beta)],  
\end{equation}
which can be reinterpreted as a measure ({\it i.e.}, a bounded distribution) on $\Cl(p^\infty)$.

\subsection{Manin relations}
\label{sec:manin}
    
  Consider the  $p$-adic cyclotomic character $\varepsilon : \Cl(p^\infty)  \to  \Z_p^\times$ which is defined by composing
    the norm  $\N_{F/\Q}:  \Cl(p^\infty)\to  \mathcal{C}\ell_\Q^+(p^\infty)$ with the  $p$-adic cyclotomic character 
    over $\Q$.     In this section we will prove the following result.

  \begin{theorem}
  \label{thm:manin}
    Let $\mu \in X^*_0(T)$ and suppose that $j$ and  $j+1$ both belong to   $\Crit(\mu)$. 

    For $\phi\in \rH^{t}_c(\SGK, \cV_{\cO}^\mu)^{Q-\mathrm{ord}}$ 
      the following  equality holds in $\cO[[\Cl(p^\infty)]]$:   
     $$ \varepsilon_{\cyc}(\mmu^{j,\eta}_\phi)  =       \mmu^{j+1,\eta}_\phi,     $$
    where  $\varepsilon_{\cyc}$ denotes  the automorphism of
    $\cO[[\Cl(p^\infty)]]$ sending  $[x]$ to $\varepsilon ([x])[x]$.  Hence 
     \begin{equation}\label{eq:def-mu-pi}
    \mmu_\phi^\eta=\varepsilon_{\cyc}^{-j}(\mmu^{j,\eta}_\phi)\in \cO[[\Cl(p^\infty)]],
  \end{equation}  
    is independent of $j\in \Crit(\mu)$. 
      \end{theorem}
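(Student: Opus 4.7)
The plan is to reduce the claimed identity in $\cO[[\Cl(p^\infty)]]$ to a local statement on $V^\mu_\cO$ that compares $\kappa_{j+1}$ and $\kappa_j$ after precomposition with the $\bullet$-action of $\xi t_p^\beta$, and then to invoke the $Q$-ordinariness of $\phi$ to discard the ensuing error terms when passing to the inverse limit over $\beta$. Concretely, by \eqref{eqn:summing-2nd-var} and \eqref{eq:triv-delta}, for $\delta=\delta(x,y)$ each summand factors as
\[
\cE^{j,\w}_{\beta,[\delta(x,y)]}(\phi) \;=\; \varepsilon\!\bigl(\det(\delta(x,y)_1)\bigr)^{j}\,\varepsilon\!\bigl(\det(\delta(x,y)_2)\bigr)^{\w-j}\,\cE^{j,\w}_{\beta,\delta(x,y)}(\phi),
\]
and incrementing $j$ multiplies the trivialization factor by $\varepsilon(\det(\delta_1)\det(\delta_2)^{-1})=\varepsilon(x)$, which is exactly what produces the $\varepsilon_\cyc$-twist in the limit. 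Thus the theorem is reduced, after choosing a compatible system of $\cO$-bases of $V^{(j,\w-j)}_\cO$ as $j$ varies in $\Crit(\mu)$ (the basis announced by \eqref{eq:basis-j-line}), to showing that the cohomological factor $\cE^{j,\w}_{\beta,\delta(x,y)}\bigl((U_{p^\beta}^\circ)^{-1}\phi\bigr)$ becomes independent of $j$ in the inverse limit over $\beta$.

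The heart of the argument is then a purely local identity in $V^\mu_\cO$:
\[
\kappa_{j+1}\bigl((\xi t_p^\beta)\bullet v\bigr) \;=\; \kappa_{j}\bigl((\xi t_p^\beta)\bullet v\bigr) \;+\; R_{j,\beta}(v), \qquad v\in V^\mu_\cO,
\]
where $R_{j,\beta}(v)$ is a combination of terms of the form $\kappa_{j'}\bigl((\xi t_p^\beta)\bullet(X\cdot v')\bigr)$ with $X$ a root vector outside the Levi $\lieh$ and $v'\in V^\mu_\cO$. To establish this I would describe elements of $V^\mu_\cO=\cU(\lien_\cO)v_0$ (see \S\ref{sec:int-lattices}) via PBW monomials adapted to the Levi decomposition $\lieg=\lieu^{-}\oplus\lieh\oplus\lieu$, locate the unique one-dimensional $H$-isotypic lines of types $(j,\w-j)$ and $(j+1,\w-j-1)$ inside $V^\mu$, and connect them by a specific product of root vectors; the terms that the bridge fails to account for are exactly those which constitute $R_{j,\beta}$. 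This is in the spirit of the Manin-type relations already proved in \cite{dim-padicL, BDJ}.

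With the local identity in hand, $Q$-ordinariness of $\phi$ finishes the argument. By the contraction property \eqref{eqn:adjoint-t-action} of $\Ad(t_p^\beta)$ on the root spaces outside $\lieh$, the vectors $X\cdot v'$ figuring in $R_{j,\beta}$ are sent, under the $\bullet$-action of $\xi t_p^\beta$, into $p^{c_\beta}V^\mu_\cO$ with $c_\beta\to\infty$ as $\beta\to\infty$. Since $(U_{p^\beta}^\circ)^{-1}\phi$ has bounded $p$-adic valuation uniformly in $\beta$ by $Q$-ordinariness, the contribution of $R_{j,\beta}$ vanishes in the projective limit, so $\varepsilon_\cyc(\mmu^{j,\eta}_\phi)=\mmu^{j+1,\eta}_\phi$; iterating over consecutive critical integers makes $\mmu^\eta_\phi=\varepsilon_\cyc^{-j}(\mmu^{j,\eta}_\phi)$ independent of $j\in\Crit(\mu)$.

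The main obstacle is the local identity above: it requires fixing an integral, manifestly compatible system of $\cO$-bases of the one-dimensional spaces $V^{(j,\w-j)}_\cO$ as $j$ ranges over $\Crit(\mu)$, writing the branching projections $\kappa_j$ explicitly in terms of a Chevalley-type basis of $\lieg$, and pinning down $R_{j,\beta}$ as living in the $\Ad(t_p^\beta)$-contracting part of $V^\mu_\cO$. The trivialization and summation steps are formal, and once this combinatorial/Lie-algebraic step is complete the $Q$-ordinary projection disposes of the error essentially automatically.
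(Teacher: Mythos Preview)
Your overall strategy matches the paper's: reduce to the congruence
\[
\cE^{j,\w}_{\beta,\delta}(\phi)\equiv\cE^{j+1,\w}_{\beta,\delta}(\phi)\pmod{p^\beta}
\]
by peeling off the cyclotomic factor from $\triv_{[\delta]}^*$, and then deduce this from a purely local statement on $V^\mu_\cO$. Your identification of the trivialization step and the role of the basis choice \eqref{eq:basis-j-line} is correct.

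Where you diverge from the paper is in the local argument. You propose to compare the $H$-isotypic lines of types $(j,\w-j)$ and $(j+1,\w-j-1)$ directly, by writing down a bridge of root vectors between them and tracking an error term $R_{j,\beta}$ built out of root vectors outside $\lieh$, then killing $R_{j,\beta}$ via the contraction $\Ad(t_p^\beta)\lieu_\cO=p^\beta\lieu_\cO$. This is plausible but, as you concede, the combinatorics are not carried out, and one has to be careful: $\Ad(t_p^\beta)$ contracts $\lieu$ but \emph{expands} $\lieu^-$, so ``root vectors outside $\lieh$'' is too coarse a description of where the error lives.

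The paper avoids this branching analysis entirely by exploiting the $\xi$-twist more structurally. The key observation is the transversality $\lieg_\cO=\lieh_\cO+{}^\xi\lieb_\cO^-$ (Proposition~\ref{prop:transversality}(i)), which implies $V^\mu_\cO=\cU(\lieh_\cO)\cdot\xi v_0$ and hence that $\kappa_j(\xi v_0)$ is an $\cO$-basis of $V^{(j,\w-j)}_\cO$ for \emph{every} $j$ simultaneously; this is precisely the compatible basis system you were looking for. The second ingredient is the inclusion
\[
\cU({}^{\xi t_p^\beta}\lien_\cO)\subseteq\cU([\lieh,\lieh]_\cO+p^\beta\lieh_\cO)\cdot\cU({}^\xi\lien_\cO^-+p^\beta\,{}^\xi\lieb_\cO^-)
\]
(Corollary~\ref{cor:productrelations}(ii)). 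Writing $(\xi t_p^\beta)\bullet v={}^{\xi t_p^\beta}m\cdot\xi v_0$ with $m\in\cU(\lien_\cO)$, one factors ${}^{\xi t_p^\beta}m=xy$ accordingly; since ${}^\xi\lien_\cO^-$ kills $\xi v_0$ and $[\lieh,\lieh]_\cO$ kills any one-dimensional $H$-module, one computes $\kappa_j^\circ(v)\equiv x_0y_0\pmod{p^\beta}$, the product of degree-zero terms, which is visibly independent of $j$. This is Proposition~\ref{prop:manin-technical}, and it replaces your undetermined $R_{j,\beta}$ with an explicit congruence obtained in two lines. Your route could presumably be completed, but the paper's transversality argument is both shorter and sidesteps the explicit branching combinatorics you flagged as the main obstacle.
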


The overdetermination of $\mmu^{\eta}_\phi$ in the $Q$-ordinary case, when there are at least two critical values,   
    will play a pivotal role in the proof of main theorem. Before embarking on the proof of this theorem, we begin with some technical preparation (see  \cite[\S3]{J-preprint-2}).

\subsubsection{\bf Lie theoretic considerations}

By the distributive property (see Theorem \ref{thm:distribution}) we may reduce to strict $p^\beta$-power level with integral exponents $\beta\in\Z_{>0},$ ignoring the finer components $\gp\mid p$ for simplicity of notation.  Recall  that $\lieb=\liet\oplus\lien$ and $  \lieq=\lieh\oplus\lieu$. With  the notation  $t_p=\iota(p{\bf1}_n,{\bf1}_n)$, 
we observe for any $\beta\geqslant 0$ the relations
$$
    t_p^\beta\lien_\cO t_p^{-\beta}\subseteq \lien_\cO, \quad 
    t_p^\beta\lieu_\cO t_p^{-\beta}=p^\beta\lieu_\cO.
$$
  Recall the matrix
  $ \xi = \left(\begin{smallmatrix}
    {\bf1}_n&w_n\\
    {\bf0}_n&w_n
    \end{smallmatrix}\right).
  $
  A superscript ${}^\xi(-)$ denotes left conjugation action by $\xi$.

\begin{prop}
\label{prop:transversality}
 We have the relations
 \begin{enumerate}
   \item  $   \lieg_\cO=\lieh_\cO+ {}^\xi\,\lieb_\cO^-$, and  
 \item    $ {}^\xi\!\left(\lien_\cO\cap\lieh_\cO\right)    \subseteq 
        [\lieh,\lieh]_\cO+        {}^\xi\,\lien_\cO^-$.
  \end{enumerate}
\end{prop}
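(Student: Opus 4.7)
The plan is to reduce both assertions to explicit $2\times 2$ block matrix manipulations via the conjugation formulas \eqref{eq:xi1} and \eqref{eq:xi2}. Since $\det w_n = \pm 1$ we have $\xi \in G(\cO_F)$, so conjugation by $\xi$ restricts to an $\cO$-linear automorphism of $\lieg_\cO$; one may freely pass between the two sides without introducing denominators.

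For (i), I would conjugate by $\xi^{-1}$ to reduce to the equivalent statement $\lieg_\cO = \xi^{-1}\lieh_\cO\xi + \lieb^-_\cO$. Given $X = \left(\begin{smallmatrix} A & B \\ C & D \end{smallmatrix}\right) \in \lieg_\cO$, the goal is to find integral $h_1, h_2 \in \mathfrak{gl}_n$ with $X - \xi^{-1}\iota(h_1,h_2)\xi \in \lieb^-_\cO$. By \eqref{eq:xi1} the subtracted term equals $\left(\begin{smallmatrix} h_1 & (h_1-h_2)w_n \\ 0 & h_2^{w_n} \end{smallmatrix}\right)$, so the conditions become: the strictly upper triangular parts of $h_1$ and of $h_2^{w_n}$ match those of $A$ and of $D$ respectively, and $h_1 - h_2 = B w_n$. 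The first two conditions fix the strictly upper part of $h_1$ and (via conjugation by $w_n$) the strictly lower part of $h_2$. The equation $h_1 - h_2 = Bw_n$ then determines the remaining entries term by term over $\cO$, with a single free integral parameter on the diagonals.

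For (ii), fix $\iota(h_1,h_2) \in \lien_\cO \cap \lieh_\cO$, so $h_1, h_2$ are strictly upper triangular. Formula \eqref{eq:xi2} gives
\[
  \xi\iota(h_1,h_2)\xi^{-1} \;=\; \iota(h_1, h_2^{w_n}) \;+\; \left(\begin{smallmatrix} 0 & h_2^{w_n} - h_1 \\ 0 & 0 \end{smallmatrix}\right).
\]
The block diagonal summand lies in $[\lieh,\lieh]_\cO$ because $h_1$ is strictly upper triangular and $h_2^{w_n}$ is strictly lower triangular, so each diagonal block is trace zero. For the off-diagonal summand I would take $Y = \iota(-h_2^{w_n}, -h_1^{w_n})$; since $h_1^{w_n}$ and $h_2^{w_n}$ are strictly lower triangular, $Y$ is block-diagonal with strictly-lower blocks, hence genuinely strictly lower triangular in $\mathfrak{gl}_{2n}$ and therefore in $\lien^-_\cO$. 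A direct application of \eqref{eq:xi2} to $Y$, followed by addition of the auxiliary element $\iota(h_2^{w_n}, h_1) \in [\lieh,\lieh]_\cO$, produces exactly the required off-diagonal matrix.

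The content is not conceptual but $\cO$-integral: over a field both statements follow immediately from dimension counts in $\lieg$, but over $\cO$ one must actually exhibit integral lifts. The explicit decompositions above accomplish this, and the remaining work is mechanical verification of the block formulas \eqref{eq:xi1}--\eqref{eq:xi2}.
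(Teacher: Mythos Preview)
Your proof is correct. For (ii) you recover exactly the paper's decomposition --- the $[\lieh,\lieh]_\cO$-part is $\iota(h_1+h_2^{w_n},\, h_1+h_2^{w_n})$ and the $\lien^-_\cO$-part is $\iota(-h_2^{w_n},\,-h_1^{w_n})$ --- only computed on the $\xi$-conjugated side via \eqref{eq:xi2} rather than after first conjugating the inclusion by $\xi^{-1}$ as the paper does. For (i) the approaches differ: the paper asserts that since $\xi\in G(\cO)$ it suffices to work over $E$, and then computes $\dim_E(\lieh_E\cap{}^\xi\lieb_E^-)=n$, whereas you construct the integral lift directly by prescribing the strictly upper part of $h_1$, the strictly lower part of $h_2$, and solving $h_1-h_2=Bw_n$. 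Your route has the advantage of being manifestly integral, sidestepping the question of why surjectivity of a sum of two direct summands over the fraction field should force surjectivity over $\cO$. Two small slips: there are $n$ free diagonal parameters (not ``a single'' one), and your closing remark that (ii) follows from a dimension count over a field is too quick, since $[\lieh,\lieh]_E+{}^\xi\lien_E^-$ is still a proper subspace of $\lieg_E$ and one needs the explicit check even there.
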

\begin{proof} (i) 
  Since $\xi\in G(\cO)$, it suffices to verify it over $E$, where it amounts to show that $\dim_E\left(\lieh_E\cap{}^\xi\,\lieb_E^-\right) = n$.
To this end, let $l_1,l_2$ be lower triangular matrices in $M_n(E)$ and $u\in M_n(E)$. Then
\begin{eqnarray*}
\xi\cdot
\begin{pmatrix}
l_1&\\
u&l_2
\end{pmatrix}\cdot
\xi^{-1}
&=&
\begin{pmatrix}
  l_1+w_nu& w_n l_2 w_n-l_1-w_nu\\
  w_nu& w_n l_2 w_n-w_nu
\end{pmatrix}
\end{eqnarray*}
lies in $\lieh_E$ if and only if
$u = 0$ and $l_1 = w_n l_2 w_n$.
Therefore, $l_1$ and $l_2$ are diagonal matrices determining each other uniquely. 

(ii) Conjugation   by $\xi^{-1}$ reduces the claim  to the problem of solving
$$
\begin{pmatrix}
  n_1&\\&n_2
\end{pmatrix}=
\begin{pmatrix}
  h_1&(h_1-h_2)w_n\\&w_n h_2 w_n
\end{pmatrix}
\;+\;
\begin{pmatrix}
  \overline{n}_1&\\&\overline{n}_2
\end{pmatrix}
$$
for given $\iota(n_1,n_2)\in\lieh\cap\lien_\cO$ and unkowns $\iota(h_1,h_2)\in[\lieh_\cO,\lieh_\cO]$ and $\iota(\overline{n}_1,\overline{n}_2)\in\lien^-$.  The choice
$$
h_1=h_2=n_1+w_n n_2 w_n,\quad \overline{n}_1=-w_n n_2 w_n,\quad \overline{n}_2=-w_n n_1 w_n,
$$
is a solution with the desired properties.
\end{proof}

\begin{corollary}\label{cor:productrelations}
For any $\beta\geqslant0$, the following  relations  hold inside $\cU(\lieg_\cO)$:
\begin{enumerate}
   \item  $\cU(\lieg_\cO)= \cU(\lieh_\cO) \cdot \cU({}^\xi \lieb_\cO^-)$\text{, and } 
\item   $\cU({}^{\xi t_p^\beta}\lien_\cO)\subseteq 
  \cU([\lieh,\lieh]_\cO+p^\beta\lieh_\cO)\cdot
  \cU({}^\xi\lien_\cO^- + p^\beta \, {}^\xi\lieb_\cO^-)$. 
\end{enumerate}
\end{corollary}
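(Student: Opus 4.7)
Both parts are established by a Poincar\'e--Birkhoff--Witt (PBW)-type argument built on the Lie-algebra sum decompositions of Proposition~\ref{prop:transversality}.

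For (i), the equality $\lieg_\cO = \lieh_\cO + {}^\xi\lieb_\cO^-$ from Proposition~\ref{prop:transversality}(i), together with the fact that $\lieh_\cO$ is saturated in $\lieg_\cO$, implies that the quotient $\lieg_\cO/\lieh_\cO$ is a free $\cO$-module and that the surjection ${}^\xi\lieb_\cO^- \twoheadrightarrow \lieg_\cO/\lieh_\cO$ admits an $\cO$-linear section. I would then pick an ordered $\cO$-basis of $\lieg_\cO$ whose initial block is a basis of $\lieh_\cO$ and whose remaining entries are the lifts in ${}^\xi\lieb_\cO^-$ of a basis of $\lieg_\cO/\lieh_\cO$. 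Integral PBW realises $\cU(\lieg_\cO)$ as the free $\cO$-module on the ordered monomials in this basis; each such monomial factors as an element of $\cU(\lieh_\cO)$ times a monomial in elements of ${}^\xi\lieb_\cO^-$, hence lies in $\cU(\lieh_\cO)\cdot\cU({}^\xi\lieb_\cO^-)$. The reverse containment is trivial.

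For (ii), the starting point is the $\cO$-module direct-sum decomposition
\[
  {}^{\xi t_p^\beta}\lien_\cO \;=\; {}^\xi(\lien_\cO\cap\lieh_\cO) \;\oplus\; p^\beta\,{}^\xi\lieu_\cO,
\]
which follows because conjugation by $t_p^\beta$ fixes $\lieh_\cO$ pointwise while scaling $\lieu_\cO$ by $p^\beta$ (cf.\ \S\ref{sec:int-lattices}). Both summands are Lie subalgebras and, by the Levi-type relation $[\lien\cap\lieh,\,\lieu]\subseteq\lieu$, together form a semidirect product, so PBW gives $\cU({}^{\xi t_p^\beta}\lien_\cO) = \cU({}^\xi(\lien_\cO\cap\lieh_\cO))\cdot\cU(p^\beta\,{}^\xi\lieu_\cO)$. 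The first factor falls into $\cU([\lieh,\lieh]_\cO)\cdot\cU({}^\xi\lien_\cO^-)$ by re-running the argument of (i) on the finer inclusion ${}^\xi(\lien_\cO\cap\lieh_\cO)\subseteq[\lieh,\lieh]_\cO+{}^\xi\lien_\cO^-$ of Proposition~\ref{prop:transversality}(ii). The second factor falls into $\cU(p^\beta\lieh_\cO)\cdot\cU(p^\beta\,{}^\xi\lieb_\cO^-)$ by applying the same argument to $p^\beta\,{}^\xi\lieu_\cO\subseteq p^\beta\lieh_\cO+p^\beta\,{}^\xi\lieb_\cO^-$, the $p^\beta$-scaled version of (i).

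The remaining, and most delicate, step is to collapse the resulting fourfold product
\[
  \cU([\lieh,\lieh]_\cO)\cdot\cU({}^\xi\lien_\cO^-)\cdot\cU(p^\beta\lieh_\cO)\cdot\cU(p^\beta\,{}^\xi\lieb_\cO^-)
\]
into $\cU([\lieh,\lieh]_\cO+p^\beta\lieh_\cO)\cdot\cU({}^\xi\lien_\cO^-+p^\beta\,{}^\xi\lieb_\cO^-)$. By a PBW-degree induction modelled on part~(i), this reduces to absorbing the cross-commutators $[{}^\xi\lien_\cO^-,\ p^\beta\lieh_\cO]$ and $[p^\beta\,{}^\xi\lieb_\cO^-,\ [\lieh,\lieh]_\cO]$, each of which carries a factor of $p^\beta$, so lies in $p^\beta\lieg_\cO=p^\beta\lieh_\cO+p^\beta\,{}^\xi\lieb_\cO^-$ by (i) and is thereby accounted for by the $p^\beta$-thickenings that are baked into the definitions of the two outer factors in the target. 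I expect this bookkeeping to be where the main work lies; the presence of $p^\beta\lieh_\cO$ inside the left factor and of $p^\beta\,{}^\xi\lieb_\cO^-$ inside the right one is precisely what makes the closure go through integrally, and yields the claimed inclusion.
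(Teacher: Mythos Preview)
Your treatment of (i) matches the paper's. For (ii), however, you pass to enveloping algebras prematurely, which creates the fourfold product and the ``collapsing'' step you flag as delicate. The paper avoids this entirely by staying at the Lie-algebra level until the very end: from the decomposition ${}^{\xi t_p^\beta}\lien_\cO = {}^\xi(\lien_\cO\cap\lieh_\cO)\oplus p^\beta\,{}^\xi\lieu_\cO$, it applies Proposition~\ref{prop:transversality}(ii) to the first summand and the $p^\beta$-scaled Proposition~\ref{prop:transversality}(i) to the second, obtaining in one stroke the inclusion
\[
{}^{\xi t_p^\beta}\lien_\cO \;\subseteq\; \bigl([\lieh,\lieh]_\cO+p^\beta\lieh_\cO\bigr)\;+\;\bigl({}^\xi\lien_\cO^-+p^\beta\,{}^\xi\lieb_\cO^-\bigr).
\]
A \emph{single} application of PBW --- using that each parenthesised term is itself a Lie $\cO$-subalgebra --- then gives the result. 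The commutator bookkeeping you anticipate simply does not arise if you defer PBW to the last step; your intermediate factorisation $\cU({}^\xi(\lien_\cO\cap\lieh_\cO))\cdot\cU(p^\beta\,{}^\xi\lieu_\cO)$ and the subsequent reshuffling are unnecessary detours.
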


\begin{proof}
  (i) This is a consequence of Proposition \ref{prop:transversality}(i) and the Poincar\'e--Birkhoff--Witt Theorem. 
  
  (ii) The decomposition $\lien_\cO = \left(\lieh_\cO\cap\lien_\cO\right)\oplus\lieu_\cO,$ gives
$t_p^\beta\lien_\cO t_p^{-\beta} = 
  \left(\lieh_\cO\cap\lien_\cO\right)\oplus p^\beta\lieu_\cO$. Conjugating by $\xi$ we get 
$$
  \xi t_p^\beta\lien_\cO t_p^{-\beta}\xi^{-1}=
 {}^\xi \! \left(\lieh_\cO \cap \lien_\cO \right)\oplus p^\beta \, {}^\xi\lieu_\cO. 
$$
Applying  Proposition \ref{prop:transversality}(ii) to the first summand and Proposition \ref{prop:transversality}(i) to the second we get
$$
  \xi t_p^\beta\lien_\cO t_p^{-\beta}\xi^{-1}\subseteq 
  \left([\lieh,\lieh]_\cO+p^\beta\lieh_\cO\right)\;+\;
  \left({}^\xi\lien_\cO^- + p^\beta \, {}^\xi \lieb_\cO^- \right).
$$
One concludes again by the Poincar\'e-Birkhoff-Witt Theorem, because the sums within the parentheses on the right hand side are Lie $\cO$-algebras.
\end{proof}

\subsubsection{\bf Lattices and the projection formula}

Recall from \eqref{eqn:O-lattice-in-V-mu} the lowest weight vector $v_0 \in V^\mu_{E}$ and the $G(\cO)$-lattice
$V^\mu_{\cO} = \cU(\lieg_\cO)\cdot v_0 = \cU(\lien_\cO)\cdot v_0$. 
Recall also the  $\bullet$-action of the semi-group $\Lambda_p$ on $V^\mu_{\cO}$ as in \eqref{eqn:two-actions}.

Given  $j \in \Crit(\mu)$ recall from \S\ref{sec:kappa-j} the map
$\kappa_j: V^\mu_{\cO} \to V^{(j,\w-j)}_{\cO}$. By Corollary \ref{cor:productrelations}(i)
$$
V^\mu_{\cO}=\cU(\lieh_\cO)\cdot \xi v_0,
$$
which implies that  $\kappa_j(\xi v_0)$ is an $\cO$-basis of  $V^{(j,\w-j)}_{\cO}$ yielding a surjective $\cO$-linear  map
\begin{equation}\label{eq:basis-j-line}
\kappa_j^\circ : V^\mu_{\cO} \to \cO, \quad \mbox{defined by} \quad 
\kappa_j(v) = \kappa_j^\circ(v) \kappa_j(\xi v_0). 
\end{equation}
It is independent from the choice of $\kappa_j$ because of \eqref{eqn:hom-H-one-dim},  and $\kappa_j^\circ(\xi v_0)=1$. We now come to the main technical result that is at the heart of our proof of the Manin relations.

\begin{prop}
\label{prop:manin-technical}
  For any $\beta\geqslant 0$, $v\in  (\xi t_p^\beta)\bullet V^\mu_{\cO}  \subset V^\mu_{\cO}$ and for all 
   $j,j'\in \Crit(\mu)$ we have
  \begin{equation}    \label{eq:manin-technical}
    \kappa^\circ_{j}(v)\;\equiv\;\kappa^\circ_{j'}(v)\pmod{p^\beta}.
  \end{equation}
\end{prop}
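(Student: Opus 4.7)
The plan is to reduce the congruence \eqref{eq:manin-technical} to the assertion that the augmentation $\epsilon\colon\cU(\lieg_\cO)\to\cO$ is independent of $j$. The key technical input is an elementary lemma: if $\liea\subseteq \lieg_\cO$ is a Lie subalgebra acting on an $\cO$-module $M$ and $m\in M$ satisfies $\liea\cdot m\subseteq p^\beta\cO m$, then $\cO m$ is $\cU(\liea)$-stable and $X\cdot m\equiv \epsilon(X)m\pmod{p^\beta \cO m}$ for every $X\in\cU(\liea)$. This follows by induction on the degree of $X$: each generator of $\liea$ sends $m$ into $p^\beta\cO m$, so every monomial of positive degree does the same, and only the degree-zero part contributes the augmentation.

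First, \eqref{eqn:O-lattice-in-V-mu}, the identity $t_p^\beta\bullet v_0=v_0$, and the formula $(\xi t_p^\beta)\bullet(m\cdot v_0)=\Ad(\xi t_p^\beta)(m)\cdot\xi v_0$ following from \eqref{eqn:two-actions} together imply that any $v\in(\xi t_p^\beta)\bullet V^\mu_\cO$ takes the form $v=X\cdot \xi v_0$ for some $X\in\cU({}^{\xi t_p^\beta}\lien_\cO)$. Invoking Corollary \ref{cor:productrelations}(ii), one splits $X=\sum_k Y_kZ_k$ with $Y_k\in\cU([\lieh,\lieh]_\cO+p^\beta\lieh_\cO)$ and $Z_k\in\cU({}^\xi\lien^-_\cO+p^\beta\,{}^\xi\lieb^-_\cO)$.

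Next, I would apply the lemma twice. For $Z_k$ acting on $\xi v_0\in V^\mu_\cO$: since $\lien^-$ kills the lowest weight vector $v_0$, the subalgebra ${}^\xi\lien^-$ annihilates $\xi v_0$, while $p^\beta\,{}^\xi\lieb^-_\cO$ acts on $\xi v_0$ via $p^\beta$-multiples of the integral weight $w_{2n}(\mu)$; both contributions land in $p^\beta\cO\cdot\xi v_0$, so the lemma gives $Z_k\cdot\xi v_0\equiv \epsilon(Z_k)\xi v_0\pmod{p^\beta V^\mu_\cO}$. For $Y_k$ acting on $\kappa_j(\xi v_0)\in V^{(j,\w-j)}_\cO$: since the character defining $V^{(j,\w-j)}$ vanishes on commutators and sends $p^\beta\lieh_\cO$ into $p^\beta\cO$, the lemma's hypotheses are again satisfied, yielding $Y_k\cdot \kappa_j(\xi v_0)\equiv \epsilon(Y_k)\kappa_j(\xi v_0)\pmod{p^\beta V^{(j,\w-j)}_\cO}$. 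Crucially, $\epsilon(Y_k)$ and $\epsilon(Z_k)$ depend only on the algebraic elements $Y_k$, $Z_k$, and not on $j$.

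Combining via $H$-equivariance of $\kappa_j$ and $\cO$-linearity of $\kappa_j^\circ$ gives
$$\kappa_j^\circ(v)=\sum_k\kappa_j^\circ(Y_kZ_k\cdot\xi v_0)\equiv \sum_k\epsilon(Y_k)\epsilon(Z_k)=\epsilon(X)\pmod{p^\beta},$$
independently of $j$, whence $\kappa_j^\circ(v)\equiv \kappa_{j'}^\circ(v)\pmod{p^\beta}$. The principal obstacle is verifying the lemma's hypothesis for the first pair: one needs to use that $\xi v_0$ is a weight vector for the twisted torus ${}^\xi T$, with integral weight transported from $v_0$ via the formula $H\cdot\xi v_0=w_{2n}(\mu)(\xi^{-1}H\xi)\xi v_0$ for $H\in{}^\xi\liet$. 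The second pair follows from the elementary observation that the differential of the character defining $V^{(j,\w-j)}$ vanishes on $[\lieh,\lieh]$ and is $\cO$-valued on $\lieh_\cO$.
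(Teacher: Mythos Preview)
Your proof is correct and follows essentially the same approach as the paper's: both use Corollary~\ref{cor:productrelations}(ii) to factor elements of $\cU({}^{\xi t_p^\beta}\lien_\cO)$, then exploit that ${}^\xi\lien^-$ annihilates $\xi v_0$ and that $[\lieh,\lieh]$ acts trivially on the one-dimensional target to reduce to the augmentation, which is manifestly independent of $j$. Your packaging via the abstract lemma on $\cU(\liea)$-stable lines is a slightly more explicit formulation of what the paper does by hand via the splitting $x=x_0+x_1$, $y=y_0+y_1$ into degree-zero and positive-degree parts.
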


\begin{proof}
 By \eqref{eqn:two-actions}  for $v\in  (\xi t_p^\beta)\bullet V^\mu_{\cO} $ there exists $m\in \cU(\lien_\cO)$ with
$$
    v = \xi\cdot t_p^\beta \bullet (m v_0) = {}^{\xi t_p^\beta} m \cdot\xi (t_p^\beta \bullet v_0) 
    = {}^{\xi t_p^\beta} m \cdot\xi v_0 
    \ \in \ \cU({}^{\xi t_p^\beta}\lien_{\cO}) \cdot\xi v_0. 
$$
  By Corollary \ref{cor:productrelations}(ii)  write 
$$
    {}^{\xi t_p^\beta} m = xy\text{, with }   x \in \cU([\lieh,\lieh]_\cO+p^\beta\lieh) \text{  and }  
 y \in \cU({}^\xi\lien_\cO^-+p^\beta{}^\xi\lieb_\cO^-).   
 $$
Let $x_0, y_0  \in \cO$ be the degree zero terms of $x$ and $y$, respectively, and let 
  \begin{eqnarray*}
    x_1 = x - x_0 &\in&
    ([\lieh,\lieh]_\cO+p^\beta\lieh)\cdot \cU([\lieh,\lieh]_\cO+p^\beta\lieh),\\
    y_1 = y - y_0 &\in&
    ({}^\xi\lien_\cO^-+p^\beta{}^\xi\lieb_\cO^-)\cdot \cU({}^\xi\lien_\cO^-+p^\beta{}^\xi\lieb_\cO^-),
  \end{eqnarray*}
be the higher degree terms in their respective enveloping algebras. Then
  \begin{align*}
    \kappa_j^\circ(v)&= \kappa_j^\circ(xy \cdot \xi v_0) = 
    x \cdot \kappa_j^\circ(y \cdot \xi v_0) & \quad \mbox{(since $\kappa_j^\circ$ is $H$-equivariant)}\\
    &\equiv
    x \cdot \kappa_j^\circ(y_0 \cdot \xi v_0)\pmod{p^\beta}   & \quad \mbox{(since ${}^\xi\lien_\cO^-$ acts trivially on $\xi v_0$)}\\
     &\equiv
    x_0 \cdot\kappa_j^\circ(y_0\cdot\xi v_0)\pmod{p^\beta} & \quad \mbox{(since $[\lieh,\lieh]_\cO$ acts trivially on a line)}\\
&=
    x_0y_0\cdot\kappa_j^\circ(\xi v_0) = x_0y_0, &
  \end{align*}

  which does not depend on $j$ as claimed.
\end{proof}

\subsubsection{\bf Proof of Theorem~\ref{thm:manin}}
Since $\cO[[\Cl(p^\infty)]]= \varprojlim_{\beta}(\cO/p^\beta\cO)[\Cl(p^\beta)]$ and since $\varepsilon \pmod{p^\beta}$ factors through $\Cl(p^\beta)$, it is enough to check that given $\beta\geqslant 1$ and $[x]\in \Cl(p^\beta)$ one has 
$$ \varepsilon ([x]) \cE^{j,\eta}_{\beta,[x]}(\phi) \equiv \cE^{j+1,\eta}_{\beta,[x]}(\phi) \pmod{p^\beta}.$$ 
Since by  \eqref{eq:triv-delta} one has  $\triv_{[\delta(x,y)]}^*= \varepsilon (x^j y^{\w})\triv_{\delta(x,y)}^*$, 
it suffices to show that (see \eqref{eqn:evaluation-map}) 
$$ \cE^{j,\eta}_{\beta,\delta}(\phi)=(\relbar\cap \theta_{[\delta]}) \circ\triv_\delta^*\circ\kappa_j \circ \tau_\beta^\circ\circ \iota_\beta^*(\phi)\equiv
 (\relbar\cap \theta_{[\delta]}) \circ \triv_\delta^*\circ\kappa_{j+1}^* \circ \tau_\beta^\circ\circ \iota_\beta^*(\phi)\pmod{p^\beta}.$$ 
Now, by definition    the homomorphism of sheaves   $\tau_\beta^\circ $ defined in \S\ref{sec:tau-beta}  factors as:
  $$  \iota_\beta^*\cV_{\cO}^\mu \longrightarrow\iota^*\left((\xi t_p^\beta)\bullet\cV_{\cO}^\mu\right) \longrightarrow  \iota^*\cV_{\cO}^\mu. $$
  Hence   Proposition \ref{prop:manin-technical} translates to the statement that (for the choice of  basis of $V^{(j,\w-j)}_{\cO}$ 
   as in \eqref{eq:basis-j-line})  
   $(\relbar\cap \theta_{[\delta]}) \circ \triv_\delta^*\circ\kappa_j \circ \tau_\beta^\circ\circ \iota_\beta^*(\phi)\pmod{p^\beta}$ 
   is independent of $j\in\Crit(\mu)$.\hfill $\square$

\section{\bf Local considerations}
\label{sec:local}

We delineate some local calculations that will be needed in the  global considerations of the next section. 
For only this  section, $F$ denotes a finite extension of $\Q_p$, $\cO$ its ring of integers, $\mathcal{P}$  the maximal ideal, 
 $\varpi\in \mathcal{P}$ a uniformizer, $q=\#(\cO/\cP)$ and $\delta$  
the valuation of the different. 
 We use local notations  corresponding to the global notations introduced at the beginning of \S\ref{sec:cohomology}. For example
   $G=\GL_{2n}(F)\supset H=\GL_{n}(F)\times \GL_{n}(F)$, {\it etc.}

\subsection{Parahoric invariants}
\label{sec:parahoric}
Let $K= \GL_{2n}(\cO)$ be the standard maximal compact subgroup of $G$. 
Define  the  parahoric (resp., Iwahori) subgroup $J$ (resp., $I$) of  $K$ consisting of matrices whose reduction modulo $\mathcal{P}$ belongs to  $Q(\cO/\cP)$ (resp., to $B(\cO/\cP)$). One has:
\begin{equation}
\label{eqn:parahoric}
J  =  
\left\{  \begin{pmatrix} a & b \\ c & d \end{pmatrix} \in K \ \Big{|} \ 
a, d \in \GL_n(\cO),  c \in M_n(\mathcal{P}),  b \in M_n(\cO) \right\}.  
\end{equation}

 Let   $\Pi$ be an algebraic unramified and generic  representation of $G$. Then there exists an
unramified character $\lambda = \otimes_{i=1}^{2n} \lambda_i :T \to\bar\Q^\times\xrightarrow{i_\infty}\C^\times$ such that    
\begin{equation}\label{eq:spherical}
   \Pi=  \Ind_{B}^{G}(|\cdot|^{\frac{2n-1}{2}}\lambda),
  \end{equation}
where the right hand side is the normalized parabolic induction, which differs from the usual induction by $\delta_B^{\frac{1}{2}}$ where 
$$\delta_B(t_1,t_2,\dots,t_{2n})= |t_1 |^{2n-1} |t_2 |^{2n-3} \cdot\dots\cdot |t_{2n}|^{1-2n}.$$

Recall  Jacquet's exact  functor sending an admissible $G$-representation $V$ to the space of its 
co-invariants of $U$ defined as $V_U=V /\langle \{u\cdot v-v\mid u\in U, v\in V \} \rangle$ which  is an 
admissible $H$-representation. 
The Weyl groups of $G\supset H$ are   given by $ \mathfrak{S}_{2n} \simeq W_G \supset W_H\simeq (\mathfrak{S}_{n}\times\mathfrak{S}_{n}) $.  The group $W_G$ acts on the right on characters of $T$. There is a natural bijection: 
     \begin{equation}\label{eq:tau}
W_G / W_H\xrightarrow{\sim}  \{\tau\subset \{1, 2, \dots ,2n\} |\#\tau=n\},\,\ \rho\mapsto \{\rho(1), \dots ,\rho(n)\}
          \end{equation}

   \begin{lemma}  \label{lemma-Jacquet}
   The semi-simplification of  the Jacquet module  $\Pi_U$ is isomorphic to: 
   \begin{equation}\label{eq:Q-Jacquet}
   \bigoplus_{ \tau\in W_G / W_H} \delta_Q^{\frac{1}{2}}\cdot \Ind_{B\cap H}^H(|\cdot|^{\frac{2n-1}{2}}\lambda^\tau),
    \end{equation}  
     where 
$\delta_Q(t_1,t_2,\dots,t_{2n})= \left|{t_1\dots  t_n}\cdot{t_{n+1}^{-1}\dots  t_{2n}^{-1}} \right|^n$. The   semi-simplification  can be omitted if  
  $\Pi$ is regular in the  sense that $\alpha_{i} = \lambda_i(\varpi)$  are pairwise distinct for $1\leqslant i\leqslant 2n$. 
  
The characteristic polynomial  of  the Hecke operator $U_\cP=[J t_\varpi J ]$ acting on $\Pi^{J }$ equals      
\[ \prod_{ \tau\in W_G / W_H}\left(X-q^{\frac{n(1-n)}{2}} \prod_{i\in \tau} \alpha_{i}\right).\]          
  \end{lemma}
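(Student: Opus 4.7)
For the first assertion, my plan is to apply the Bernstein--Zelevinsky geometric lemma to the normalized Jacquet module of $\Pi=\Ind_B^G(\chi)$ with $\chi:=|\cdot|^{(2n-1)/2}\lambda$, computed with respect to $Q$. The geometric lemma produces a natural filtration on this Jacquet module whose graded pieces are indexed by the double cosets $W_H\backslash W_G/W_T = W_H\backslash W_G$, and the piece attached to a representative $\tau$ is the normalized induction $\Ind_{B\cap H}^H(\chi^\tau)$ of the conjugate character. Since the unnormalized Jacquet $\Pi_U$ differs from the normalized one by the twist $\delta_Q^{1/2}$, reverting this twist and identifying $W_H\backslash W_G$ with $W_G/W_H$ via inversion, and then with $n$-element subsets of $\{1,\ldots,2n\}$ via \eqref{eq:tau}, yields \eqref{eq:Q-Jacquet} in the Grothendieck group.

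For the second assertion, I would argue that under the regularity hypothesis the $H$-representations $\Ind_{B\cap H}^H(\chi^\tau)$ are pairwise distinct and irreducible. Regularity forces the stabilizer of $\chi$ in $W_G$ to be trivial, so distinct cosets in $W_G/W_H$ yield characters $\chi^\tau$ in pairwise distinct $W_H$-orbits; the Bernstein--Zelevinsky irreducibility criterion then shows each $\Ind_{B\cap H}^H(\chi^\tau)$ is irreducible, and the pieces have pairwise distinct Bernstein supercuspidal supports. The Bernstein decomposition of the category of smooth admissible $H$-modules therefore rules out non-trivial extensions between distinct pieces, splitting the filtration of $\Pi_U$ canonically into the claimed direct sum.

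For the third assertion I would invoke Borel's theorem, which identifies $\Pi^J$ with $(\Pi_U)^{H(\cO)}$ via the composition $\Pi^J\hookrightarrow\Pi\twoheadrightarrow\Pi_U$, intertwining the Hecke operator $U_\cP=[Jt_\varpi J]$ with a specific scaling of the action of $t_\varpi$ on $\Pi_U$. Because $t_\varpi=\iota(\varpi\mathbf{1}_n,\mathbf{1}_n)$ is central in $H$, it acts on each summand of $\Pi_U$ by the central character of that summand, and each summand contributes a single line of $H(\cO)$-invariants, hence one eigenvalue per $\tau$. Combining the $\delta_Q^{1/2}$-twist, the evaluation of the central character of $\Ind_{B\cap H}^H(\chi^\tau)$ at $t_\varpi$, and the normalization factor from Borel's isomorphism should produce $q^{n(1-n)/2}\alpha^\tau$ on the $\tau$-th summand, from which the characteristic polynomial follows by taking the product over $\tau$. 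The main obstacle I foresee is the careful bookkeeping of these three normalization factors so that they conspire to yield the uniform, $\tau$-independent prefactor $q^{n(1-n)/2}$.
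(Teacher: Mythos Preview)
Your approach is correct in outline and yields the lemma, but it differs from the paper's proof in each of the three parts, and there is one inessential error.

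For the first assertion, you invoke the Bernstein--Zelevinsky geometric lemma directly, whereas the paper argues via transitivity $\Ind_B^G=\Ind_Q^G\Ind_{B\cap H}^H$, Frobenius reciprocity, and a dimension count against the full Jacquet module $\Pi_N$. Both routes are standard; yours is more direct, the paper's is slightly more elementary.

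For the second assertion, your claim that each $\Ind_{B\cap H}^H(\chi^\tau)$ is irreducible under the stated regularity hypothesis is \emph{false}: pairwise distinctness of the $\alpha_i$ does not exclude $\alpha_i/\alpha_j=q$, which is exactly the reducibility condition. Fortunately you do not need irreducibility. What you do need, and what you correctly state, is that for distinct cosets $\tau$ the characters $\chi^\tau$ lie in distinct $W_H$-orbits (this uses only that the stabilizer of $\chi$ in $W_G$ is trivial). The Bernstein center then acts on $\Ind_{B\cap H}^H(\chi^\tau)$ by the infinitesimal character attached to that orbit, so the graded pieces live in distinct generalized eigenspaces for the center and the filtration splits. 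Simply delete the irreducibility sentence. The paper handles this point more tersely, essentially folding it into the discussion of $\Pi^J$.

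For the third assertion, your route via Casselman's isomorphism $\Pi^J\xrightarrow{\sim}(\Pi_U)^{H(\cO)}$ (which you call Borel's theorem; Borel treated the Iwahori case, Casselman the general statement) and the central action of $t_\varpi\in H$ is cleaner than the paper's reference to a double coset computation \`a la Hida. Your anticipated bookkeeping works out: the coset decomposition $Jt_\varpi J=\bigsqcup_{m\in M_n(\cO/\cP)}\left(\begin{smallmatrix}\mathbf{1}_n&m\\&\mathbf{1}_n\end{smallmatrix}\right)t_\varpi J$ shows $U_\cP$ corresponds to $q^{n^2}\cdot t_\varpi$ on $(\Pi_U)^{H(\cO)}$, and on the $\tau$-summand $t_\varpi$ acts by $\delta_Q^{1/2}(t_\varpi)\cdot q^{-n(2n-1)/2}\alpha^\tau=q^{-n^2/2-n(2n-1)/2}\alpha^\tau$, whence $U_\cP$ acts by $q^{n(1-n)/2}\alpha^\tau$ as required.
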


  \begin{proof}  The semi-simplification  of the Jacquet module $\Pi_N$   with respect to $B$ is given by: 
   \begin{equation}\label{eq:B-Jacquet}
   \bigoplus_{ \rho\in W_G} \delta_B^{\frac{1}{2}}|\cdot|^{\frac{2n-1}{2}}\lambda^\rho.
    \end{equation}  
Since $ \Ind_{B}^G=  \Ind_{Q}^G \Ind_{B\cap H}^H$, Frobenius reciprocity implies that any  
 irreducible sub-quotient of   the Jacquet module of $\Pi$ with respect to $Q$ is isomorphic to one of the summands in \eqref{eq:Q-Jacquet}. The first claim then follows by a simple dimension count based on \eqref{eq:B-Jacquet} and the transitivity of the Jacquet functors. 
 By Bruhat decomposition: 
    \begin{align}\label{eq:bruhat}
G=\coprod_{\rho \in W_G} B\rho I=\coprod_{\rho \in W_G/ W_H} B\rho J, \,\,\,
K=\coprod_{\rho \in W_G} (B\cap K) \rho I=\coprod_{\rho \in W_G/ W_H} (B\cap K)  \rho J, 
\end{align}
 the dimension of $\Pi^{J}$ is $\#(W_G/W_H)$. By  the Iwasawa decomposition $H= (B\cap H)\cdot (H\cap J)$,  
$$\left(\Ind_{B\cap H}^H(\delta_Q^{\frac{1}{2}}|\cdot|^{\frac{2n-1}{2}}\lambda^\tau)\right)^{H\cap J}$$
is a line  on which the central element   $\iota({\bf1}_{n},\varpi{\bf1}_{n})$ acts by $q^{\frac{n(1-n)}{2}} \prod_{i\in \tau} \alpha_{i}$. 
Under the  assumption that $\Pi$ is regular,  the image of $\Pi^{J }$ by  the Jacquet functor equals 
    the direct sum of the above lines when  $\tau$ runs over $W_G / W_H$, hence the  second claim. 
The proof of the  third claim is a standard double coset computation based on  \eqref{eq:bruhat} (see also \cite{Hida1998}). 
 \end{proof}  
\subsection{Twisted local Shalika integrals}
We will review the  theory of global  Shalika models and $L$-functions in \S\ref{sec:shalika}. 
The computations in this section will be needed in \S\ref{sec:p-adic-interpol} to  evaluate the  twisted local zeta integral.

Fix an additive character $\psi:F\to\C^\times$ of conductor $\varpi^{-\delta}$ and a multiplicative character $\eta : F^\times \to \C^\times$.

\begin{definition}\label{def:loca-shalika}
We say that an admissible  representation $\Pi$ of $G$ has a local $(\eta,\psi)$-Shalika model if there is a non-trivial (and hence injective) intertwining of $G=\GL_{2n}(F)$-modules
$$ \cS^{\eta}_{\psi}:\Pi \hookrightarrow \Ind_{S}^{G}(\eta\otimes\psi). $$
\end{definition}

For any $W\in  \Ind_{S}^{G}(\eta\otimes\psi)$ and for any quasi-character $\chi:F^\times \to \C^\times$  the zeta integral
\begin{equation}\label{eq:local-shalika-at-p}
\zeta(s;W,\chi)=\int_{\GL_n(F)} W\left(\left( \begin{array}{ccc}
h &  0\\
0 &  1_n
\end{array}\right)\right) \chi(\det(h))  |\det(h)|^{s-\frac{1}{2}} dh
\end{equation}
 is absolutely convergent for $\Re(s)\gg 0$. The following result is due to Friedberg and Jacquet.

\begin{prop}\cite[Prop. 3.1, 3.2]{friedjac}\label{prop:FJL-fct}
Assume that $\Pi$ has an $(\eta,\psi)$-Shalika model. Then for each $W\in \cS^{\eta}_{\psi}(\Pi)$ there is a holomorphic function $P(s;W,\chi)$ such that
$$\zeta(s;W,\chi)=L(s,\Pi\otimes \chi)P(s;W,\chi).$$
One may analytically continue $\zeta(s;W,\chi)$ by re-defining it as $L(s,\Pi\otimes \chi)P(s;W,\chi)$ for all $s\in\C$. Moreover, there exists a  vector $W_{\Pi}\in \cS^{\eta}_{\psi}(\Pi)$ such that  all unramified quasi-characters $\chi:F^\times\to\C^\times$ and  every $s\in\C$ one has
$$P(s;W_{\Pi},\chi)=(q^{s-\frac{1}{2}} \chi(\varpi))^{\delta n}. $$ 
If $\Pi$ is  spherical, then $W_{\Pi}$ can be taken to be the spherical vector $W_{\Pi}^\circ\in \cS^{\eta}_{\psi}(\Pi)$ normalized by the condition $W_{\Pi}^\circ({\bf 1}_{2n})=1$.
\end{prop}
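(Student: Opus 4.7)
The plan is to follow the Rankin--Selberg style unfolding method developed by Friedberg and Jacquet. First, I would establish absolute convergence of $\zeta(s;W,\chi)$ in a right half-plane. Using the Cartan decomposition $\GL_n(F)=\bigsqcup_{\lambda\in X_*^+(T_n)} K_n\varpi^\lambda K_n$ (with $K_n=\GL_n(\cO)$) and the $K_n$-finiteness of $h\mapsto W(\diag(h,{\bf 1}_n))$, the integral rewrites as a generalized Dirichlet series
$$\zeta(s;W,\chi) \;=\; \sum_{\lambda\in X_*^+(T_n)} W\!\left(\diag(\varpi^\lambda,{\bf 1}_n)\right)\cdot \chi(\varpi)^{|\lambda|}\, q^{-|\lambda|(s-1/2)}\cdot \mathrm{vol}\bigl(K_n\varpi^\lambda K_n\bigr),$$
after averaging over $K_n$ on both sides. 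Admissibility of $\Pi$ together with the Shalika transformation law (the central $\eta$-equivariance along the first factor) forces the coefficients to decay on the dominant cone, ensuring convergence for $\Re(s)\gg 0$.

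Next, to exhibit the $L$-factor, I would invoke Casselman's asymptotic theorem along the Levi $H$ of $Q$: any $W\in\cS^\eta_\psi(\Pi)$ admits, on the dominant cone of the torus $T_n$ embedded in $H$, an asymptotic expansion whose exponents are exactly the characters appearing in the Jacquet module $\Pi_U$ described in Lemma~\ref{lemma-Jacquet}. Substituting this expansion into the integral, interchanging summation and integration, and recognising each term as a $\GL_n$-Tate zeta integral, one obtains a finite linear combination of products of factors of shape $(1-\alpha\chi(\varpi)q^{-s})^{-1}$ whose total denominator is exactly $L(s,\Pi\otimes\chi)=\prod_{i=1}^{2n}(1-\alpha_i\chi(\varpi)q^{-s})^{-1}$. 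This simultaneously shows that $P(s;W,\chi):=\zeta(s;W,\chi)/L(s,\Pi\otimes\chi)$ lies in $\C[q^s,q^{-s}]$, hence is entire, and provides the meromorphic continuation of $\zeta(s;W,\chi)$ by \emph{defining} it as $L(s,\Pi\otimes\chi)\cdot P(s;W,\chi)$ for all $s\in\C$.

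For the test vector $W_\Pi$, the spherical case is the crucial computation. Taking $W_\Pi^\circ$ with $W_\Pi^\circ({\bf 1}_{2n})=1$, the Casselman--Shalika formula (in the Shalika setting, reflecting the conductor $\varpi^{-\delta}$ of $\psi$) evaluates $W_\Pi^\circ\bigl(\diag(\varpi^\lambda,{\bf 1}_n)\bigr)$ for dominant $\lambda\in X_*^+(T_n)$ as a Schur-type polynomial in the Satake parameters $\alpha_1,\ldots,\alpha_{2n}$, shifted by a factor of $\varpi^\delta$ absorbing the conductor of $\psi$. Plugging this formula into the Dirichlet series and summing via the dual Cauchy / Weyl character identity telescopes the sum to
$$\zeta(s;W_\Pi^\circ,\chi) \;=\; \bigl(q^{s-1/2}\chi(\varpi)\bigr)^{\delta n}\cdot \prod_{i=1}^{2n}\bigl(1-\alpha_i\chi(\varpi)q^{-s}\bigr)^{-1} \;=\; \bigl(q^{s-1/2}\chi(\varpi)\bigr)^{\delta n}\cdot L(s,\Pi\otimes\chi),$$
which is the desired normalisation. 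For a general (possibly ramified) $\Pi$ one constructs $W_\Pi$ by averaging a fixed $W\in\cS^\eta_\psi(\Pi)$ against a suitable compactly supported function on $\GL_n(F)$: since the linear map $W\mapsto P(s;W,\chi)$ hits a nonzero ideal of $\C[q^s,q^{-s}]$ independent of the unramified character $\chi$, one can arrange $P(s;W_\Pi,\chi)$ to equal the prescribed monomial $(q^{s-1/2}\chi(\varpi))^{\delta n}$ uniformly in unramified $\chi$.

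The main obstacle is the Casselman--Shalika evaluation of $W_\Pi^\circ$ on the torus of $H$ together with the combinatorial collapse of the resulting $\GL_n$-indexed sum into the full $\GL_{2n}$ Euler product; the remaining steps (convergence, reduction to Tate integrals via Jacquet asymptotics, and existence of a good vector in the ramified case) are comparatively routine manipulations in the representation theory of $p$-adic groups.
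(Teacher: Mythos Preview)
The paper does not give its own proof of this proposition: it is simply quoted from Friedberg--Jacquet \cite[Prop.\,3.1, 3.2]{friedjac}, with no argument reproduced. So there is nothing in the paper to compare your proposal against.

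Your sketch is a plausible reconstruction of how such a result is proved, but one point deserves caution. You invoke a ``Casselman--Shalika formula in the Shalika setting'' to evaluate $W_\Pi^\circ(\diag(\varpi^\lambda,{\bf 1}_n))$. The classical Casselman--Shalika formula is specific to Whittaker models; an analogous closed formula for spherical Shalika functions is not an off-the-shelf statement and is in effect one of the things Friedberg--Jacquet establish (by a different route, unfolding the Shalika integral and reducing to known $\GL_n$ computations rather than by asymptotics of the Jacquet module of $\Pi_U$). Your use of Lemma~\ref{lemma-Jacquet} to read off the exponents is fine for showing that $P(s;W,\chi)\in\C[q^{\pm s}]$, but the precise spherical computation producing the monomial $(q^{s-1/2}\chi(\varpi))^{\delta n}$ is the substantive step and is not a consequence of the Whittaker Casselman--Shalika formula alone. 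If you want a self-contained argument here, you should either cite the relevant spherical computation in \cite{friedjac} directly, or carry out the unfolding that reduces the Shalika zeta integral to a standard $\GL_n\times\GL_n$ Rankin--Selberg integral whose unramified evaluation is known.
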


For ramified twists we need the following refinement of  Proposition \ref{prop:FJL-fct}.

\begin{prop}\label{prop:localbirch}
Let $W\in \cS^{\eta}_{\psi}(\Pi)$ be a parahoric  invariant vector,  {\it i.e.}, 
\begin{equation}  \label{eq:shalikaproperty}
  W  \left(\begin{pmatrix}  h& \\   & h  \end{pmatrix}
  \begin{pmatrix}  {\bf 1}_n& X\\   & {\bf 1}_n  \end{pmatrix} \, g \, k  \right)=
  \eta(\det h)\psi(\tr X)W(g),
\end{equation}
for all $h\in\GL_n(F),$ $X\in M_n(F),$ $g\in G$ and $k\in J $.
Then for every finite order character $\chi:F^\times\to\C^\times$  of conductor  $\beta\geqslant 1$, and for  all $s\in\C$ with $\Re(s)\gg 0$   one has 
$$\zeta(s;W(-\cdot \xi t_\varpi^\beta),\chi) =   \cG(\chi)^n \cdot q^{\beta n\left(1-n\right)+(\beta+\delta) n\left(s-\frac{1}{2}\right)} 
W(t_\varpi^{-\delta}). $$
\end{prop}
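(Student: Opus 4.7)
The strategy is to unfold the integrand using the Shalika transformation property \eqref{eq:shalikaproperty}, then reduce the remaining integral over $\GL_n(F)$ to a product of one-dimensional Gauss sums via the parahoric invariance of $W$.

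\emph{Phase 1 (Shalika reduction).} The first step is the matrix identity, verified by direct multiplication,
\[
  \begin{pmatrix}h & 0\\ 0 & 1_n\end{pmatrix}\xi t_{\varpi}^{\beta}
  =\begin{pmatrix}h\varpi^{\beta} & hw_n\\ 0 & w_n\end{pmatrix}
  =\begin{pmatrix}w_n & w_n(w_n h w_n)\\ 0 & w_n\end{pmatrix}\cdot
   \begin{pmatrix}w_n h\varpi^{\beta} & 0\\ 0 & 1_n\end{pmatrix},
\]
in which the left factor lies in the Shalika subgroup $S$. Applying \eqref{eq:shalikaproperty} together with $\tr(w_n h w_n)=\tr h$ yields
\[
  W\!\left(\iota(h,1_n)\,\xi t_{\varpi}^{\beta}\right)
  =\eta(\det w_n)\psi(\tr h)\,W\!\left(\iota(w_n h\varpi^{\beta},1_n)\right).
\]
Substituting $h\mapsto \varpi^{-\beta}w_n h$ (which leaves the multiplicative measure $dh$ invariant) converts $\zeta(s;W(-\cdot\xi t_\varpi^\beta),\chi)$ into a constant multiple of
\[
  I \;=\; \int_{\GL_n(F)}\psi\!\left(\varpi^{-\beta}\tr(w_n h)\right) W(\iota(h,1_n))\,\chi(\det h)\,|\det h|^{s-\tfrac12}\,dh,
\]
where the explicit prefactor contributes the $\chi$-, $\eta$-, and $q^{\beta n(s-\tfrac12)}$-twists appearing in the statement.

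\emph{Phase 2 (Symmetries and reduction to diagonal data).} The second step is to analyze the function $F(h):=W(\iota(h,1_n))$. Applying the Shalika identity $\iota(k,1_n)=\iota_S(k,0)\,\iota(1_n,k^{-1})$ for $k\in\GL_n(\cO)$, together with the right $J$-invariance of $W$ (noting $\iota(1_n,k^{-1})\in J$), gives $F(k_1hk_2)=\eta(\det k_1)F(h)$ for $k_1,k_2\in\GL_n(\cO)$. Hence $F$ is determined by its values on the Cartan representatives $t_{\lambda}=\diag(\varpi^{\lambda_1},\dots,\varpi^{\lambda_n})$, with $F(t_\lambda)=:w(\lambda)$. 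Using the Cartan decomposition $h=k_1 t_\lambda k_2$, one writes
\[
  I=\sum_{\lambda}w(\lambda)\,\chi(\det t_\lambda)\,q^{|\lambda|(\tfrac12-s)}\, J(\lambda),
\]
with $J(\lambda)$ a double integral over $\GL_n(\cO)^2$ that mixes $\eta(\det k_1)$, $\chi(\det k_1k_2)$, and $\psi(\varpi^{-\beta}\tr(w_nk_1t_\lambda k_2))$. The key trick is to change variables $k_3=k_2w_nk_1$, which (by cyclicity of trace and unimodularity of the measure) decouples the $k_1$-integral — it collapses to $\vol(\GL_n(\cO))$ because $\eta$ is unramified — and leaves the matrix Gauss sum
\[
  \widetilde J(\lambda)\;=\;\int_{\GL_n(\cO)}\chi(\det k)\,\psi\!\left(\sum_{i=1}^{n}k_{ii}\varpi^{\lambda_i-\beta}\right)dk.
\]

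\emph{Phase 3 (Matrix Gauss sum and assembly).} The final step is to evaluate $\widetilde J(\lambda)$. Using the Iwahori/Bruhat stratification of $\GL_n(\cO)$, only the big cell survives after orthogonality against the ramified character $\chi\circ\det$, and the integral over the off-diagonal Iwahori coordinates produces the power of $q$ appearing as $q^{\beta n(1-n)}$. The remaining diagonal integral factors as a product of $n$ one-dimensional integrals of the form $\int_{\cO^\times}\chi(u)\psi(u\varpi^{\lambda_i-\beta})d^\times u$. Since $\chi$ has exact conductor $\beta$ and $\psi$ has conductor $\varpi^{-\delta}$, each one-dimensional integral vanishes unless $\lambda_i=-\delta$, in which case it is the Gauss sum $\cG(\chi)$. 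Hence only the term $\lambda=(-\delta,\dots,-\delta)$, i.e.\ $t_{\lambda}=\iota(\varpi^{-\delta}1_n,1_n)=t_\varpi^{-\delta}$, contributes, yielding $w(\lambda)=W(t_\varpi^{-\delta})$ and $n$ copies of $\cG(\chi)$. Combining with the prefactors from Phase 1 and the factor $|\det t_\lambda|^{s-\tfrac12}=q^{\delta n(s-\tfrac12)}$, the various $\chi$-twists collapse (given the paper's normalization of $\cG(\chi)$), yielding the asserted formula.

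\emph{Main obstacle.} The technically delicate part is Phase 3: a careful evaluation of the matrix Gauss sum $\widetilde J(\lambda)$ requires a stratification of $\GL_n(\cO)$ that simultaneously diagonalizes the characters $\chi\circ\det$ and $\psi$ on the diagonal. One must verify that all Bruhat cells besides the big one integrate to zero and that the off-diagonal $\cO$-integrals over the upper unipotent part of the Iwahori produce precisely the combinatorial factor $q^{\beta n(1-n)}$, ensuring correct bookkeeping of measures and of the $\chi(\det w_n)$, $\chi(\varpi^{-\beta n})$ twists accumulated during Phases 1 and 2.
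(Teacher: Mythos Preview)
Your Phases 1--2 are sound and your overall architecture matches the paper's. The gap lies in Phase 3, and it is exactly the obstacle you flag but do not resolve. The integrand of $\widetilde J(\lambda)$ involves $\chi(\det k)$, and on $\GL_n(\cO)$ one does \emph{not} have $\det k=\prod_i k_{ii}$, so the claimed factorization into one-dimensional Gauss sums is unjustified. Likewise, the assertion that ``only the big Bruhat cell survives by orthogonality against $\chi\circ\det$'' is not correct as stated: $\det$ is surjective from every Bruhat cell onto $(\cO/\cP)^\times$, so $\chi\circ\det$ alone cannot kill the small cells. Finally, the factor $q^{\beta n(1-n)}$ cannot arise from integrating off-diagonal Iwahori coordinates over $\cO$ --- there is no $\beta$ in the Haar measure of $\GL_n(\cO)$ --- so your accounting of this term is off.

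The paper sidesteps the matrix Gauss sum entirely. After your Phase~1 identity (which it writes, slightly more simply, as $W(\iota(h,1_n)\xi t_\varpi^\beta)=\psi(\tr h)\,W(\iota(h\varpi^\beta,1_n))$, absorbing $\iota(1_n,w_n)\in J$ on the right), it does \emph{not} pass to the Cartan decomposition. Instead it right-translates the argument of $W$ by the elementary matrices ${\bf 1}_n+ce_{ij}\in\GL_n(\cO)\subset J$ directly inside the $\GL_n(F)$-integral. Averaging over $c\in\cO$ (for $i\ne j$) and over $c\in\cO^\times$ (for $i=j$) forces the integration variable into the congruence set $N_n^-(\cP^\beta)\,T_n(\cO)\,N_n(\cP^\beta)$. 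On that domain the Iwahori factorization gives $\tr k=\sum_i t_i$ and $\det k=\prod_i t_i$ exactly, so the integral genuinely factors into $n$ one-dimensional Gauss sums, while the two unipotent blocks contribute the volume $q^{\beta n(1-n)}$. Your approach can be repaired by applying this same right-translation trick to $\widetilde J(\lambda)$ --- it forces all $\lambda_i=-\delta$ and then cuts the support down exactly as above --- but at that point you have essentially recovered the paper's argument, only after an unnecessary detour through Cartan representatives (which also requires care with the double-coset measure that your sketch omits).
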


\smallskip
\begin{proof}
For any $h\in\GL_{n}(F)$  and $ X\in M_n(\cO)$ the Shalika property \eqref{eq:shalikaproperty} implies that: 
\[
W\left(\begin{pmatrix}h&\\&{\bf1}\end{pmatrix}\xi t_\varpi^\beta\right)
= 
W\left(\begin{pmatrix}h&\\&{\bf1}\end{pmatrix}\xi t_\varpi^\beta\begin{pmatrix}{\bf1}&X\\&{\bf1}\end{pmatrix}\right) 
=
\psi(\tr (h \varpi^\beta  X w_n))\cdot
W\left( \begin{pmatrix}h&\\&{\bf1}\end{pmatrix} \xi t_\varpi^\beta\right),
\]
hence the zeta integral is supported over $\GL_n(F)\cap \varpi^{-\beta-\delta}M_n(\cO)$.  Also for $h\in\GL_{n}(F)$: 
\[
W\left(\begin{pmatrix}h&\\&{\bf1}\end{pmatrix}\xi t_\varpi^\beta\right)
 =  W\left(\begin{pmatrix}{\bf1}_n& h\\&{\bf1}_n\end{pmatrix}
\begin{pmatrix}h& \\&w_n\end{pmatrix}
 t_\varpi^\beta\right)
=  \psi(\tr h)\cdot W\left(\begin{pmatrix}h \varpi^\beta&\\&{\bf1}_n \end{pmatrix} \right).
\]
Using this and changing the variable $h\mapsto h \varpi^{-\beta'}$ with $\beta'=\beta+\delta$ yields
\begin{multline}\label{eq:zeta2}
\zeta(s; W(-\cdot \xi t_\varpi^\beta),\chi) \ = \\
\int_{\GL_n(F)\cap M_n(\cO)}
W\left(\left(\begin{smallmatrix}h\varpi^{-\delta}&\\&{\bf1}_n\end{smallmatrix}\right)\right)
 \psi(\tr (h\varpi^{-\beta'}))  (\chi|\cdot|^{s-\frac{1}{2}})(\det (h\varpi^{-\beta'})) dh.
\end{multline}

Denote by $(e_{ij})_{1\leqslant i,j\leqslant n}$ the standard basis of $M_n(\cO)$. 
Since $W$ is parahoric invariant, 
for any $i\neq j$ and $c\in \cO$, right translation by ${\bf1}_{n}+c e_{ij}\in \SL_n(\cO)$ in \eqref{eq:zeta2} yields: 
\begin{multline*}
\zeta(s; W(-\cdot \xi t_\varpi^\beta),\chi) \ = \\
\int dh \,W\left(\left(\begin{smallmatrix}h\varpi^{-\delta}&\\&{\bf1}_n\end{smallmatrix}\right)\right)
 \psi(\tr (h\varpi^{-\beta'}))  (\chi|\cdot|^{s-\frac{1}{2}})(\det (h\varpi^{-\beta'})) 
 \int_{\cO} dc\,\psi(c h_{ji}\varpi^{-\beta'}), 
\end{multline*}
and observe that $\int_{\cO} \psi(c h_{ji}\varpi^{-\beta'})dc=0$ unless $h_{ji}\in \cP^{\beta}$. 

Similarly right translation by ${\bf1}_{n}+ (c-1)e_{ii}$ with  $c\in \cO^\times$ shows that \eqref{eq:zeta2} equals: 
\[\int W\left(\left(\begin{smallmatrix}h&\\&{\bf1}_n\end{smallmatrix}\right)\right)
 \psi((\tr (h)-h_{ii})\varpi^{-\beta'}))  (\chi|\cdot|^{s-\frac{1}{2}})(\det (h\varpi^{-\beta'})) 
 \left(\int_{\cO^\times}
 \psi(c h_{ii} \varpi^{-\beta'})  \chi(c)d^\times c \right) dh,\]
  and $\left(\int_{\cO^\times}  \psi(c h_{ii} \varpi^{-\beta'})  \chi(c)d^\times c \right)=0$ unless $h_{ii}\in \cO^\times$
as  $\beta\geqslant 1$ equals  the conductor of $\chi$.

Therefore one can further restrict the domain of integration in \eqref{eq:zeta2}  to the  congruence subgroup 
$\ker\left(\GL_n(\cO)\to \GL_n(\cO/\cP^{\beta})\right)\cdot T_n(\cO),$ which by the Iwahori decomposition,  
may be identified to the product $N^{-}_n(\cP^{\beta})\times T_n(\cO)\times N_n(\cP^{\beta})$, where  $T_n$ denotes the diagonal subgroup of $\GL_n$ and
 $N_n$ denotes the  unipotent radical of the  standard Borel subgroup $B_n$.  Hence 
$$
\zeta(s; W(-\cdot \xi t_\varpi^\beta),\chi) \ = \ 
q^{\beta' n\left(s-\frac{1}{2}\right)}  W(t_\varpi^{-\delta})
\int_{N^{-}_n(\cP^{\beta})T_n(\cO)N_n(\cP^{\beta})}
 \psi(\tr (k\varpi^{-\beta'}))  \chi(\det (k\varpi^{-\beta'}))dk 
 $$
which can be simplified as 
\begin{multline*}
q^{\beta n\left(1-n\right)+\beta' n\left(s-\frac{1}{2}\right)}   W(t_\varpi^{-\delta})\prod_{1\leqslant i\leqslant n}
\int_{\cO^\times}
 \psi(t_i \varpi^{-\beta'})  \chi(t_i \varpi^{-\beta'})d^\times t_i \ = \\ 
 q^{\beta n\left(1-n\right)+(\beta+\delta) n\left(s-\frac{1}{2}\right)} 
 W(t_\varpi^{-\delta})\cdot \cG(\chi)^n, 
\end{multline*}
  as desired.  \end{proof}

\subsection{Non-vanishing of a local twisted zeta integral} \label{sec:nonvanising}
In order to ensure the non-vanishing of the  local twisted  Shalika integral in Proposition \ref{prop:localbirch}, which is
crucial for our applications,  one has to exhibit a  parahoric-spherical Shalika function $W$ on $G$ such  that $W(t_\varpi^{-\delta}) \neq 0$.
Assume that $\Pi$ is a spherical representation isomorphic to $\Ind_{B}^{G}(|\cdot|^{\frac{2n-1}{2}} \lambda)$ as in \eqref{eq:spherical} and let 
$\alpha_{i}=\lambda_i(\varpi)$, $1\leqslant i\leqslant 2n$.  Consider an unramified character $\eta$ of $F^\times$.

\begin{definition}\label{Q-regular}
Let $\tau\in W_G/W_H$ thought of as an $n$-element subset of $\{1,\dots, 2n\}$ (see \eqref{eq:tau}). 
We say that $\widetilde\Pi=(\Pi, \tau)$ is {\it $Q$-regular}  if it satisfies the following two conditions:
\begin{enumerate}
\item $q^{\frac{n(1-n)}{2}}  \prod_{i\in \tau} \alpha_{i}$ is a simple
eigenvalue for  $U_\cP=[J t_\varpi J ]$ acting on $ \Pi^{J }$, 
\smallskip
\item there exists $\rho\in \mathfrak{S}_{2n}$ such that for all $i\in \tau,$ $\rho(i)\notin \tau$  and 
$\alpha_{i} \alpha_{\rho(i)} = q^{2n-1} \eta(\varpi).$   
\end{enumerate}
\end{definition}

 Assume that  $\widetilde\Pi=(\Pi, \tau)$ is $Q$-regular.  Then  (i) together with Lemma \ref{lemma-Jacquet}  implies
\begin{equation}
\label{eqn:Q-reg}
 \prod\limits_{i\in \tau, j\notin \tau} ( \alpha_{i} - \alpha_{j})\neq 0, 
 \end{equation}
 while (ii) implies  by \cite[Prop. 1.3]{ash-ginzburg} that $\Pi$   admits  a $(\eta,\psi)$-Shalika model.

Without loss of generality assume from now on that $\tau=\{n+1,\dots,2n\}$ and that 
$\rho\in \mathfrak{S}_{2n}$  is the order $2$ element such that $\rho(i)=n+i$ for all $1\leqslant i\leqslant n$. 
In     \cite[(1.3)]{ash-ginzburg} the authors construct an $(\eta, \psi)$-Shalika functional on $\Pi$ 
sending $f\in  \Ind_{B_{2n}}^{\GL_{2n}}(|\cdot|^{\frac{2n-1}{2}} \lambda)$ to 
\begin{equation} \label{eqn:shalika-integral}
\cS(f)(g)  =   \int\limits_{B_n \backslash \GL_n} \int\limits_{M_n} 
f\left(\left( \begin{smallmatrix}  & {\bf 1}_n \\ {\bf 1}_n & X \end{smallmatrix} \right)
\left( \begin{smallmatrix} h &  \\ & h  \end{smallmatrix} \right) g \right)
\eta^{-1}(\det(h)) \bar\psi({\rm tr}(X)) \, dX\, dh
\end{equation}

By \cite[Lem. 1.5]{ash-ginzburg}, this integral converges in a certain domain and, when multiplied by \eqref{eqn:Q-reg},
can be  analytically  continued to $\C^{2n}$,  thus makes sense whenever \eqref{eqn:Q-reg} is non-zero. 
Let $f_0 \in\Ind_{B}^{G}(|\cdot|^{\frac{2n-1}{2}} \lambda)$ be the unique parahoric-spherical  function 
supported  on $B w_{2n} J $ and characterized by $f_0\left(\left( \begin{smallmatrix}  {\bf 1}_n  &\\ & \varpi^{-\delta}{\bf 1}_n \end{smallmatrix} \right)w_{2n}\right) = q^{-\delta n^2} $.  The following analogue of \cite[Lem. 1.4]{ash-ginzburg} holds.

\begin{lemma} \label{lem:shalika-eigenvector}
Let $W=\cS(f_0)$.  Then  $W(t_\varpi^{-\delta}) =1$. Moreover
$U_\cP\cdot f_0= q^{\frac{n(1-n)}{2}}(\prod_{i=n+1}^{2n} \alpha_i) f_0$. 
\end{lemma}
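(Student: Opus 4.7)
My plan is to establish the two assertions separately.  For the value $W(t_\varpi^{-\delta}) = 1$, I would unfold the Shalika integral \eqref{eqn:shalika-integral}.  The element $t_\varpi^{-\delta} = \iota(\varpi^{-\delta}{\bf 1}_n,{\bf 1}_n)$ is central in the Levi of $Q$, hence commutes with $\iota(h,h)$; moreover, for $h \in K_n := \GL_n(\cO)$ the element $\iota(h,h)$ lies in $J$, and right $J$-invariance of $f_0$ removes the $h$-dependence of the integrand.  The Iwasawa decomposition $\GL_n(F) = B_n(F)\cdot K_n$, together with $\eta$ being unramified (so $\eta^{-1}(\det h) = 1$ on $K_n$), reduces the outer integral to a single factor over $B_n(\cO)\backslash K_n$ of normalized volume one, leaving
\[
W(t_\varpi^{-\delta}) \;=\; \int_{M_n(F)} f_0\!\left(\begin{pmatrix} 0 & {\bf 1}_n \\ \varpi^{-\delta}{\bf 1}_n & X \end{pmatrix}\right)\bar\psi(\tr X)\,dX.
\]

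This remaining integral is treated by Bruhat analysis.  The elementary identity (using $w_n^2 = {\bf 1}_n$)
\[
\begin{pmatrix} 0 & {\bf 1}_n \\ \varpi^{-\delta}{\bf 1}_n & X \end{pmatrix}
\;=\;
\iota({\bf 1}_n, \varpi^{-\delta}{\bf 1}_n)\,w_{2n}\,\iota(w_n,w_n)\begin{pmatrix} {\bf 1}_n & \varpi^\delta X \\ 0 & {\bf 1}_n \end{pmatrix}
\]
exhibits the matrix as lying in $B w_{2n} J$ whenever $X \in L := \varpi^{-\delta} M_n(\cO)$, since then the last factor belongs to $J$.  Conversely, an attempt to write the matrix as $b \cdot w_{2n} \cdot j$ with $b$ upper triangular and $j \in J$ is equivalent (after right multiplication by $w_{2n}^{-1}$) to solving $Dc = X w_n^{-1}$, $Dd = \varpi^{-\delta}w_n^{-1}$ with $D \in B_n(F)$ upper triangular, $c \in M_n(\cO)$, and $d \in \GL_n(\cO)$; the second equation forces $d \in B_n(\cO)w_n^{-1}$, whence $D \in \varpi^{-\delta}B_n(\cO)$, and then the first yields $c = \varpi^\delta u X w_n^{-1}$ for some $u\in B_n(\cO)$, so $c\in M_n(\cO)$ compels $X\in L$.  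Thus $f_0$ vanishes off $L$.  On $L$ the defining normalization gives $f_0 = q^{-\delta n^2}$, and the phase $\bar\psi(\tr X) = 1$ since $\tr X \in \varpi^{-\delta}\cO$ while $\psi$ has conductor $\varpi^{-\delta}\cO$; with $\vol(L) = q^{\delta n^2}$, the integral equals $q^{-\delta n^2}\cdot q^{\delta n^2} = 1$.

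For the eigenvalue identity, I invoke Lemma~\ref{lemma-Jacquet}.  The Bruhat decomposition \eqref{eq:bruhat} supplies a basis of $\Pi^J$ consisting of the functions $\{f_\rho\}_{\rho \in W_G/W_H}$ supported on the individual cells $B\rho J$.  Under the Jacquet functor $\Pi \to \Pi_U$, each $f_\rho$ maps into the direct summand of $(\Pi_U)^{\rm ss}$ indexed by $\rho W_H$, which is a $U_\cP$-eigenspace with eigenvalue $q^{n(1-n)/2}\alpha^\tau$ for $\tau = \{\rho(1),\dots,\rho(n)\}$ via \eqref{eq:tau}.  Specializing to $\rho = w_{2n}$ gives $\tau = \{w_{2n}(1),\dots,w_{2n}(n)\} = \{n+1,\dots,2n\}$, and since $f_0$ is by construction a nonzero scalar multiple of $f_{w_{2n}}$, one concludes $U_\cP f_0 = q^{n(1-n)/2}\prod_{i=n+1}^{2n}\alpha_i\cdot f_0$.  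The main obstacle I anticipate is the converse direction of the Bruhat analysis in the second paragraph: ensuring the necessity of $X \in L$ and tracking the Haar-measure normalizations so that the answer is exactly $1$ rather than an unspecified constant; the cell-versus-Jacquet-module compatibility used in the third paragraph is standard but must, when $\Pi$ is non-regular, be interpreted at the level of semi-simplifications as in Lemma~\ref{lemma-Jacquet}.
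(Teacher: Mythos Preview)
Your treatment of $W(t_\varpi^{-\delta})=1$ is correct and essentially the same as the paper's: both reduce to the Bruhat support condition for $f_0$ on the matrix $\left(\begin{smallmatrix} 0 & {\bf1}_n \\ {\bf1}_n & X'\end{smallmatrix}\right)$ (the paper performs the change of variable $X\mapsto\varpi^{-\delta}X$ first, you carry the factor $\varpi^{-\delta}$ through the Bruhat analysis), and then match volumes.

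For the eigenvalue assertion, however, your argument has a genuine gap. The claim that under $\Pi\to\Pi_U$ each cell-supported function $f_\rho$ lands in the single summand of $(\Pi_U)^{\rm ss}$ indexed by $\rho W_H$ is false in general, even when $\Pi$ is regular. Already for $\GL_2$ with Iwahori level, the identity-cell function $f_1$ is \emph{not} a $U_\varpi$-eigenvector: one computes $(U_\varpi f_1)(w)\neq 0$ while $f_1(w)=0$. The cell basis and the eigenbasis of $\Pi^J$ are related by a unitriangular matrix with respect to the Bruhat order, not by the identity. What is true, and what you actually need, is only the special case $\rho=w_{2n}$: the functions in $\Pi$ supported on the \emph{open} cell $Bw_{2n}Q$ form a submodule whose image in $\Pi_U$ is precisely the bottom piece of the Bernstein--Zelevinsky filtration, hence lies in a single summand. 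If you want to keep a Jacquet-module argument you must isolate and justify this open-cell phenomenon rather than assert it for all $\rho$.

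The paper avoids this entirely by a direct double-coset computation: writing $Jt_\varpi J=\bigsqcup_{m\in M_n(\cO/\cP)}\left(\begin{smallmatrix}{\bf1}_n&m\\&{\bf1}_n\end{smallmatrix}\right)t_\varpi J$, one checks for each Weyl representative $\rho$ whether $\rho\left(\begin{smallmatrix}{\bf1}_n&m\\&{\bf1}_n\end{smallmatrix}\right)t_\varpi\in Bw_{2n}J$, finding this forces $\rho=w_{2n}$ and $m\in M_n(\cP)$, from which the eigenvalue drops out immediately.
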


\begin{proof}
By the Iwasawa decomposition  $\GL_n = B_n K_n$ and as $\iota(K_n,K_n) \subset J $  we see that 
\begin{multline*}
W(t_\varpi^{-\delta})= \cS(f_0)(t_\varpi^{-\delta}) = 
\int_{M_n} 
f_0\left(\left( \begin{smallmatrix}  & {\bf 1}_n \\ {\bf 1}_n & X \end{smallmatrix} \right) t_\varpi^{-\delta} \right)
\bar\psi({\rm tr}(X)) \, dX \ = \\
\ = \ \int_{M_n} 
f_0\left(
\left( \begin{smallmatrix}  {\bf 1}_n  &\\ & \varpi^{-\delta}{\bf 1}_n \end{smallmatrix} \right)
\left( \begin{smallmatrix}  & {\bf 1}_n \\ {\bf 1}_n & \varpi^{\delta}X \end{smallmatrix} \right) \right)
\bar\psi({\rm tr}(X)) \, dX \ = \\
\ = \ f_0\left(\left( \begin{smallmatrix}  {\bf 1}_n  &\\ & \varpi^{-\delta}{\bf 1}_n \end{smallmatrix} \right)w_{2n}\right) q^{\delta n^2} 
\int_{M_n} 
f_0\left(
\left( \begin{smallmatrix}  & {\bf 1}_n \\ {\bf 1}_n & X \end{smallmatrix} \right) \right)
\bar\psi({\rm tr}(\varpi^{-\delta}X)) \, dX=1. 
\end{multline*}
One checks that  $
\left( \begin{smallmatrix}  & {\bf 1}_n \\ {\bf 1}_n & X \end{smallmatrix} \right)
 \in  Bw_{2n} J $ if and only if $X \in M_n(\cO)$,  in which case $\psi({\rm tr}(\varpi^{-\delta}X))=1$.
 The parahoric  decomposition     of $J=(J\cap U^-)(J \cap Q)=(J \cap U^-)(J \cap Q)$
implies  
\begin{equation}
J t_\varpi J=\bigsqcup_{m\in M_n(\cO/\cP)} \left(\begin{smallmatrix} {\bf1}_n &m \\ &{\bf1}_n \end{smallmatrix} \right)t_\varpi  J.  
\end{equation}
By  \eqref{eq:bruhat} it suffices to compute  $(U_\cP\cdot f_0)(\rho)$ for all $\rho\in W_G$.  By the above decomposition 
\[(U_\cP\cdot f_0)(\rho)=\sum_{m\in M_n(\cO)/M_n(\cP)} f_0\left(\rho \left(\begin{smallmatrix} \varpi{\bf1}_n &m \\ &{\bf1}_n \end{smallmatrix} \right)\right). \]
Note that  $\rho \left(\begin{smallmatrix} {\bf1}_n &m \\ &{\bf1}_n \end{smallmatrix} \right)t_\varpi $ belongs to the support 
 $B w_{2n} J = B w_{2n} t_\varpi J = B w_{2n}  J^- t_\varpi  $ of $f_0$ if and only if 
$\rho \left(\begin{smallmatrix} {\bf1}_n &m \\ &{\bf1}_n \end{smallmatrix} \right)\in K\cap  B w_{2n}  J^- =
(K\cap  B) w_{2n}  J^-=w_{2n}  J^- $ (see  \eqref{eq:bruhat}) which implies $\rho=w_{2n}$ and $m\in M_n(\cP)$. 
Hence $(U_\cP\cdot f_0)(\rho)=0$ for all  $\rho\neq w_{2n}$, while  
 $(U_\cP\cdot f_0)(w_{2n})=f_0\left(w_{2n} \left(\begin{smallmatrix} \varpi{\bf1}_n & \\ &{\bf1}_n \end{smallmatrix} \right)\right)
 =f_0\left(\left(\begin{smallmatrix} {\bf1}_n & \\ &\varpi {\bf1}_n \end{smallmatrix} \right)w_{2n} \right)=
  q^{\frac{n(1-n)}{2}}(\prod_{i=n+1}^{2n} \alpha_i)  f_0(w_{2n}) $.
\end{proof}

\section{\bf $L$-functions for $\GL_{2n}$}

\subsection{Global Shalika models and periods}
\label{sec:shalika}

This subsection contains a brief review of the necessary ingredients from \cite{GR-ajm} and a discussion involving $p$-adically integrally refined Betti--Shalika periods. 
Henceforth, $\Pi$ will stand for a (not necessarily unitary) cuspidal automorphic representation of 
$G(\A) = \GL_{2n}(\A_F)$.  
Keeping multiplicity one for $\GL_{2n}$ in mind, we will let $\Pi$ also stand for its representation space  within the space of cusp forms for $G(\A)$. 
Fix the non-trivial  additive unitary character  
$\psi: \A_{F}/F\longrightarrow \A/\Q \longrightarrow \C^\times$
where the first map is  the trace, whereas the second is the
usual additive character $\psi_0$ on $\A/\Q$ 
characterized by $\ker(\psi_0|_{\Q_\ell})=\Z_\ell$ for every prime
number $\ell$ and $\psi_0|_{\R}(x)=\exp(2\pi ix)$. 
We remark that $(\varpi_{v}^{-\delta_v})$, where $\delta_v $ is 
the valuation at $v$ of the different
$\mathfrak{d}$ of $F$, is the largest ideal
contained in $\ker(\psi_v)$. The discriminant  of $F$ is $\N_{F/\Q}(\mathfrak{d})$.

\subsubsection{\bf Global Shalika models}
\label{sect:globalShalikamodels}
 Let $\eta : F^\times \backslash 
\A_F^\times \to \C^\times$ be a Hecke character
such that $\eta^n$ equals the   central character $\omega_\Pi$ of $\Pi$. We get an automorphic character:
$$\eta \otimes \psi: S(F) \backslash S(\A_F)\to \C^\times, \quad \left( \begin{array}{ccc}
h &  hX\\
0 &  h
\end{array}\right) \mapsto  \eta(\det(h))\psi(Tr(X)).$$

For a cusp form $\varphi\in\Pi$ and $g\in G(\A)$ consider the integral
\begin{equation}\label{eq:shalika-functional}
W^{\eta}_{\varphi}(g)=\int_{Z(\A)S(F)\backslash S(\A_F)} \varphi(sg)(\eta \otimes \psi)^{-1}(s) ds,
\end{equation}
where   Haar measures  are normalized as in  \cite[\S2.8]{GR-ajm}.
It is well-defined by the cuspidality of the function $\varphi$ (see \cite[\S8.1]{jacshal}) and hence yields a function
$W^{\eta}_{\varphi}: G(\A)\rightarrow\C$ such that 
$$
W^{\eta}_{\varphi}(sg) = (\eta\otimes\psi)(s)\cdot W^{\eta}_{\varphi}(g),
$$
for all $g\in G(\A)$ and $s\in S(\A)$. In particular, we obtain an intertwining of $G(\A)$-modules
\begin{equation}\label{eq:shalika-intertwining}
\cS^\eta_\psi: \Pi\rightarrow \textrm{Ind}_{S(\A)}^{G(\A)}(\eta\otimes\psi), \quad \varphi\mapsto W^{\eta}_{\varphi}. 
\end{equation}
The following theorem, due to Jacquet and Shalika, gives a necessary and sufficient conditions for the existence of 
a non-zero intertwining as in \eqref{eq:shalika-intertwining}.

\begin{theorem}{\rm(\cite[Thm. 1]{jacshal})}
\label{thm:JS}
The following assertions are equivalent:
\begin{enumerate}
\item[(i)] There exists $\varphi\in\Pi$  such that $W^{\eta}_{\varphi} \neq 0$.
\item[(ii)] There exists an  injection of $G(\A)$-modules 
$\Pi\hookrightarrow \textrm{\emph{Ind}}_{S(\A)}^{G(\A)}(\eta\otimes\psi)$.
\item[(iii)] 
The twisted partial exterior square $L$-function $\prod_{v\notin \Sigma_\Pi} L(s, \Pi_v, \wedge^2 \otimes\eta^{-1}_v)$
has a pole at $s=1$, where $\Sigma_\Pi$ is the set of places where  $\Pi$ is ramified. 
\end{enumerate}
\end{theorem}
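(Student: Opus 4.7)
I would establish the two equivalences (i) $\Leftrightarrow$ (ii) and (ii) $\Leftrightarrow$ (iii) separately. The first is largely formal once the Shalika integral is in place and one invokes a local multiplicity-one statement, while the second is the substantive content of the theorem and requires the global integral representation of the twisted exterior square $L$-function.

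The implication (i) $\Rightarrow$ (ii) is immediate: the formula \eqref{eq:shalika-functional} exhibits $\cS^\eta_\psi$ as an explicit $G(\A)$-equivariant map which is non-zero by hypothesis. For the converse, I would appeal to the local uniqueness of $(\eta_v,\psi_v)$-Shalika functionals at every place $v$, a local representation-theoretic result which globalizes to
\begin{equation*}
\dim \Hom_{G(\A)}\!\left(\Pi,\Ind_{S(\A)}^{G(\A)}(\eta\otimes\psi)\right)\leqslant 1,
\end{equation*}
so any abstract non-zero intertwining is necessarily a non-zero scalar multiple of $\cS^\eta_\psi$. Therefore (ii) forces $\cS^\eta_\psi\neq 0$, hence (i).

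For (ii) $\Leftrightarrow$ (iii), the plan is to follow the Jacquet-Shalika method and introduce a global zeta integral of the form
\begin{equation*}
Z(s,\varphi,\Phi,\eta)\ =\ \int_{Z(\A)H(F)\backslash H(\A)} \varphi(\iota(g_1,g_2))\,E(\iota(g_1,g_2);s,\Phi,\eta)\,dg_1\,dg_2,
\end{equation*}
where $\Phi$ is a Schwartz-Bruhat function on $\A_F^n$ and $E(\,\cdot\,;s,\Phi,\eta)$ is the mirabolic Eisenstein series on $H(\A)$ attached to $\Phi$ and $\eta$. The strategy then has two steps: (a) \emph{unfolding}, where for factorizable data and $\Re(s)\gg 0$ one rewrites $Z$ in terms of the Shalika integral \eqref{eq:shalika-functional}, so that $Z$ factors as a product of local Shalika zeta integrals whose unramified factors are computed to be $L(s,\Pi_v,\wedge^2\otimes\eta^{-1}_v)$; in particular, if (ii) fails, then $Z\equiv0$; (b) \emph{pole analysis}, where one shows that the Eisenstein series has a simple pole at $s=1$ whose residue is a non-zero constant times $\Phi(0)$, so that the residue of $Z$ at $s=1$ is a non-zero multiple of a Shalika-type period of $\varphi$. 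Combining (a) and (b), the partial exterior square $L$-function has a pole at $s=1$ if and only if this residue is non-zero for some choice of data, which is equivalent to (ii).

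The main obstacle will be the rigorous execution of the unfolding in (a) and the residue computation in (b). The key technical points are: justifying the interchange of sums and integrals in the Bruhat-cell decomposition underlying the unfolding (which relies on cuspidality of $\varphi$ to kill boundary contributions and on absolute convergence in a suitable right half-plane); matching the local unramified factors with $L(s,\Pi_v,\wedge^2\otimes\eta^{-1}_v)$ by an explicit Satake computation; and controlling the ramified local integrals as holomorphic, generically non-zero multiples of the local $L$-factors, so that no spurious poles or cancellations are introduced at $s=1$. Once these points are in place, the pole at $s=1$ of the partial exterior square $L$-function translates directly into non-vanishing of $\cS^\eta_\psi$ on some cusp form in $\Pi$.
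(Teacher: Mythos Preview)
The paper does not actually prove this theorem; it is quoted from Jacquet--Shalika \cite{jacshal}, with only the remark that the extension from the unitary to the non-unitary case is easy. So there is no proof in the paper to compare against, and your outline is essentially a sketch of what one finds in \cite{jacshal}.

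That said, two points in your sketch deserve correction. First, for (ii) $\Rightarrow$ (i) you invoke local multiplicity one of Shalika functionals. This is valid today (Nien \cite{nien} at finite places, Chen--Sun \cite{chen-sun} at archimedean places), but those results postdate \cite{jacshal}; the original argument does not go this way, and in any case the equivalence (i) $\Leftrightarrow$ (iii) that you prove in the second half already yields (ii) $\Rightarrow$ (i) via (ii) $\Rightarrow$ (iii) $\Rightarrow$ (i), once one notes (as you do) that (ii) failing forces $Z\equiv 0$. Second, and more substantively, your description of the unfolding is not quite right: in the Jacquet--Shalika integral one unfolds using the \emph{Whittaker} expansion of $\varphi$, not a Shalika model. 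The Euler product one obtains for $\Re(s)\gg 0$ consists of local integrals built from Whittaker functions, whose unramified factors are then computed to be $L(s,\Pi_v,\wedge^2\otimes\eta_v^{-1})$. The Shalika period
\[
\int_{Z(\A)S(F)\backslash S(\A)}\varphi(s)\,(\eta\otimes\psi)^{-1}(s)\,ds
\]
enters only as the \emph{residue} of the global integral at $s=1$, coming from the constant term of the Eisenstein series. With step (a) rewritten as ``unfold via the Whittaker model to an Euler product with unramified factor $L(s,\Pi_v,\wedge^2\otimes\eta_v^{-1})$'' and step (b) as ``the residue at $s=1$ is a non-zero multiple of the Shalika period of $\varphi$'', your plan matches \cite{jacshal}.
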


This is proved in \cite{jacshal} for unitary representations and its extension to the non-unitary case is easy.
If $\Pi$ satisfies any one, and hence all, of the equivalent conditions of Theorem~\ref{thm:JS}, then we say that $\Pi$  has an {\it $(\eta,\psi)$-Shalika model},  and
we call the isomorphic image $\cS^\eta_\psi(\Pi)$ of $\Pi$ under \eqref{eq:shalika-intertwining} a {\it global $(\eta,\psi)$-Shalika model} of $\Pi$.  Then clearly  $\Pi\otimes \chi$  has an $(\eta\chi^2,\psi)$-Shalika model for any  Hecke character $\chi$, by keeping the same model and only twisting the action.

The following proposition gives another equivalent condition for $\Pi$ to have a global Shalika model. 

\begin{prop}[Asgari--Shahidi \cite{asgari-shahidi}] 
\label{prop:selfdual}
Let $\Pi$ be a cuspidal automorphic representation of $\GL_{2n}(\A_F)$ with central character $\omega_\Pi$. Then the following assertions are equivalent:
\begin{enumerate}
\item[(i)] $\Pi$ has a global $(\eta,\psi)$-Shalika model for some character $\eta$ satisfying $\eta^n=\omega_\Pi$.
\item[(ii)] $\Pi$ is the transfer of a globally generic cuspidal automorphic representation $\pi$ of 
${\rm GSpin}_{2n+1}(\A_F)$.
\end{enumerate}
In particular, if any of the above equivalent conditions is satisfied, then $\Pi$ is essentially self-dual, {\it i.e.,} $\Pi\cong \Pi^\vee\otimes \eta$. The character $\eta$ may be taken to be the central character  of $\pi$.
\end{prop}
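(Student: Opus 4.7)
The plan is to use Theorem~\ref{thm:JS} to convert the existence of a Shalika model into an analytic statement about the exterior square $L$-function, and then match this with the behavior of $L$-functions of transfers from $\mathrm{GSpin}_{2n+1}$.

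For the direction (ii)$\Rightarrow$(i), assume $\Pi$ is the weak transfer of a globally generic cuspidal $\pi$ of $\mathrm{GSpin}_{2n+1}(\A_F)$ with central character $\eta$. The dual group of $\mathrm{GSpin}_{2n+1}$ is $\mathrm{GSp}_{2n}(\C)$, and the transfer is induced by the standard embedding $\mathrm{GSp}_{2n}(\C)\hookrightarrow \GL_{2n}(\C)$. Under this embedding, the exterior square representation of $\GL_{2n}(\C)$ decomposes (as a representation of $\mathrm{GSp}_{2n}(\C)$) as the direct sum of the trivial-by-similitude character and an irreducible $(2n^2-n-1)$-dimensional constituent. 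Consequently, at unramified places the local $L$-factor factorizes as
\[
L(s,\Pi_v,\wedge^2\otimes \eta_v^{-1}) \;=\; \zeta_{F_v}(s)\cdot L(s,\pi_v,r_v),
\]
for an appropriate representation $r$ of the $L$-group. Taking the partial product over $v\notin\Sigma_\Pi$, the zeta factor produces a simple pole at $s=1$, while the remaining factor is non-vanishing there by known results of Shahidi on Langlands--Shahidi $L$-functions for generic cuspidal representations. Hence the partial twisted exterior square $L$-function has a pole at $s=1$, and (i) follows from Theorem~\ref{thm:JS}.

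For the direction (i)$\Rightarrow$(ii), assume by Theorem~\ref{thm:JS} that $\prod_{v\notin\Sigma_\Pi} L(s,\Pi_v,\wedge^2\otimes\eta_v^{-1})$ has a pole at $s=1$. The key step is to invoke the backward/descent construction of Ginzburg--Rallis--Soudry (as generalized to the $\mathrm{GSpin}$ setting in the work cited). One constructs out of $\Pi$ and $\eta$ an automorphic representation $\pi$ of $\mathrm{GSpin}_{2n+1}(\A_F)$ via a residue/theta-type integral whose non-vanishing is precisely encoded by the pole of the exterior square; the Shalika functional on $\Pi$ provides the requisite period. One then checks that $\pi$ is cuspidal and globally generic, and that its weak transfer is $\Pi$. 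This is the main obstacle in the proof: verifying that the descent produces a nonzero cuspidal generic representation, and that the transfer recovers $\Pi$, relies on a careful unfolding of period integrals and compatibility of local lifts, carried out in Asgari--Shahidi \cite{asgari-shahidi}.

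Finally, essential self-duality is immediate from either description. Indeed, since the image of $\mathrm{GSp}_{2n}(\C)\hookrightarrow \GL_{2n}(\C)$ preserves a symplectic form up to the similitude character, the standard representation is self-dual up to the similitude character. Translating through the Langlands correspondence at unramified places yields $\Pi_v^\vee \cong \Pi_v\otimes \eta_v^{-1}$; strong multiplicity one for $\GL_{2n}$ then upgrades this to the global isomorphism $\Pi^\vee\cong \Pi\otimes\eta^{-1}$, which (after relabeling $\eta\mapsto\eta^{-1}$ if desired, consistent with the convention in the statement) is the assertion $\Pi^\vee\cong\Pi\otimes \eta$.
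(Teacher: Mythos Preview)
The paper does not prove this proposition at all: it is stated with attribution to Asgari--Shahidi \cite{asgari-shahidi} and no proof is given. So there is nothing in the paper to compare your argument against.

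That said, your sketch is a reasonable outline of the argument one finds in the literature. The direction (ii)$\Rightarrow$(i) via the decomposition of $\wedge^2$ restricted to $\mathrm{GSp}_{2n}(\C)$, the resulting $\zeta_F$-factor and pole, combined with Theorem~\ref{thm:JS}, is correct. For (i)$\Rightarrow$(ii) you correctly identify the descent construction as the substantive input; be aware that in \cite{asgari-shahidi} the emphasis is on the forward transfer and the characterization of its image, with the descent ingredients imported from Ginzburg--Rallis--Soudry and Hundley--Sayag, so your citation of the ``main obstacle'' is accurate but the attribution is somewhat diffuse. Your self-duality argument is fine; just note that the sign/convention on $\eta$ versus $\eta^{-1}$ in $\Pi^\vee\cong\Pi\otimes\eta^{\pm 1}$ depends on how the Shalika character is normalized, so the parenthetical relabeling you flag is exactly the point one has to check rather than gloss over.
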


\subsubsection{\bf Period integrals and $L$-functions}
The following proposition,  due to Friedberg and Jacquet, is crucial for much that will follow. It relates the period-integral over $H$ of a cusp form $\varphi$ of $G$ to a certain zeta integral of the
function $W^{\eta}_{\varphi}$ in the Shalika model corresponding to $\varphi$ over one copy of $\GL_n$.

\begin{prop}\cite[Prop. 2.3]{friedjac} \label{prop:FJ}
Assume that  $\Pi$ has an $(\eta,\psi)$-Shalika model. 
For   $\varphi\in\Pi$ 
$$
\Psi(s,\varphi,\chi,\eta)=\int_{Z(\A)H(\Q)\backslash H(\A)} \varphi\left(\left( \begin{array}{ccc}
h_1 &  0\\
0 &  h_2
\end{array}\right)\right)(\chi | \cdot|^{s-\frac{1}{2}})\left(\frac{\det(h_1)}{\det(h_2)}\right)\eta^{-1}(\det(h_2)) dh_1 dh_2
$$
 converges absolutely for all $s\in\C$. For  $\Re(s)\gg 0$ it is equal to 
$$
\zeta(s;W^{\eta}_{\varphi},\chi)=\int_{\GL_n(\A_F)} W^{\eta}_{\varphi}\left(\left( \begin{array}{ccc}h &  0\\0 &  1 \end{array}\right)\right) \chi(\det(h)) |\det(h)|^{s-\frac{1}{2}} \, dh.
$$
thus providing  an analytic continuation of $\zeta(s;W^{\eta}_{\varphi},\chi)$ to all of  $\C$.
\end{prop}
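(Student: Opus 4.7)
The statement has three components: absolute convergence of $\Psi(s,\varphi,\chi,\eta)$ for all $s \in \C$, the identity $\Psi(s) = \zeta(s;W^\eta_\varphi,\chi)$ for $\Re(s)\gg 0$, and the resulting analytic continuation.  The plan is to obtain absolute convergence from rapid decay of cusp forms, prove the identity by the Friedberg--Jacquet unfolding using the Shalika functional, and then read off the continuation.

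For convergence, I would use that since $\varphi$ is cuspidal on $G(\A)$, its restriction to $H(\A)$, viewed as the Levi of $Q$, is rapidly decreasing modulo $Z(\A) H(\Q)$ in Siegel coordinates on both $\GL_n$-factors; this follows from the vanishing of constant terms along the proper standard parabolics of $G$ contained in $B$.  The character $(\chi|\cdot|^{s-1/2})(\det h_1/\det h_2)\,\eta^{-1}(\det h_2)$ has only polynomial growth, so the integrand is absolutely integrable, uniformly on compacta in $s$, and $\Psi$ is entire as a function of $s$.

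For the identity in the half-plane $\Re(s) \gg 0$, I would start from $\zeta$, expand the Shalika functional that defines $W^\eta_\varphi$ via the parametrization $s_0 = \iota(a,a)\bigl(\begin{smallmatrix}\mathbf{1}&X\\0&\mathbf{1}\end{smallmatrix}\bigr) \in S(\A)$ with $a \in Z(\A)\GL_n(\Q)\backslash\GL_n(\A)$ and $X \in M_n(\Q)\backslash M_n(\A)$, and rearrange.  Applying Fubini (justified by absolute convergence for $\Re(s) \gg 0$) and then substituting $h \mapsto a^{-1}h$ in the outer $\GL_n(\A)$-integral, one obtains a triple integral of $\varphi$ evaluated on $\iota(h,a)\bigl(\begin{smallmatrix}\mathbf{1}&\cdot\\0&\mathbf{1}\end{smallmatrix}\bigr)$ against twisted characters of $a$, $h$ and $X$.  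Relabelling $(h, a) = (h_1, h_2)$, the inner $X$-integration is a Fourier coefficient of $\varphi$ along $U$: cuspidality of $\varphi$ kills the constant term, and the standard Piatetski--Shapiro-type vanishing kills all Fourier coefficients supported on matrices of rank $< n$.  The surviving rank-$n$ orbit is a principal homogeneous space for $\GL_n(\Q) \times \GL_n(\Q)$ with diagonal stabilizer, so the implicit $\GL_n(\Q)$-sum folds back to convert the unrestricted $h_1$-integral into the $H(\Q)$-quotient appearing in $\Psi$, yielding $\Psi(s,\varphi,\chi,\eta)$.

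Finally, since $\Psi$ is entire on $\C$ by the first step and coincides with $\zeta$ on a right half-plane, it provides the holomorphic continuation of $\zeta(s;W^\eta_\varphi,\chi)$ to all of $\C$.  The main obstacle is making the unfolding precise: verifying the vanishing of the non-generic Fourier coefficients of $\varphi$, where the Shalika model is used to identify the surviving generic coefficient with the Shalika functional, and carefully tracking the change of variables and measure normalization so that the $H(\Q)$-quotient comes out correctly.
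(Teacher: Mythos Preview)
The paper does not prove this proposition; it is quoted verbatim from Friedberg--Jacquet \cite[Prop.~2.3]{friedjac} and invoked as a black box. Your sketch is therefore not competing with any argument in the paper itself, but rather with the original proof in \cite{friedjac}, whose main ideas (rapid decay of cusp forms for convergence, Fourier expansion along $U$, the $\GL_n(F)\times\GL_n(F)$-orbit analysis on $M_n(F)$ with only the rank-$n$ orbit surviving by cuspidality) you correctly identify.

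One point to straighten out: you have the direction of the unfolding slightly reversed. The Fourier expansion of $\varphi$ along $U$ into characters $\psi_\xi$ indexed by $\xi\in M_n(F)$, the vanishing of the degenerate (rank $<n$) terms via cuspidality along the parabolics between $Q$ and $G$, and the collapsing of the $H(F)$-sum over the open orbit to its diagonal stabilizer---these steps belong to the passage from the period integral $\Psi$ to the zeta integral $\zeta$, not the other way. If instead you start from $\zeta$ and insert the definition of $W^\eta_\varphi$ as you propose, there is no Fourier sum to analyze: you have a single character $\psi$ from the outset, and the issue is rather a change of variables $(h,a)\mapsto(h_1,h_2)=(ha,a)$ on $\GL_n(\A)\times\bigl(Z_n(\A)\GL_n(F)\backslash\GL_n(\A)\bigr)$ together with left $\GL_n(F)$-invariance of the resulting integrand to reconstitute the $H(F)$-quotient. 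Either direction works, but your narrative mixes the two; reorganizing it as $\Psi\to\zeta$ (as in \cite{friedjac}) makes the role of the orbit analysis transparent.
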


Suppose the representation $\Pi$ of  $G(\A) = \GL_{2n}(\A_F)$ decomposes as $\Pi = \otimes'_v \Pi_v$, where $\Pi_v$ is an irreducible admissible representation of $\GL_{2n}(F_v)$. 

If $\Pi$ has a global Shalika model, then $\cS^\eta_\psi$ defines local Shalika models at every place (see Definition \ref{def:loca-shalika}). The corresponding local intertwining operators are denoted by $\cS^{\eta_v}_{\psi_v}$ and their images by $\cS^{\eta_v}_{\psi_v}(\Pi_v)$, whence $\cS^\eta_\psi(\Pi)=\otimes'_v \cS^{\eta_v}_{\psi_v}(\Pi_v)$.
We can now consider cusp forms $\varphi$ such that the function $W_{\varphi}\in \cS^\eta_\psi(\Pi)$ is factorizable as $W_\varphi=\otimes'_v W_{\varphi_v},$
where
$$
W_{\varphi_v}\in \cS^{\eta_v}_{\psi_v}(\Pi_v)\subset \textrm{Ind}_{S(F_v)}^{\GL_{2n}(F_v)}(\eta_v\otimes\psi_v).
$$
Then the  following  factorisation  holds for $\Re(s) \gg 0$:
\begin{equation}\label{eq:product-zeta}
 \zeta(s;W_{\varphi},\chi)=\prod_{v } \zeta_v(s;W_{\varphi_v},\chi_v), 
 \end{equation}
where the non-Archimedean local zeta integrals $\zeta_v(s;W_{\varphi_v},\chi_v)$ are
related  to $L$-functions in Proposition \ref{prop:FJL-fct}.

Proposition \ref{prop:FJ} relates this  Shalika zeta integral to a period integral over $H$, and 
 the main thrust of \cite{ash-ginzburg}, refined and generalized in \cite{GR-ajm}, is that 
the period integral over $H$ admits a cohomological interpretation, provided that $\Pi$ is of cohomological type.

\subsubsection{\bf Shalika models and  cuspidal cohomology}
\label{sect:cuspidal-coh}

In this paragraph we recall some well-known facts from Clozel~\cite[\S3]{Clo} (see also \cite[\S3.4]{GR-ajm}). Assume from now on that the cuspidal automorphic representation $\Pi$ is cohomological
with respect to a  dominant integral weight $\mu\in X_+^*(T)$ (see \eqref{eq:pure}), {\it i.e.},   
$$
\rH^q(\lieg_\infty,K_\infty^\circ ;\Pi\otimes V^\mu_{\C}) = 
\rH^q(\lieg_\infty,K_\infty^\circ ;\Pi_\infty\otimes V^\mu_{\C})\otimes\Pi_f \ \neq \ 0
$$
for some degree $q$. A necessary condition for the non-vanishing of this cohomology group is that the weight $\mu$ is pure, {\it i.e.}, 
$\mu\in X_0^*(T)$.  For each archimedean place $\sigma\in \Sigma_{\infty}$,  $\Pi_\sigma$ can be explicitly described as follows. 
 For any integer $\ell \geqslant 1$  consider  the unitary discrete series representation  $D(\ell)$ of $\GL_2(\R)$ of lowest non-negative 
${\rm SO}_2$-type $\ell+1$ and central character  $\mathrm{sgn}^{\ell+1}$.
Let $P$ be the parabolic subgroup of $\GL_{2n}$ with Levi factor $\prod_{i=1}^n \GL_2$.  Then 
$$
\Pi_\sigma\simeq\textrm{Ind}^{\GL_{2n}(\R)}_{P(\R)}
\left(\bigotimes_{i=1}^n D(2(\mu_{\sigma,i}+n-i)+ 1-\w)\otimes |\det|^{-\w/2}\right),
$$
in particular $\omega_{\Pi_\sigma}=|\cdot|^{-n \w}$.
The highest degree supporting cuspidal cohomology of $G$ is $t=|\Sigma_\infty|(n^2+n-1).$
For any character $\epsilon$ of $K_{\infty}/K_\infty^\circ $ 
the $\epsilon$-eigenspace of 
\begin{equation}\label{eq:gK-sigma}
  \rH^{t}(\lieg_\infty,K_\infty^\circ  ; \Pi_\infty \otimes V^{\mu}_{\C}) \ = \ 
  \Hom_{K_\infty^\circ }
  \left(\wedge^{t}(\lieg_\infty/\liek_\infty), \Pi_\infty \otimes V^{\mu}_{\C}\right)
  \end{equation}
is a line.   If in addition  $\Pi$  admits  an $(\eta,\psi)$-Shalika model then  $\eta$ is forced to be algebraic of  
the form $\eta = \eta_0 | \cdot |_F^{-\w}$ with $\eta_0$ of finite order, $\w$ is the purity weight of $\mu$ (see \cite[Thm.  5.3]{gan-raghuram}). 
Using multiplicity one theorem for local  Shalika models \cite{nien}  (see also \cite{chen-sun}), one deduces that for
any character $\epsilon$ of $K_{\infty}/K_\infty^\circ$ :
\[ \rH^{t}(\lieg_\infty,K_{\infty}^\circ ;\cS^{\eta_\infty}_{\psi_\infty}(\Pi_\infty)\otimes V^\mu_{\C})[\epsilon] \] 
is a line, a basis $\Xi_\infty^\epsilon$ of which we fix in way 
compatible  with twisting (see \cite[Lem. 5.1.1]{GR-ajm}).
The  relative Lie algebra cohomology of $\Pi$ as above  is a summand of the cuspidal cohomology which in turn injects into the cohomology with compact supports
(see \cite[\S2]{gan-raghuram})
\begin{equation}
  \label{eqn:gK-cohomology-to-compact-supports}
  \rH^{t}(\lieg_\infty,K_{\infty}^\circ ;\Pi^K\otimes V^\mu_{\C}) \ \hookrightarrow \ 
  \rH^{t}_{\rm cusp}(\SGK, \cV_{\C}^\mu)  \ \hookrightarrow \ 
  \rH^{t}_c(\SGK, \cV_{\C}^\mu). 
\end{equation}

  We  define an isomorphism
$\Theta^\epsilon$ of $G(\A_f)$-modules as the composition 
\begin{align}\label{eqn:theta-epsilon}
\cS^{\eta_f}_{\psi_f}(\Pi_f)  \xrightarrow{\sim} &
\cS^{\eta_f}_{\psi_f}(\Pi_f) \otimes
\rH^{t}(\lieg_\infty,K_{\infty}^\circ ; \cS^{\eta_{\infty}}_{\psi_{\infty}}(\Pi_\infty) \otimes V^\mu_{\C})[\epsilon]   \xrightarrow{\sim}  \nonumber \\
\xrightarrow{\sim} &\rH^{t}(\lieg_\infty,K_{\infty}^\circ ; \cS^{\eta}_{\psi}(\Pi) \otimes V^\mu_{\C})[\epsilon] \xrightarrow{\sim} 
\rH^{t}(\lieg_\infty,K_{\infty}^\circ ; \Pi \otimes V^\mu_{\C})[\epsilon],
\end{align}
where the first map is $W_f \mapsto W_f \otimes \Xi_\infty^\epsilon$, the 
 second map is the natural one and the third map is the map induced in cohomology
by $(\cS^\eta_\psi)^{-1}$ from \eqref{eq:shalika-intertwining}. Taking $K$-invariants in  \eqref{eqn:theta-epsilon} and composing with 
 \eqref{eqn:gK-cohomology-to-compact-supports} yields a Hecke equivariant embedding 
 \begin{align}\label{eqn:theta-epsilon-K}
\Theta^\epsilon_K: \cS^{\eta_f}_{\psi_f}(\Pi_f)^K \hookrightarrow  \rH^{t}_c(\SGK, \cV_{\C}^\mu)[\epsilon]. 
\end{align}

The reader should appreciate that  the analytic condition on $\Pi$ of admitting a Shalika model and  the algebraic condition of contributing to the cuspidal cohomology of $G$  are of  entirely different nature.  One may construct examples of representations satisfying only one of these conditions and not the other (see \cite[\S 3.5]{GR-ajm}).

\subsection{Ordinarity and  regularity}
\label{sec:local-aspects-ordinary-zeta}

For $\gp$ dividing $p$, we let $\varpi_\gp$ denote an uniformizer of $F_\gp$ and let 
 $q_\gp=\left|\varpi_\gp \right|_p^{-1}$ denote the cardinality of its residue field. 
Let  $\Sigma_\infty=\coprod_{\gp\mid p}  \Sigma_\gp$ be the partition induced by $i_p:\bar\Q\hookrightarrow \bar\Q_p$, where
 $\Sigma_\gp=\{\sigma:F_\gp\hookrightarrow \bar\Q_p\}$.
 
Recall from \S\ref{sec:weights-repns} that a weight $\mu \in X^*_+(T) $ yields a rational character $\mu:T=\Res_{F/\Q}T_{2n}\to\GL_1$, therefore induces a character
  $\mu_p=\otimes_{\gp\mid p}\mu_\gp$ of $T(\Q_p) = \prod_{\gp \mid p} T_{2n}(F_\gp)$,  
  where 
  \begin{equation}
  \begin{split}  
      &\mu_\gp:T_{2n}(F_\gp)\to \bar\Q_p^\times \text{ is given by }  (\mu_{\sigma})_{\sigma\in \Sigma_\gp}
  \text{  subject to the dominance condition }  \\
   &\mu_{\sigma,1}\geqslant  \mu_{\sigma,2}\geqslant \dots \geqslant \mu_{\sigma,2n},  \text{ for all }   \sigma\in \Sigma_\gp. 
   \end{split}
  \end{equation}

Recall the maximal $(n,n)$-parabolic  subgroup $Q\subseteq  G$. 
Given a cuspidal automorphic representation $\Pi$ of $G(\A)$ that is cohomological with respect to the  weight  $\mu
 \in X^*_+(T)$, we say that $\Pi_{\gp}$ is {\em $Q$-ordinary} (resp. {\em $B$-ordinary}) as  in \cite{Hida1995,Hida1998}.  
 
 Assume from now on that $\Pi_\gp$ is unramified  for all $\gp\mid p$. Since  $\Pi$ is cohomological
    there exists an
unramified {\em algebraic} character $\lambda_{\gp}:T_{2n}(F_\gp)\to\bar\Q^\times\subset \C^\times$ such that  (see 
  \eqref{eq:spherical}):  
 \begin{equation}
   \Pi_\gp=  \Ind_{B_{2n}(F_\gp)}^{\GL_{2n}(F_\gp)}(|\cdot|^{\frac{2n-1}{2}}\lambda_{\gp}).
  \end{equation}

Using $i_p: \C\xrightarrow{\sim} \bar\Q_p$ allows us to see the Hecke parameters  $\alpha_{\gp,i}=\lambda_{\gp,i}(\varpi_\gp), 1\leqslant i\leqslant 2n$ as elements of $\bar\Q_p^\times$. Then  $\Pi_\gp$ is  $B$-ordinary  relative to an ordering of its Hecke parameters $\alpha_{\gp,i}$  if and only if
  \begin{equation}
  \label{eqn:ordinary-individual-terms}
     \left|   \mu_{\gp,i}^\vee(\varpi_\gp) \cdot q_\gp^{1-i} \alpha_{\gp, 2n+1-i} \right|_p =1, \quad \mbox{for all $1\leqslant i \leqslant 2n$.}
  \end{equation}
 where  $\left|\cdot \right|_p$  denotes the  $p$-adic norm.  
  The $B$-dominance condition \eqref{eq:B-dominant} then implies that: 
  \begin{equation}  \label{eqn:ordering}
 \left| \alpha_{\gp, 1}  \right|_p< \left| \alpha_{\gp, 2}  \right|_p< \dots < \left| \alpha_{\gp, 2n}\right|_p, 
  \end{equation}
 hence there exists at most one ordering of the Hecke parameters for which  $\Pi_\gp$ is  $B$-ordinary. 
 Moreover this implies that a $B$-ordinary $\Pi_\gp$  is necessarily regular, {\it i.e.}, the $\alpha_{\gp,i}$ are pairwise distinct.

  Similarly, $\Pi_\gp$ is   $Q$-ordinary relative to  $\tau\in W_G/W_H$  if and only if
   \begin{equation}  \label{eqn:ordinary}
    \prod_{i\in \tau}\left|\alpha_{\gp,i}\right|_p =
    \left|q_\gp^{\frac{n(n-1)}{2}} \mu_\gp^\vee(\iota(\varpi_\gp^{-1}{\bf1}_{n},{\bf1}_{n}))\right|_p .
  \end{equation}
We will make a key observation that   $Q$-ordinarity implies  $Q$-regularity (see Definition \ref{Q-regular}).

\begin{lemma}\label{lem:ord-reg}
Assume  that the cuspidal automorphic representation $\Pi$ of $G(\A)$ is 
cohomological with respect to $\mu$ and admits an $(\eta,\psi)$-Shalika model. 
For $\gp$ dividing  $p$, if $\Pi_{\gp}$  is spherical and $Q$-ordinary, 
 then  
 \[  v_p( \alpha_{\gp,i}) < |\Sigma_\gp|\tfrac{\w+2n-1}{2} < v_p( \alpha_{\gp,i'}), \text { for all } i\in \tau,  i'\notin \tau.\] 
 In particular, $\Pi_{\gp}$ is  $Q$-ordinary only relative to $\tau$. Moreover,  
 $\widetilde\Pi_{\gp}=(\Pi_{\gp},\tau)$ is $Q$-regular, {\it i.e.},   $q_\gp^{\frac{n(1-n)}{2}}  \prod_{i\in \tau} \alpha_{\gp,i}$ is a simple eigenvalue of $U_\gp$ acting on  $ \Pi_\gp^{J_\gp}$ and  
 $\lambda_{\gp, i}\neq \lambda_{\gp, i'}$
for all $i\in \tau,  i'\notin \tau$.  
\end{lemma}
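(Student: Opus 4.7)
The plan is to combine the self-duality of $\Pi_\gp$ coming from the Shalika model, the numerical content of $Q$-ordinarity, the purity of $\mu$, and the integrality of $U_\gp^\circ$ on compactly supported cohomology. I normalize so that $v_p(\varpi_\gp)=1$ (hence $v_p(q_\gp)=|\Sigma_\gp|$) and set $M:=|\Sigma_\gp|\tfrac{\w+2n-1}{2}$. Proposition \ref{prop:selfdual} applied locally at $\gp$ gives $\Pi_\gp\cong\Pi_\gp^\vee\otimes\eta_\gp$, and comparing unramified Satake parameters (using $\eta_\gp=\eta_{0,\gp}|\cdot|^{-\w}$ with $\eta_{0,\gp}$ unramified, hence a $p$-adic unit) produces a permutation $\rho\in\mathfrak{S}_{2n}$ with $\alpha_{\gp,i}\alpha_{\gp,\rho(i)}=q_\gp^{2n-1}\eta_\gp(\varpi_\gp)$, so
$$v_p(\alpha_{\gp,i})+v_p(\alpha_{\gp,\rho(i)})=2M.$$
In particular, the multiset of valuations is symmetric about $M$.

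Next I compute $C:=\sum_{i\in\tau}v_p(\alpha_{\gp,i})$ from \eqref{eqn:ordinary} and compare it to $nM$. Explicitly $C=|\Sigma_\gp|\tfrac{n(n-1)}{2}+\sum_{\sigma\in\Sigma_\gp}\sum_{i>n}\mu_{\sigma,i}$, and a direct calculation using purity and $\mu_{\sigma,i}\geq\tfrac{\w}{2}$ for $i\leq n$ (which follows from $\mu_{\sigma,n}+\mu_{\sigma,n+1}=\w$ and $\mu_{\sigma,n}\geq\mu_{\sigma,n+1}$) yields
$$nM-C=|\Sigma_\gp|\tfrac{n^{2}}{2}+\sum_{\sigma\in\Sigma_\gp}\sum_{i\leq n}\bigl(\mu_{\sigma,i}-\tfrac{\w}{2}\bigr)\geq |\Sigma_\gp|\tfrac{n^{2}}{2}>0,$$
so $C<nM$ with an explicit positive gap. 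On the integrality side, the embedding \eqref{eqn:theta-epsilon-K} with $K_\gp=J_\gp$ realizes $\Pi_\gp^{J_\gp}$ as a Hecke stable subspace of the $E$-cohomology spanned by classes whose $U_\gp^\circ$-action is $\cO$-integral; by Lemma \ref{lemma-Jacquet} the eigenvalues of $U_\gp^\circ$ on $\Pi_\gp^{J_\gp}$ are $\mu^\vee(t_\gp)q_\gp^{n(1-n)/2}\alpha^{\tau'}$ as $\tau'$ ranges over $W_G/W_H$, so integrality forces $\sum_{\tau'}v_p(\alpha_{\gp,i})\geq C$ for all $\tau'$. Hence $\tau$ minimizes the sum and, by rearrangement, consists of the $n$ indices with smallest valuations among $\{v_p(\alpha_{\gp,i})\}$. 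Combined with the symmetry about $M$, this yields $v_p(\alpha_{\gp,i})\leq M\leq v_p(\alpha_{\gp,j})$ for $i\in\tau$, $j\notin\tau$.

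The main obstacle is upgrading these weak inequalities to strict ones. If $v_p(\alpha_{\gp,i_0})=M$ for some $i_0\in\tau$, then $v_p(\alpha_{\gp,\rho(i_0)})=M$ as well, producing a tie at the middle of the sorted sequence of valuations; the heart of the proof is to show that such a tie is incompatible with the integrality of the full family of Hecke operators at Iwahori level (equivalently, with the uniqueness of the $(\eta_\gp,\psi_\gp)$-Shalika functional on the $B$-Jacquet module of $\Pi_\gp$), given the strict gap $nM-C\geq \tfrac{n^{2}|\Sigma_\gp|}{2}$ established above. Once the strict separation $v_p(\alpha_{\gp,i})<M<v_p(\alpha_{\gp,j})$ for $i\in\tau$, $j\notin\tau$ is established, the remaining conclusions are purely formal: uniqueness of $\tau$ (any other minimizer must lie in $\{i:v_p(\alpha_{\gp,i})<M\}=\tau$); condition (ii) of Definition \ref{Q-regular} ($\rho$ sends $\tau$ to $\tau^c$); condition (i) (swapping any element of $\tau$ with one of $\tau^c$ strictly increases the valuation sum, so $\alpha^{\tau'}\neq\alpha^{\tau}$ for $\tau'\neq \tau$, making $q_\gp^{n(1-n)/2}\alpha^{\tau}$ a simple $U_\gp$-eigenvalue by Lemma \ref{lemma-Jacquet}); and $\lambda_{\gp,i}\neq \lambda_{\gp,j}$ for $i\in\tau$, $j\notin\tau$ (distinct $p$-adic valuations at $\varpi_\gp$).
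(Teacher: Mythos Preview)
Your setup is sound and your computation $nM-C\geq |\Sigma_\gp|\,n^2/2$ is correct, but there is a genuine gap precisely where you flag ``the main obstacle'': you never actually upgrade the weak inequalities $v_p(\alpha_{\gp,i})\leq M\leq v_p(\alpha_{\gp,j})$ to strict ones. You assert that a tie at $M$ is ``incompatible with the integrality of the full family of Hecke operators at Iwahori level (equivalently, with the uniqueness of the Shalika functional on the $B$-Jacquet module)'', but you give no argument for either claim. The second alternative does not lead anywhere I can see; the first is indeed the right idea, but it is exactly the missing step, and the global gap $nM-C\geq |\Sigma_\gp|\,n^2/2$ on the \emph{sum} of valuations does not by itself forbid an individual $v_p(\alpha_{\gp,i_0})$ from equalling $M$ (for $n\geq 2$ one can write down valuation multisets symmetric about $M$, with the $n$ smallest summing to $C$, yet with one of them equal to $M$). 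Note also that $M$ need not be a half-integer in any useful sense here, since the $\alpha_{\gp,i}$ lie in $\bar\Q_p$ and can have non-integral valuation.

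The paper closes this gap by using one specific Iwahori-level operator, namely $U_{\gp,n-1}=[I_\gp\,t_{\gp,n-1}\,I_\gp]$ with $t_{\gp,n-1}=\diag(\varpi_\gp\mathbf{1}_{n-1},\mathbf{1}_{n+1})$. Integrality of $\mu_\gp^\vee(t_{\gp,n-1})\,U_{\gp,n-1}$ bounds, for each $i\in\tau$, the product $\prod_{j\in\tau\setminus\{i\}}|\alpha_{\gp,j}|_p$; dividing into the $Q$-ordinarity identity \eqref{eqn:ordinary} yields the \emph{individual} bound
\[
v_p(\alpha_{\gp,i})\ \leq\ \sum_{\sigma\in\Sigma_\gp}(n-1+\mu_{\sigma,n+1}),
\]
and purity gives $\sum_\sigma(n-1+\mu_{\sigma,n+1})<M<\sum_\sigma(n+\mu_{\sigma,n})$, whence the strict inequality for $i\in\tau$ directly (and for $j=\rho(i)\notin\tau$ by the Shalika pairing). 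So the crucial difference is that the paper controls each $\alpha_{\gp,i}$ separately via a second Hecke operator, rather than only the product $\alpha_\gp^\tau$ via $U_\gp$. Once you insert this step, the rest of your argument (uniqueness of $\tau$, Definition~\ref{Q-regular}(i)--(ii), and $\lambda_{\gp,i}\neq\lambda_{\gp,j}$) goes through as you wrote.
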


\begin{proof}  
Consider  the Hecke operators  $U_{\gp,n-1}=[I_{\gp}t_{\gp,n-1} I_{\gp}]$ acting on $\Pi_{\gp,N}^{I_\gp}$, where $t_{\gp,n-1}=\diag(\varpi_\gp{\bf1}_{n-1},{\bf1}_{n+1})\in \GL_{2n}(F_\gp)$. Since  $\mu_\gp^\vee(t_{\gp,n-1})\cdot U_{\gp,n-1}$ preserves $p$-integrality its eigenvalues on  $\Pi_{\gp,N}^{I_\gp}$ are  $p$-integral, in particular for any $i\in \tau$ we have 
 \begin{equation}
   \prod_{i\neq i' \in \tau}  \left| \alpha_{\gp,i'} \right|_p \leqslant  \left|\mu_\gp^\vee(t_{\gp,n-1}^{-1})\cdot q_\gp^{\frac{(n-1)(n-2)}{2}} \right|_p.
  \end{equation}
Together with  \eqref{eqn:ordinary} this implies that   $\left|\alpha_{\gp,i} \right|_p \geqslant 
\left|\mu_{\gp,n}^\vee(\varpi_{\gp}^{-1}) 
q_\gp^{n-1}\right|_p=\left|\mu_{\gp,n+1}(\varpi_{\gp}) 
q_\gp^{n-1}\right|_p $, {\it i.e.}, 
  \begin{equation}\label{eq:Q-ordinary}
    v_p( \alpha_{\gp,i}) \leqslant \sum_{\sigma\in  \Sigma_\gp} \left(n-1+ \mu_{\sigma,n+1}\right).
  \end{equation}
 The existence of  $(\eta_\gp,\psi_\gp)$-Shalika model for   $\Pi_\gp$ gives by 
  \cite[Prop. 1.3]{ash-ginzburg}  an $i'=\rho(i)$  so that 
 \begin{equation}
v_p(\alpha_{\gp,i})+v_p(\alpha_{\gp,i'}) = |\Sigma_\gp|(2n-1+\w).
  \end{equation}
   The latter equality together  with  \eqref{eq:pure} and \eqref{eq:Q-ordinary}    yields for all $ i\in \tau$: 
    \[
       v_p( \alpha_{\gp,i}) \leqslant \sum_{\sigma\in  \Sigma_\gp} \left(n-1+ \mu_{\sigma,n+1}\right)< |\Sigma_\gp|\tfrac{\w+2n-1}{2} <
        \sum_{\sigma\in  \Sigma_\gp} \left(n+ \mu_{\sigma,n}\right)   \leqslant v_p( \alpha_{\gp,\rho(i)}).
  \]
All claims follow then easily  as clearly $\rho(\tau)\cap\tau=\varnothing$ as required by 
  Definition \ref{Q-regular}.
  \end{proof}

\subsection{$p$-adic interpolation of critical values}\label{sec:p-adic-interpol}

  We suppose in the sequel that  $\Pi$ is a  cuspidal automorphic representation of $G(\A)$ 
  which is cohomological with respect to a pure weight  $\mu
 \in X^*_0(T)$ and  that $\Pi$ admits an $(\eta,\psi)$-Shalika model. 
Assume further that for all $\gp\mid p$, $\Pi_\gp$ is spherical and  that  $\widetilde\Pi_\gp=(\Pi_\gp, \tau)$ is {\it $Q$-regular} for
$\tau=\{n+1,\dots, 2n\}$ in the sense of Definition \ref{Q-regular}.

\subsubsection{\bf Choice of local Shalika vectors}
\label{sec:localshalikachoice}
For $v\nmid p\infty$ we recall the  vector $W_{\Pi_v}\in \cS^{\eta_v}_{\psi_v}(\Pi_v)$  from  Proposition \ref{prop:FJL-fct}. 
  
  For  $\gp \mid p$, $\Pi_\gp$ is spherical and $\widetilde\Pi_\gp=(\Pi_\gp, \{n+1,\dots, 2n\})$ is {\it $Q$-regular}, 
  in particular,    $\alpha_\gp=\prod_{n+1\leqslant i \leqslant 2n}\alpha_{\gp,i}$
  is a simple eigenvalue for the Hecke operator $U_\gp$ acting on $\Pi_\gp^{J_\gp}$. By Lemma  \ref{lem:shalika-eigenvector} there exists an unique  $W_{\widetilde\Pi_\gp}$ on the line $ \cS^{\eta_\gp}_{\psi_\gp}(\Pi_\gp)^{J_\gp}\left[U_\gp-\alpha_\gp\right]$ normalized so that  $W_{\widetilde\Pi_\gp}(t_\gp^{-\delta_\gp})=1$.  Let 
\begin{equation}\label{eq:finite-representative}  
 W_{\widetilde\Pi_f} =   \otimes_{\gp\mid p}W_{\widetilde\Pi_\gp} \bigotimes \otimes_{v\nmid p\infty}'W_{\Pi_v}  
   \in\cS^{\eta_f}_{\psi_f}(\Pi_f).
   \end{equation}
   
In addition to the conditions  \ref{condition-K1} and \ref{condition-K2} on $K$ (see \S\ref{sec:distributions}) henceforth we assume that: 
\begin{enumerate}[label=(K3), ref=(K3)]
\item \label{condition-K3} $K$ fixes $W_{\widetilde\Pi_f}$ and $\eta$ is trivial on $I(\gm)$ hence can be seen as a character of $\Cl(\gm)$.
\end{enumerate}

For the local vectors at infinity, given any character $\epsilon$ of $K_{\infty}/K_\infty^\circ $
we recall the  basis    $\Xi_\infty^\epsilon$ of the line 
    $\rH^{t}(\lieg_\infty,K_{\infty}^\circ ;\cS^{\eta_\infty}_{\psi_\infty}(\Pi_\infty)\otimes V^\mu_{\C})[\epsilon]$ from \S\ref{sect:cuspidal-coh}. As in \cite[\S 4.1]{GR-ajm}, we make the following 

\begin{definition}\label{def:global-test-vector} Fix a basis $\{e_\alpha \}$ of $V^\mu_{\C}$ and a 
basis $\{\omega_i\}$ of $\left(\lieg_\infty/\liek_\infty\right)^\vee$. For  $\underline i= (i_1,...,i_{t}),$ with  $1 \leqslant i_1 < \cdots < i_{t} \leqslant \dim\left(\lieg_\infty/\liek_\infty\right)$, we
let $\omega_{\underline i}= \omega_{i_1}\wedge...\wedge \omega_{i_{t}}\in \bigwedge^{t} \left(\lieg_\infty/\liek_\infty\right)^\vee.$

      \smallskip
Using  \eqref{eq:gK-sigma},  $\Xi_\infty^\epsilon$ 
   can be written as a $K_\infty^\circ$-invariant element 
  \begin{equation}\label{eq:infinityrepresentative}
   \Xi_\infty^\epsilon \ = \ 
   \sum_{\underline i,\alpha}
   \omega_{\underline i}\otimes W_{\infty,\underline i, \alpha}^\epsilon \otimes e_\alpha 
   \ \in \ 
   \wedge^{t}(\lieg_\infty/\liek_\infty)^\vee \otimes \cS^{\eta_\infty}_{\psi_\infty}(\Pi_\infty)\otimes V^{\mu}_{\C}.
  \end{equation}
  for a unique choice of  $W_{\infty, \underline i, \alpha}^\epsilon \in \cS^{\eta_\infty}_{\psi_\infty}(\Pi_\infty)$
  called `cohomological vectors at infinity'.  
  
For $\underline i$ and $\alpha$ as above,  let $\varphi_{\underline i, \alpha}^\epsilon \in \Pi$ be the unique vector 
      whose image under  \eqref{eq:shalika-intertwining} equals:
   $$    
   W^{\eta}_{\varphi_{\underline i, \alpha}^\epsilon}=W_{\widetilde\Pi_f} \otimes W_{\infty,\underline i, \alpha}^\epsilon \in\cS^{\eta}_{\psi}(\Pi).   
   $$
    \end{definition}

    For each character $\epsilon$ of $K_{\infty}/K_{\infty}^\circ$ the embedding $\Theta^\epsilon_K$ defined in \eqref{eqn:theta-epsilon-K} yields 
 \[\cS^{\eta_f}_{\psi_f}(\Pi_f) \xrightarrow{\Theta^\epsilon_K}       \rH^{t}_c(\SGK, \cV_{\C}^\mu)[\epsilon] 
 \xrightarrow{i_p^*}     \rH^{t}_c(\SGK, \cV_{\bar\Q_p}^\mu)[\epsilon]
 \xrightarrow{(g,v)\mapsto (g, g^{-1}\cdot v)}  \rH^{t}_c(\SGK, \cV_{\bar\Q_p}^\mu)[\epsilon].        \]
 The image $\phi_{\widetilde\Pi}^\epsilon$ of $W_{\widetilde\Pi_f}$ under the composition of these three maps belongs to $\rH^{t}_c(\SGK, \cV_{E}^\mu)[\epsilon]$
 for some finite extension $E$ of $\Q_p$, and after possibly rescaling the maps $\Theta^\epsilon$, {\it i.e.,} rescaling the basis elements $\Xi_\infty^\epsilon$, one can  render the
 cohomology class $\cO$-integral:
   \begin{equation}\label{eq:coh-class}
    \phi_{\widetilde\Pi}^\epsilon\in \rH^{t}_c(\SGK, \cV_{\cO}^\mu)[\epsilon].  
 \end{equation}
 Recall that the Hecke operator $U_{p^\beta}^\circ$  defines an endomorphism of $\rH^{t}_c(\SGK, \cV_{\cO}^\mu)[\epsilon]$.
 Since $W_{\widetilde\Pi_f}$ is an $U_{p^\beta}$-eigenvector with eigenvalue $\alpha_{p^\beta}=\prod_{\gp\mid p}\alpha_\gp^{\beta_\gp}$, it follows (after possibly additionally rescaling by a power of  $p$ killing the torsion in $\rH^{t}_c(\SGK, \cV_{\cO}^\mu)$) that one can assume $\phi_{\widetilde\Pi}^\epsilon$ is an $U_{p^\beta}^\circ$-eigenvector  with eigenvalue
 $\alpha_{p^\beta}^\circ=\mu^\vee(t_p^\beta) \alpha_{p^\beta}$.

\subsubsection{\bf  Interpolation formula at critical points.}
In this section we will relate the image of $\phi_{\widetilde\Pi}^\epsilon$ defined in \eqref{eq:coh-class} by the evaluation map 
$\cE^{j,\eta}_{\beta,[\delta]}$ from  \eqref{eq:eval}, to the  Friedberg-Jacquet integral from Proposition \ref{prop:FJ}.

\begin{prop} \label{prop:integration-one-component}
For any character $\epsilon$ of $K_{\infty}/K_\infty^\circ $ and any  $[\delta]\in\Cl(p^\beta\gm)\times \Cl(\gm)$ we have: 
 \[
 i_p^{-1}\left( \mu^\vee(t_p^{-\beta})\cdot  \cE^{j,\w}_{\beta,[\delta]}(\phi_{\widetilde\Pi}^\epsilon)\right)= \int_{\SHbeta[\delta]} 
\varphi_{\widetilde\Pi,j}^\epsilon(h  \xi t_p^\beta) |\det(h_1^j h_2^{\w-j}) |_F dh, 
\]
where $\varphi_{\widetilde\Pi,j}^\epsilon = \sum_{\underline i} \sum_\alpha a_{\underline i, \alpha, j}^\epsilon \cdot  \varphi_{\underline i, \alpha}^\epsilon$ 
for suitable  $a_{\underline i, \alpha, j}^\epsilon \in \C$. 
\end{prop}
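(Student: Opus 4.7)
\medskip
\noindent\textbf{Proof proposal.}
The plan is to unravel the definition of $\cE^{j,\w}_{\beta,[\delta]}$ stage by stage and match each stage with the corresponding piece of the period integral, keeping careful track of the conversion between the $\bullet$-action on $V^\mu_\cO$ and the natural $G(E)$-action on $V^\mu_E$. The factor $\mu^\vee(t_p^{-\beta})$ is exactly what is needed to pass from $\tau_\beta^\circ$ to $\tau_\beta$ via \eqref{eqn:two-actions} and the identity $\tau_\beta=\mu^\vee(t_p^{-\beta})\tau_\beta^\circ$ recorded after \eqref{eqn:tau-beta-on-cohomology}. After this rescaling the computation may be carried out entirely with the natural action of $G(E)\cong G(\C)$ on $V^\mu_\C$, where the class $\phi_{\widetilde\Pi}^\epsilon$ admits an explicit de Rham-type representative.

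First I would describe this representative. By construction $\phi_{\widetilde\Pi}^\epsilon=\Theta^\epsilon_K(W_{\widetilde\Pi_f})$ transported to $\bar\Q_p$-coefficients and then re-written via $(g,v)\mapsto(g,g^{-1}\cdot v)$ so that it becomes the local system built from the constant $G(\Q)$-module $V^\mu$. Unwinding \eqref{eqn:theta-epsilon} and the decomposition \eqref{eq:infinityrepresentative} of $\Xi_\infty^\epsilon$, the class $i_p^{-1}\phi_{\widetilde\Pi}^\epsilon$ is represented on $G(\A)/KK_\infty^\circ$ by the closed compactly supported differential form
\[
\sum_{\underline i,\alpha}\omega_{\underline i}\otimes \varphi_{\underline i,\alpha}^\epsilon(g)\cdot e_\alpha,
\]
where $\varphi_{\underline i,\alpha}^\epsilon$ is the cusp form of Definition \ref{def:global-test-vector}. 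This is the standard translation between $(\lieg_\infty,K_\infty^\circ)$-cohomology and compactly supported sheaf cohomology.

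Next I would apply the four operations defining $\cE^{j,\w}_{\beta,\delta}$ in turn. Pullback by $\iota_\beta$ replaces $g$ by $\iota(h)\xi t_p^\beta$, so the form becomes $\sum\omega_{\underline i}\otimes\varphi_{\underline i,\alpha}^\epsilon(\iota(h)\xi t_p^\beta)\cdot e_\alpha$ on $\SHbeta$. The twist $\tau_\beta$ (now in its un-dotted form) acts on the $V^\mu$-coefficients by $\xi t_p^\beta$, and then $\kappa_j$ projects $V^\mu$ onto the one-dimensional $H$-representation $V^{(j,\w-j)}$; since $\kappa_j$ is $H$-equivariant and rational, this contracts the $e_\alpha$-sum into complex scalars $a_{\underline i,\alpha,j}^\epsilon$ depending only on the archimedean data, producing the cusp form $\varphi_{\widetilde\Pi,j}^\epsilon=\sum a_{\underline i,\alpha,j}^\epsilon\varphi_{\underline i,\alpha}^\epsilon$ that appears in the statement. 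The trivialization $\triv_\delta^*$ is, by construction, the identification of the $H$-character $V^{(j,\w-j)}$ trivialized along the component $\SHbeta[\delta]$, and its effect on the sections here is multiplication by $\N_{F/\Q}(\det(h_1)^j\det(h_2)^{\w-j})$ up to the $p$-adic contribution encoded in $\delta$; combined with the passage from $[\delta]$ to $\delta$ via \eqref{eq:triv-delta} and the decomposition $\varepsilon(y)=\N_{F_p/\Q_p}(y_p)|y_f|_F$ of the cyclotomic character, this yields precisely the archimedean factor $|\det(h_1^j h_2^{\w-j})|_F$ appearing in the integrand. Finally, capping with $\theta_{[\delta]}$ evaluates the top-degree form against the chosen fundamental class on $\SHbeta[\delta]$, which is integration with respect to the invariant measure on $H_\infty^\circ/L_\infty^\circ$; summed over the indices $\underline i$ this produces the claimed integral.

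The main obstacle is the bookkeeping of the three independent normalizations: the explicit factor $\mu^\vee(t_p^{-\beta})$ relating the $\bullet$-action to the natural action; the transfer via $i_p$ between the $\C$-valued cohomological realization that contains the cusp forms $\varphi_{\underline i,\alpha}^\epsilon$ and the $\bar\Q_p$-valued realization in which $\phi_{\widetilde\Pi}^\epsilon$ was declared integral; and the splitting of the character $\varepsilon$ into $p$-adic and archimedean parts, which is what converts the abstract trivialization $\triv_\delta^*$ (a priori defined by $\N_{F_p/\Q_p}^{-1}$ acting on the $p$-component) into the honest archimedean absolute value $|\det(h_1^j h_2^{\w-j})|_F$ inside the integral. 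Once these three normalizations are aligned, the equality follows from standard unfolding of $(\lieg_\infty,K_\infty^\circ)$-cohomology classes as differential forms on locally symmetric spaces.
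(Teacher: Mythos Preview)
Your proposal is correct and follows essentially the same approach as the paper's own proof. The paper organizes the bookkeeping you describe via an explicit commutative diagram comparing the $K$-module and $G(\Q)$-module realizations of the local systems (introducing auxiliary maps $\mathcal{T}_\beta$ and $\triv'_\delta$ on the $G(\Q)$-side), but the substance---writing $\phi_{\widetilde\Pi}^\epsilon$ as a differential form via \eqref{eq:infinityrepresentative}, applying $\iota_\beta^*$, $\tau_\beta$, $\kappa_j$ in turn, and then using the splitting $\varepsilon(y)=\N_{F_p/\Q_p}(y_p)|y_f|_F$ together with the decomposition $h=\gamma\delta l h'_\infty$ to convert the trivialization into the factor $|\det(h_1^j h_2^{\w-j})|_F$---is exactly what you outline.
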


In the above proposition, and henceforth,  
$\sum_{\underline i}$ will denote summing over all  ${\underline i} = (i_1,\dots,i_{t})$ and $\sum_\alpha$ will denote summing over all 
$1 \leqslant \alpha \leqslant \dim(V^\mu),$ as in Definition \ref{def:global-test-vector}. A more careful choice of the bases $\{e_\alpha\}$ and $\{\omega_i\}$ as in \cite[\S7]{J-preprint} yields algebraic coefficients $a_{\underline i,\alpha,j}^\epsilon\in\overline{\Q}$.

\begin{proof} We follow closely the proof of \cite[Prop. 4.1]{BDJ}. Consider the    commutative diagram:
	$$	\xymatrix{
		\rH^{t}_c(\SGK, \cV_{E}^\mu) \ar[d]^{\kappa_j \circ \mathcal{T}_{\beta}}\ar[rrrrr]_{\sim}^{(g,v)\mapsto (g, g^{-1}\cdot v)} &&&&& 
		\rH^{t}_c(\SGK, \cV_{E}^\mu)  \ar[d]^{\kappa_j \circ \tau_\beta\circ \iota_\beta^*}\\
		\rH^{t}_c(\SHbeta, \cV^{(j,\w-j)}_{E}) \ar[rrrrr]_{\sim}^{(h,v)\mapsto (h, h^{-1}\cdot v)} 
		\ar[d]^{(\relbar\cap \theta_{[\delta]})\circ\triv'_\delta{}^*}&&&&& 
		\rH^{t}_c(\SHbeta, \cV^{(j,\w-j)}_{E}) \ar[d]^{(\relbar\cap \theta_{[\delta]})\circ\triv_\delta^*}\\
		E \ar[rrrrr]_{\sim}^{ \N_{F_p/\Q_p}\left(\det(\delta_{1,p}^j\delta_{2,p}^{\w-j})\right) }&&&&&E}
		$$	
		where $ \tau_\beta=\mu^\vee(t_p^{-\beta})\tau_\beta^\circ$ is defined in \eqref{eqn:tau-beta-on-cohomology}, 
		the horizontal maps are induced from the morphisms of local systems written above them, 
	the map $\mathcal{T}_{\beta}$ is induced from the morphisms of local systems $(h,v)\mapsto (h\xi t_p^\beta, v)$, and  $\triv'_\delta$ 
	is induced from the morphisms of local systems:
	$$H(\Q)\delta L_\beta H_\infty^\circ \times V^{(j,\w-j)}_{\Q(\widetilde\Pi)} \to \cV^{(j,\w-j)}_{\Q(\widetilde\Pi)| \SHbeta[\delta]} \text{ , } 
	(\gamma\delta\ell h_\infty,v)\mapsto(\gamma\delta\ell h_\infty, \gamma^{-1}\cdot v).$$

Since $\cE^{j,\w}_{\beta, [\delta]}= \varepsilon \!\left(\det(\delta_{1}^j\delta_{2}^{\w-j})\right) \cE^{j,\w}_{\beta,\delta}$, with  
$\varepsilon_f = |\cdot |_{F,f}\N_{F_p/\Q_p} $, the above diagram shows that the proposition  is equivalent to: 
 \begin{equation}\label{eq:archimedean-int} 
 |\det(\delta_{1,f}^j\delta_{2,f}^{\w-j})|_F 
\left(\relbar\cap \theta_{[\delta]})\circ\triv_\delta^*\circ\kappa_j\circ \mathcal{T}_{\beta}\right)(\phi_{\widetilde\Pi}^\epsilon)
=  \int_{\SHbeta[\delta]} 
\varphi_{\widetilde\Pi,j}^\epsilon(h\xi t_p^\beta) |\det(h_1^j h_2^{\w-j}) |_F dh, 
\end{equation}
the left hand side being  considered over $\C$ via $i_p^{-1}:\bar\Q_p \xrightarrow{\sim} \C$. By Definition  \ref{def:global-test-vector} 
$$ 
 \phi_{\widetilde\Pi}^\epsilon=   (\cS^\eta_\psi)^{-1}( W_{\widetilde\Pi_f} \otimes \Xi_\infty^\epsilon) \ = \ 
\sum_{\underline i} \sum_{\alpha} 
   \omega_{\underline i}\otimes \varphi_{\underline i, \alpha}^\epsilon \otimes e_\alpha 
\ \in \ 
 \left(\wedge^{t}(\lieg_\infty/\liek_\infty)^\vee\otimes 
\Pi \otimes V^{\mu}_{\C}\right)^{K_\infty^\circ},
$$
yielding   a  $V^{\mu}_{\C}$-valued differential $t$-form on $G_\infty^\circ/K_\infty^\circ$. Now, recall the basis  $\kappa_j$ of the line  $\Hom_H(V^{\mu}, V^{(j,\w-j)})$ from \eqref{eqn:hom-H-one-dim} 
and  consider the  map
$$
   \kappa_j\circ \iota^*: \rH^{t}(\lieg_\infty,K_\infty^\circ ;\Pi_\infty \otimes V^{\mu}_{\C})\to \rH^{t}(\lieh_\infty,L^\circ_\infty;\Pi_\infty \otimes V^{(j,\w-j)}_{\C}). 
 $$
We get 
$$
(\kappa_j\circ \mathcal{T}_{\beta})(\phi_{\widetilde\Pi}^\epsilon) \ = \  
 \sum_{\underline i} \sum_\alpha
 \iota^* \omega_{\underline i} \otimes \varphi_{\underline i, \alpha}^\epsilon(-\cdot \xi t_p^\beta) \otimes \kappa_j(e_\alpha) \ \in \ 
 \left(\wedge^{t}(\lieh_\infty/\liel_\infty)^\vee\otimes 
\Pi \otimes V^{(j,\w-j)}_{\C}\right)^{L_\infty^\circ}.
$$ 

 First, let   $\kappa_j^\circ: V^{(j,\w-j)}_{\C}\xrightarrow{\sim} \C$ be the scalar extension of \eqref{eq:basis-j-line}, and so $\kappa_j(e_\alpha)$ corresponds to a complex number. Next, 
 after the discussion in the paragraph following \eqref{eqn:q0}, we can fix a  basis  for the top-exterior 
 $\wedge^{t}(\lieh_\infty/\liel_\infty)^\vee$ corresponding to the Haar measure $dh_\infty$; hence $ \iota^* \omega_{\underline i}$ is a scalar multiple of $dh_\infty$. 
 Putting both together, the restriction to $\SHbeta[\delta]$  of $\kappa_j( \mathcal{T}_{\beta}( \phi_{\widetilde\Pi}^\epsilon))$
 can be seen as $V^{(j,\w-j)}_{\C}$-valued top-degree differential form on $H_\infty^\circ/L_\infty^\circ$ given by 
$$ 
\sum_{\underline i} \sum_\alpha  
a_{\underline i, \alpha, j}^\epsilon \cdot  
\varphi_{\underline i, \alpha}^\epsilon(\delta h_\infty \xi t_p^\beta) \det(h_{1,\infty}^{ j} h_{2,\infty}^{ \w-j})
dh_\infty
$$
for suitable  $a_{\underline i, \alpha, j}^\epsilon\in \C$. Writing $h=\gamma \delta l h'_\infty\in H(\Q)\delta L_\beta H_\infty^\circ\subset H(\A)$, 
and using that 	$\triv'_\delta(\gamma\delta\ell h'_\infty,v)=(\gamma\delta\ell h'_\infty, \det(\gamma_{1,\infty}^{-j}\gamma_{2,\infty}^{j-\w}) v)$ 
one obtains \eqref{eq:archimedean-int} and the Proposition from 
\begin{multline*}
|\det(\delta_{1,f}^j\delta_{2,f}^{\w-j})|_F \det(\gamma_{1,\infty}^{-j}\gamma_{2,\infty}^{j-\w})\det(h_{1,\infty}^{ j} h_{2,\infty}^{ \w-j}) \ = \\
|\det(\delta_{1,f}^j\delta_{2,f}^{\w-j})|_F\det(h_{1,\infty}^{\prime j} h_{2,\infty}^{\prime \w-j}) \ = \ |\det(h_1^j h_2^{\w-j})|_F.\qedhere
\end{multline*} 
 \end{proof}

\subsubsection{\bf $p$-adic distributions attached to $\widetilde\Pi$}
Recall that  $\Pi$ is a cuspidal automorphic representation of $G(\A)$ admitting a global $(\psi,\eta)$-Shalika model, 
which is cohomological with respect to a pure dominant integral weight $\mu$. Recall also that   $\Pi_\gp$ is spherical for all  $\gp\mid p$
and  that  $\widetilde\Pi_\gp=(\Pi_\gp, \{n+1,\dots, 2n\})$ is {\it $Q$-regular} (see Definition \ref{Q-regular}), which by Lemma \ref{lem:ord-reg} is automatically 
fulfilled if  $\Pi_\gp$ is   $Q$-ordinary. In all cases 
$\Pi_\gp^{J_\gp}$ contains a unique line on which $U_\gp$ acts by   $\alpha_{\gp}$. 
Finally recall the $U_\gp^\circ$-eigenvectors  $\phi_{\widetilde\Pi}^\epsilon$ constructed in \eqref{eq:coh-class}.  Then 
\begin{equation}\label{eq:def-total-phi}
\phi_{\widetilde\Pi}=\sum_{\epsilon: F_{\infty}^\times/F_{\infty}^{\times\circ}  \to \{\pm 1\}}\phi_{\widetilde\Pi}^\epsilon
  \end{equation}
is  an $U_\gp^\circ$-eigenvector with same eigenvalue. 
When  $\widetilde\Pi_p$  is $Q$-ordinary, consider the element 
\begin{equation}
    \mmu_{\widetilde\Pi}^\eta=\mmu_{\phi_{\widetilde\Pi}}^\eta=\varepsilon_{\cyc}^{-j}(\mmu^{j,\eta}_{\phi_{\widetilde\Pi}})\in \cO[[\Cl(p^\infty)]],
  \end{equation}      
     constructed in \eqref{eq:definitionofmu-fs} and \eqref{eq:def-mu-pi} which defines a 
      measure $d\mmu_{\widetilde\Pi}^\eta$ on $\Cl(p^\infty)$.      
 
\subsubsection{\bf Main theorem on $p$-adic interpolation} Fix any character $\epsilon$ of $K_{\infty}/K_\infty^\circ $ and any $j\in \Crit(\mu)$. Consider the following cohomological test vector: 
\begin{equation}
\label{eqn:coh-test-vector-infty}
W_{\Pi_\infty,j}^\epsilon \ =  \ \sum_{\underline i,\alpha} a_{\underline i, \alpha, j}^\epsilon W_{\infty, \underline i, \alpha}^\epsilon.
\end{equation}
A crucial result of Sun \cite[Thm. 5.5]{sun} asserts the following non-vanishing:
\begin{equation}
\label{eqn:sun}
\zeta_\infty(j+\tfrac{1}{2};W_{\Pi_\infty,j}^\epsilon) \in \C^\times.
\end{equation}
Note that since $\Pi_\infty \otimes {\rm sgn} = \Pi_\infty$ we have suppressed  $\chi_\infty$ from the notation.

In fact, using K\"unneth's theorem its easy to see that $\zeta_\infty(j+\tfrac{1}{2};W_{\Pi_\infty,j}^\epsilon)$ is a product of similar quantities parsed over the archimedean places.
Since we have multiplicity one for cuspidal cohomology in top-degree (see \eqref{eq:gK-sigma}) we can only change the class $\Xi_\infty^\epsilon$ by a non-zero scalar, which will correspondingly 
scale the cohomological test-vector and so also the zeta integral. 
The variation of the complex period $\zeta_\infty(j+\tfrac{1}{2};W_{\Pi_\infty,j}^\epsilon)$ in $j$ is studied in \cite{J-preprint}. 
The reader is also referred to the discussion around \cite[Thm. 6.6.2]{GR-ajm}.

Recall the auxiliary ideal $\gm$ from \ref{condition-L1} in \S\ref{sec:shl} and, for brevity, let's define: 
\begin{equation}\label{eq:gamma}
\gamma=  \#\Cl(\gm)\cdot \# \GL_n(\cO_F/\gm)\cdot \# \PGL_n(\cO_F/\gm)\cdot \prod_{\gp\mid p}
\left(q_\gp^{-n^2} \cdot  \#  \GL_n(\cO_F/\gp)\right)  \in\Q^\times.
\end{equation}

\begin{theorem}
\label{thm:p-adic-interpolation} 
Let $\Pi$ be a cuspidal automorphic representation of $\GL_{2n}/F$ admitting a $(\psi,\eta)$-Shalika model and such that 
$\Pi_\infty$ is cohomological of weight $\mu$. Assume that for all  $\gp\mid p$, $\Pi_\gp$ is spherical and  admits a $Q$-regular refinement $\widetilde \Pi_{\gp}$. Then for any $j\in \Crit(\mu)$ and for any finite order  character $\chi$ of $\Cl(p^\infty)$ of  conductor $\beta_\gp \geqslant 1$ at  $\gp\mid p$:
\begin{multline*}
  i_p^{-1}\left(  \int_{\Cl(p^\infty)} \chi(x)  d\mmu_{\phi_{\widetilde\Pi}}^{\eta,j}(x)  \right)
    \ = \\ 
 =\gamma \cdot \N^{jn}_{F/\Q}(\mathfrak{d})  \cdot 
        \prod_{\gp\mid p}\left(\alpha_{\gp}^{-1} 
    q_\gp^{n(j+1)}\right)^{\beta_\gp} \cdot     
    {\cG(\chi_f)^{n} \cdot  L(j+\tfrac{1}{2}, \Pi_f \otimes \chi_f) } 
     \zeta_\infty(j+\tfrac{1}{2};W_{\Pi_\infty,j}^{(\varepsilon^j\chi\eta)_\infty}). 
\end{multline*}
\end{theorem}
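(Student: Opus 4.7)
The plan is to unwind the distribution back to the Friedberg--Jacquet period integral, and then factor the resulting global zeta integral using the local computations of \S\ref{sec:local}.

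\textbf{Step 1: Unwinding the distribution.} By \eqref{eq:definitionofmu-fs}, for $\chi$ of conductor $p^\beta$ with all $\beta_\gp\geqslant 1$, I would rewrite
\[
\int_{\Cl(p^\infty)}\chi(x)\,d\mmu_{\phi_{\widetilde\Pi}}^{\eta,j}(x) \;=\; (\alpha_{p^\beta}^\circ)^{-1}\sum_{[\bar x]\in \Cl(p^\beta)}\chi([\bar x])\,\cE^{j,\eta}_{\beta,[\bar x]}(\phi_{\widetilde\Pi}),
\]
and then expand $\cE^{j,\eta}_{\beta,[\bar x]}$ via \eqref{eqn:summing-2nd-var} and \eqref{eq:triv-delta} into a sum over components $[\delta(x,y)]\in\pi_0(\SHbeta)$, where on each component one accumulates the character weight $\chi(\bar x)\eta_0(y)\varepsilon(x^j y^\w)$.

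\textbf{Step 2: Rewriting as an adelic integral.} Using Proposition \ref{prop:integration-one-component}, I would replace each $\cE^{j,\w}_{\beta,[\delta(x,y)]}(\phi_{\widetilde\Pi}^\epsilon)$ by $\mu^\vee(t_p^\beta)\int_{\SHbeta[\delta(x,y)]}\varphi_{\widetilde\Pi,j}^\epsilon(h\xi t_p^\beta)|\det(h_1^j h_2^{\w-j})|_F\,dh$, and sum over the characters $\epsilon$ of $K_\infty/K_\infty^\circ$ via \eqref{eq:def-total-phi}. Writing $h = \delta(x,y)h'$ and using $\varepsilon = |\cdot|_{F,f}\N_{F_p/\Q_p}$ trivialized on $F^\times F_\infty^{\times\circ}$, the accumulated characters reassemble into precisely the twist $(\chi|\cdot|^j)(\det h_1/\det h_2)\cdot \eta^{-1}(\det h_2)$ of the Friedberg--Jacquet integral. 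Proposition \ref{prop:FJ} then identifies the resulting expression with $\mu^\vee(t_p^\beta)\cdot \zeta(j+\tfrac12;W^\eta_{\varphi_{\widetilde\Pi,j}}(-\cdot \xi t_p^\beta),\chi)$, and by the construction of the test vectors (Definition \ref{def:global-test-vector} together with \eqref{eq:finite-representative}), this zeta integral factors as an Eulerian product via \eqref{eq:product-zeta}.

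\textbf{Step 3: Local evaluation and collection of constants.} I now evaluate each local factor of the Euler product: at places $v\nmid p\infty$ Proposition \ref{prop:FJL-fct} yields the unramified $L$-factor $L_v(j+\tfrac12,\Pi_v\otimes\chi_v)$ together with the contribution $q_v^{\delta_v nj}$ assembling into $\N_{F/\Q}^{jn}(\mathfrak{d})$; at primes $\gp\mid p$ the normalization from Lemma \ref{lem:shalika-eigenvector} ($W_{\widetilde\Pi_\gp}(t_\gp^{-\delta_\gp})=1$) together with Proposition \ref{prop:localbirch} produces $\cG(\chi_\gp)^n\cdot q_\gp^{\beta_\gp n(1-n)+(\beta_\gp+\delta_\gp)nj}$; and at infinity the local integral is by definition $\zeta_\infty(j+\tfrac12;W_{\Pi_\infty,j}^{(\varepsilon^j\chi\eta)_\infty})$, which is non-zero by Sun's theorem \eqref{eqn:sun}. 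The prefactor $(\alpha_{p^\beta}^\circ)^{-1}\cdot\mu^\vee(t_p^\beta) = \alpha_{p^\beta}^{-1}= \prod_{\gp\mid p}\alpha_\gp^{-\beta_\gp}$ cancels correctly; combining with the $q_\gp$-powers from the local integrals at $\gp\mid p$ gives $\prod_{\gp\mid p}(\alpha_\gp^{-1}q_\gp^{n(j+1)})^{\beta_\gp}$, while the remaining $q_\gp^{\delta_\gp nj}$ merges with the $v\nmid p\infty$ contribution into $\N_{F/\Q}^{jn}(\mathfrak{d})$.

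\textbf{Expected main obstacle.} The bulk of the difficulty lies in bookkeeping: the rational constant $\gamma$ of \eqref{eq:gamma} collects all the volume comparisons, namely the indices $[L^{(p)}:L_\beta\cap H(\A^{(p)})]$, the volumes of $\GL_n(\cO_F/\gm)$ and $\PGL_n(\cO_F/\gm)$ that appear when passing from the evaluation on connected components of $\SHbeta$ to the adelic integral on $Z(\A)H(\Q)\backslash H(\A)$, and the Haar measure normalizations used in Proposition \ref{prop:FJ}. A secondary delicate point is matching the archimedean sign character $(\varepsilon^j\chi\eta)_\infty$ with the correct $\epsilon\in\widehat{K_\infty/K_\infty^\circ}$ when summing over $\epsilon$ via \eqref{eq:def-total-phi}: only the component indexed by the sign of $\varepsilon^j\chi\eta$ at infinity survives, by orthogonality of characters of $\pi_0(H_\infty)$. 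Once these combinatorial and measure-theoretic checks are performed, the remaining computation amounts to reorganizing constants into the form claimed in the theorem.
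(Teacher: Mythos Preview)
Your proposal is correct and follows essentially the same route as the paper: unwind the distribution via \eqref{eq:definitionofmu-fs} and \eqref{eqn:summing-2nd-var}, apply Proposition~\ref{prop:integration-one-component} and the sum over $\epsilon$ in \eqref{eq:def-total-phi} to reach the Friedberg--Jacquet period, then factor via Proposition~\ref{prop:FJ} and evaluate the local pieces using Propositions~\ref{prop:FJL-fct}, \ref{prop:localbirch} and Lemma~\ref{lem:shalika-eigenvector}. The two points you flag as obstacles---the volume constant $\gamma$ and the orthogonality argument isolating $\epsilon=(\varepsilon^j\chi\eta)_\infty$---are exactly the places where the paper pauses as well; one small addition is that the $\chi(\varpi_v)^{\delta_v n}$ factors from Proposition~\ref{prop:FJL-fct} at $v\nmid p\infty$ combine with the local Gauss sums at $\gp\mid p$ via $\cG(\chi_f)=\chi(\mathfrak{d}^{-1})\prod_{\gp\mid p}\cG(\chi_\gp)$ to give the global $\cG(\chi_f)^n$.
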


\begin{proof}
Using  \eqref{eq:definitionofmu-fs} and  \eqref{eqn:summing-2nd-var} we find that $ \int_{\Cl(p^\infty)} \chi(x)  d\mmu_{\phi_{\widetilde\Pi}}^{\eta,j}(x)$ equals
  \[
   (\alpha_{p^{\beta}}^\circ)^{-1}
    \sum_{[x]\in \Cl(p^\beta\gm)}\chi([x])\cE^{j,\eta}_{\beta,[x]}(\phi_{\widetilde\Pi}) \nonumber
     =  \alpha_{p^{\beta}}^{-1} \cdot \mu^\vee(t_p^{-\beta}) \sum_{\substack{[x]\in \Cl(p^\beta\gm) \\ [y]\in \Cl(\gm)}}
\chi([x]) \eta_0([y]) \cE^{j,\w}_{\beta,[\delta(x,y)]}(\phi_{\widetilde\Pi}). 
  \]
Since $\pi_0(\SHbeta)\simeq \Cl(p^\beta \gm)\times  \Cl(\gm)$ by
 \eqref{eq:def-total-phi} and Proposition \ref{prop:integration-one-component}  the integral equals: 
 \begin{align*}
 \alpha_{p^{\beta}}^{-1} \cdot \sum_{\epsilon: \{\pm 1\}^{\Sigma_\infty}\to \{\pm 1\}} 
\int_{\SHbeta} \varphi_{\widetilde\Pi,j}^\epsilon(h\xi t_p^\beta) 
\chi\left(\frac{\det(h_1)}{\det(h_2)}\right)
\left| \frac{\det(h_1)}{\det(h_2)}\right|_F^j \eta^{-1}( \det(h_2)) dh.
  \end{align*}
  Note that the integrand is $L_\infty^\circ Z(\A_f)$-invariant and $L_\infty/L_\infty^\circ Z_\infty$ acts on it by 
  $\epsilon \varepsilon_\infty^j\chi_\infty\eta_\infty$, hence the integral vanishes unless $\epsilon=(\varepsilon^j\chi\eta)_\infty$.
  Since  $Z(\A_f)\cap L_\beta$ is independent of $\beta$, after some volume computation, one further finds:
 \begin{align*}
 i_p^{-1}\left(  \int_{\Cl(p^\infty)}\varepsilon^j(x) \chi(x)  d\mmu_{\widetilde\Pi}^\eta(x)\right) =\gamma \cdot  \Psi\left(j+\tfrac{1}{2},\varphi_{\widetilde\Pi,j}^{(\varepsilon^j\chi\eta)_\infty}(-\cdot \xi t_p^\beta),\chi,\eta\right)
   \prod_{\gp\mid p} (\alpha_{\gp}^{-1}  q_\gp^{n^2})^{\beta_\gp}. 
  \end{align*}

  By Proposition \ref{prop:FJ} the   Friedberg-Jacquet integral  has an Euler product for $\Re(s)\gg 0$:
  $$
{\Psi(s,\varphi_{\widetilde\Pi,j}^\epsilon(-\cdot \xi t_p^\beta),\chi,\eta)} =
  \prod_{v\nmid p\infty}\zeta_v(s;W_{\Pi_v},\chi_v)\cdot
  \prod_{\gp\mid p}\zeta_\gp(s;W_{\widetilde\Pi_\gp}(-\cdot\xi t_\gp^{\beta_\gp}),\chi_\gp)\cdot
  \zeta_\infty(s;W_{\Pi_\infty,j}^\epsilon, \chi_\infty). $$

   Since  $L(s, \Pi \otimes \chi)$ has trivial Euler factors at all places $\gp\mid p$ (as $\Pi_\gp$ is spherical while $\chi_\gp$ is ramified), Proposition \ref{prop:FJL-fct}  implies that: 
    $$\prod_{v\nmid p\infty}\zeta_v(j+\tfrac{1}{2};W_{\Pi_v},\chi_v)= \N_{F/\Q}^{jn}(\mathfrak{d}^{(p)}) \chi(\mathfrak{d}^{-1})^n L(j+\tfrac{1}{2}, \Pi_f \otimes \chi_f).$$
The  factor at $\gp\mid p$ is computed in Proposition \ref{prop:localbirch}, which together with   Lemma \ref{lem:shalika-eigenvector}  gives:
  \[
q_\gp^{\beta_\gp n^2}
\zeta_\gp(j+\tfrac{1}{2};W_{\widetilde\Pi_\gp},\chi_\gp)   =  
\cG(\chi_\gp)^n \cdot q_\gp^{\beta_\gp n+ (\beta_\gp+\delta_\gp)jn}W_{\widetilde\Pi_\gp}({\bf1}_{2n})=
\N_{F/\Q}^{jn}(\mathfrak{d}_p) \cG(\chi_\gp)^n \cdot q_\gp^{\beta_\gp n(j+1)}.
  \]
    Since $\cG(\chi_f)=\chi(\mathfrak{d}^{-1}) \prod_{\gp\mid p} \cG(\chi_\gp)$ and $\mathfrak{d}=\mathfrak{d}_p\mathfrak{d}^{(p)}$ we obtain the desired formula. 
    \end{proof}

\bigskip

\begin{proof}[Proof of  Theorem \ref{thm:padic-L}]
 By  Lemma \ref{lem:ord-reg}, for each $\gp\mid p$, the the  $Q$-ordinary refinement $\widetilde\Pi_\gp$ of $\Pi_\gp$ is $Q$-regular, hence 
 Theorem  \ref{thm:p-adic-interpolation}  applies.    The interpolation formula in Theorem \ref{thm:padic-L} then  follows immediately since by   
Theorem \ref{thm:manin}  one has
 \[\int_{\Cl(p^\infty)} \varepsilon^j(x) \chi(x)  d\mmu_{\widetilde\Pi}^\eta(x)=\int_{\Cl(p^\infty)}  \chi(x)  d\mmu_{\phi_{\widetilde\Pi}}^{\eta,j}(x).\qedhere\]  
    \end{proof}

\bigskip
\subsection{Non-vanishing of twists}
\subsubsection{\bf The main theorem.}

Having established the Manin relations in our context (see Theorem \ref{thm:manin} above), we can now prove a non-vanishing result for twisted $L$-functions using a 
method that goes back to Manin and Greenberg. Such a technique to prove non-vanishing of twists has also been used recently by Januszewski~\cite{J-preprint-2} for Rankin--Selberg $L$-functions and Eischen~\cite{ellen} for $L$-functions of unitary groups. However, our results, Theorem \ref{thm:non-vanishing} and 
Corollary \ref{cor:simutaenous} below, are not only independent of these other recent works but are also beyond the scope of results in both \cite{J-preprint-2} and \cite{ellen} as well as previous results obtained by analytic number theoretic methods (\cite{rohrlich}, \cite{barthel-ramakrishnan}, \cite{luo}, \cite{chinta-friedberg-hoffstein}). 

\medskip
\begin{theorem}
\label{thm:non-vanishing}
Let $\mu$ be a pure dominant integral weight such that 
\begin{equation}
\label{eqn:mu-regular-non-vanishing}
\mu_{\sigma,n} >  \mu_{\sigma, n+1}\text{, for all }  \sigma\in \Sigma_\infty. 
\end{equation}
Let $\Pi$ be a cuspidal automorphic representation which is cohomological with respect to the weight $\mu$ and admitting an  $(\eta,\psi)$-Shalika model. 
Assume that for all primes $\gp$ above a  prime number $p$, $\Pi_\gp$ is unramified and   $Q$-ordinary. 
Then for all $j\in \Crit(\mu)$ and for all but finitely many Dirichlet characters $\chi$ of $F$ of   $p$-power conductor we have: 
$$
L\left(\tfrac{1}{2}+j, \Pi \otimes (\chi\circ \N_{F/\Q}) \right) \neq 0.
$$
\end{theorem}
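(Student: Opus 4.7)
My plan is to combine the $p$-adic interpolation formula of Theorem~\ref{thm:padic-L} with the Manin relation of Theorem~\ref{thm:manin}, using them to propagate a ``free'' non-vanishing at the rightmost critical point to every $j\in\Crit(\mu)$ via Weierstrass preparation on the Iwasawa algebra of $\mathcal{C}\ell_\Q^+(p^\infty)$.

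First, Lemma~\ref{lem:ord-reg} promotes each $\Pi_\gp$ to its unique $Q$-regular refinement $\widetilde\Pi_\gp$, so Theorem~\ref{thm:padic-L} produces a bounded measure $\mmu_{\widetilde\Pi}^\eta\in\cO[[\Cl(p^\infty)]]$, which is independent of $j\in\Crit(\mu)$ by Theorem~\ref{thm:manin}. The explicit constant appearing in the interpolation formula is a product of a non-zero rational number $\gamma$, a power of the discriminant, Gauss sums of the ramified local components (non-zero since $\chi_\gp$ is ramified), inverses of the $U_\gp$-eigenvalues $\alpha_\gp$ (non-zero by $Q$-ordinarity) and Sun's archimedean zeta integral~\eqref{eqn:sun}, all of which are non-zero. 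Consequently, for every $j\in\Crit(\mu)$ and every finite-order $\chi$ on $\Cl(p^\infty)$ of conductor $\beta_\gp\geqslant 1$ at each $\gp\mid p$, the vanishing of $\int\varepsilon^j\chi\,d\mmu_{\widetilde\Pi}^\eta$ is equivalent to the vanishing of $L(j+\tfrac{1}{2},\Pi\otimes\chi)$.

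Next, set $j^\star:=\min_\sigma\mu_{\sigma,n}$, the largest integer in $\Crit(\mu)$. The purity relation $\mu_{\sigma,n}+\mu_{\sigma,n+1}=\w$ combined with hypothesis~\eqref{eqn:mu-regular-non-vanishing} forces $j^\star+\tfrac12\geqslant\tfrac{\w}{2}+1$, which lies on or to the right of the edge of the critical strip for the unitary normalization of $\Pi\otimes\chi$. Since $\Pi\otimes|\det|^{\w/2}\otimes\chi$ is unitary cuspidal for every finite-order Hecke character $\chi$, the theorem of Jacquet--Shalika gives $L(j^\star+\tfrac12,\Pi\otimes\chi)\neq 0$ for every such $\chi$. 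Combined with the previous paragraph, this shows that the twisted measure $\varepsilon^{j^\star}\!\cdot\mmu_{\widetilde\Pi}^\eta$ pairs non-trivially with every finite-order character of $\Cl(p^\infty)$ of $p$-power conductor.

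To finish, push $\mmu_{\widetilde\Pi}^\eta$ forward along the norm $\N:\Cl(p^\infty)\to\mathcal{C}\ell_\Q^+(p^\infty)$, obtaining $\mmu^\N\in\cO[[\mathcal{C}\ell_\Q^+(p^\infty)]]$. Using $\varepsilon=\varepsilon_\Q\circ\N$, pairing $\mmu^\N$ against $\varepsilon_\Q^j\chi$ for a Dirichlet character $\chi$ of $p$-power conductor recovers $L(j+\tfrac12,\Pi\otimes(\chi\circ\N_{F/\Q}))$ up to a non-zero factor. Decomposing $\cO[[\mathcal{C}\ell_\Q^+(p^\infty)]]\cong\prod_\omega\cO[[T]]$ via the characters $\omega$ of the torsion subgroup of $\mathcal{C}\ell_\Q^+(p^\infty)$, the non-vanishing at $j^\star$ forces every $\omega$-component of $\varepsilon_\Q^{j^\star}\!\cdot\mmu^\N$ to be a non-zero element of $\cO[[T]]$. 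For any $j\in\Crit(\mu)$ one has $\varepsilon_\Q^j\!\cdot\mmu^\N=\varepsilon_\Q^{j-j^\star}\cdot(\varepsilon_\Q^{j^\star}\!\cdot\mmu^\N)$, and twisting by the continuous character $\varepsilon_\Q^{j-j^\star}$ is an $\cO$-linear automorphism of $\cO[[\mathcal{C}\ell_\Q^+(p^\infty)]]$ preserving non-vanishing of individual $\omega$-components. Weierstrass preparation in $\cO[[T]]$ then bounds the zero locus on each component by a finite set, and summing over the finitely many $\omega$ yields the desired finite exceptional set of Dirichlet characters. The key conceptual step is the Manin relation, which alone allows the innocuous non-vanishing at $j^\star$ to be translated into the genuine non-vanishing statement at every $j\in\Crit(\mu)$; everything else is classical non-vanishing of $L$-functions plus Weierstrass preparation.
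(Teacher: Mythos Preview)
Your proof is correct and follows essentially the same route as the paper: push the measure forward along the norm to $\mathcal{C}\ell_\Q^+(p^\infty)$, use Jacquet--Shalika non-vanishing at a critical $j$ with $j+\tfrac12$ on or beyond the edge of the critical strip to see that every $\omega$-branch of the Iwasawa algebra is non-zero, and then invoke Weierstrass preparation. One small imprecision: twisting by $\varepsilon_\Q^{j-j^\star}$ does not literally preserve each individual $\omega$-component but rather permutes them (by a shift of $\omega^{j-j^\star}$ on the torsion part); since you have already shown that \emph{all} $\omega$-components are non-zero at $j^\star$, the conclusion is unaffected, but the sentence should be adjusted accordingly.
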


We begin with a few comments. Since 
$\Pi^\circ = \Pi \otimes |\cdot|^{\w/2}$ is a  unitary cuspidal automorphic representation, we see that 
$\tfrac{1+\w}{2}$ is the center of symmetry for the $L$-function of $\Pi$:  
$$
L\left(\tfrac{1+\w}{2}, \Pi \otimes \chi\right) = L\left(\tfrac{1}{2}, \Pi^\circ\otimes \chi\right). 
$$
By regularity one knows that $\Crit(\mu)$ is non-empty and 
condition  \eqref{eqn:mu-regular-non-vanishing} is equivalent to assuming that  $\Crit(\mu)$ has at least two elements. 
If $\Pi$ is unitary then $\w = 0$ and  $\tfrac{1}{2} \in \Crit(\Pi \otimes \chi) = \Crit(\mu)$, whence Theorem \ref{thm:main-theorem} is a particular case of  Theorem \ref{thm:non-vanishing}. 

\smallskip
If Leopoldt's conjecture holds for $F$ at $p$ then one readily obtains a statement for all but finitely many 
$p$-power conductor Hecke  characters, as opposed to Dirichlet characters. 

\smallskip

We show non-vanishing of critical values of twisted  $L$-functions by showing  non-vanishing statement about distributions
on the cyclotomic $\Z_p$-extension  of $F$.  Recall  the $p$-adic cyclotomic character 
\[\varepsilon :{\mathcal{C}\!\ell_{\Q}^+}(p^\infty)\xrightarrow{\sim}  \Z_p^\times =  \mu_{2p} \times (1+2p\Z_p)\]
 the first component of which is given by the Teichm\"uller character $\omega$, while the fixed field of the
kernel of the second component $\varepsilon \omega^{-1}$ is the cyclotomic $\Z_p$-extension of $\Q$. Then by a well-known result due to Serre there is an isomorphism $\cO[[1+2p\Z_p]]\simeq\cO[[T]]$  sending $1+2p$ to $1+T$. 
Composing with the norm map  $\N_{F/\Q}: \Cl(p^\infty)\to {\mathcal{C}\!\ell_{\Q}^+}(p^\infty)$  allows us to lift Dirichlet characters to Hecke characters over $F$, thus to push-forward of 
a measure   on $\Cl(p^\infty)$, such as $\mu_{\widetilde\Pi}$, to a measure on ${\mathcal{C}\!\ell_{\Q}^+}(p^\infty)$. 
Further composing with $\omega^m: \mu_{2p}\to \cO^\times$ for $0\leqslant m \leqslant p-1$   allows us to define a measure 
on $1+2p\Z_p$, {\it i.e.}, an element   $\omega^m(\mu_{\widetilde\Pi})\in \cO[[T]]$. 

\begin{proof}
We will first show that $\omega^m(\mu_{\widetilde\Pi}) \neq 0$  for all $m\in \Z$.
  By the interpolation property in Theorem \ref{thm:p-adic-interpolation}, the measure $\omega^m(\mu_{\widetilde\Pi})$ interpolates the 
  algebraic parts of  $L(\tfrac{1}{2} + j, \Pi \otimes \omega^{m-j}\chi)$ for $j\in\Crit(\mu)$ and $\chi$ runs over all  
  Dirichlet characters of    (non-trivial) $p$-power order and conductor. 
Our hypothesis \eqref{eqn:mu-regular-non-vanishing} implies that we find $j\in\Crit(\mu)$ satisfying $j> \tfrac{\w}{2}$, hence
 $\tfrac{1}{2} + j$ lies outside the interior of the critical strip $\tfrac{w}{2} < \Re(s) < \tfrac{w}{2}+1$
 for $L(s,\Pi)$ and thus  $L(\tfrac{1}{2} + j, \Pi \otimes \omega^{m-j}\chi)\neq 0$. 
 Therefore $\omega^m(\mu_{\widetilde\Pi}) \neq 0$ as claimed.

  By the Weierstrass preparation theorem, a non-zero element of $ \cO[[T]]$ admits only finitely many zeros in $\bar\Z_p$. 
  Again by Theorem \ref{thm:p-adic-interpolation} this means that, given {\it any} $j\in\Crit(\mu) $ and $m$,  there are at most finitely many Dirichlet characters $\chi$     of  $p$-power order and conductor such that $L(\tfrac{1}{2} + j, \Pi \otimes \omega^{m-j}\chi) = 0.$ Since any  
 $p$-power   conductor Dirichlet character is of that form for some $0\leqslant m \leqslant p-1$, the theorem follows. 
\end{proof}

\subsubsection{\bf Variations}

\begin{corollary}[Nearly-ordinary case]
\label{cor:stronger-non-vanishing}
Under the hypotheses of Theorem \ref{thm:non-vanishing}, let $\nu$ be a finite order  character of $\Cl(p^\infty)$. Then for all but finitely many Dirichlet  characters $\chi$ of finite order and with $p$-power conductor we have: 
$$ L(\tfrac{\w+1}{2}, \Pi\otimes\nu \chi)\neq 0.  $$
\end{corollary}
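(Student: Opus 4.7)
The plan is to imitate the strategy of Theorem~\ref{thm:non-vanishing}, with the measure $\mmu_{\widetilde\Pi}$ replaced by its twist $\mmu^\nu:=\nu\cdot\mmu_{\widetilde\Pi}\in\cO[[\Cl(p^\infty)]]$, defined by $\int f\,d\mmu^\nu:=\int f\nu\,d\mmu_{\widetilde\Pi}$; this makes sense because the finite-order character $\nu$ is $\cO$-valued after possibly enlarging $\cO$. By Theorem~\ref{thm:padic-L} applied to $\nu\chi$ in place of $\chi$---valid whenever $\nu\chi$ has conductor $\geq 1$ at every $\gp\mid p$, which holds automatically once the $p$-power conductor of $\chi$ exceeds that of $\nu$ at each $\gp$---the measure $\mmu^\nu$ interpolates
\[
\int_{\Cl(p^\infty)}\varepsilon^j(x)\chi(x)\,d\mmu^\nu(x)\;=\;C_{j,\nu\chi}\cdot L\!\left(j+\tfrac12,\Pi\otimes(\nu\chi)_f\right),
\]
with the same explicit non-zero constant $C_{j,\nu\chi}$ as in Theorem~\ref{thm:padic-L}. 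Since the corollary treats $\tfrac{\w+1}{2}$ as a critical value, we have $\w$ even and $\tfrac{\w}{2}\in\Crit(\mu)$ (see \S\ref{sec:weights-repns}), so the specialization $j=\tfrac{\w}{2}$ captures exactly the central values $L(\tfrac{\w+1}{2},\Pi\otimes\nu\chi)$ to be controlled.

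I would then push $\mmu^\nu$ forward along $\N_{F/\Q}\colon\Cl(p^\infty)\to\mathcal{C}\!\ell_{\Q}^+(p^\infty)$ and decompose along the Teichm\"uller character to obtain components $\omega^m(\mmu^\nu)\in\cO[[T]]$ via Serre's isomorphism $\cO[[1+2p\Z_p]]\simeq\cO[[T]]$. The crucial step---the direct analogue of the first half of the proof of Theorem~\ref{thm:non-vanishing}---is to verify that $\omega^m(\mmu^\nu)\not\equiv 0$ for every $m$. For this I would invoke hypothesis~\eqref{eqn:mu-regular-non-vanishing} to select some $j\in\Crit(\mu)$ with $j\geq\tfrac{\w+1}{2}$, ensuring $j+\tfrac12-\tfrac{\w}{2}\geq 1$ lies outside the region of possible vanishing of the unitary normalization $\Pi^\circ=\Pi\otimes|\cdot|^{\w/2}$. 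For any Dirichlet character $\chi=\omega^{m-j}\psi$ with $\psi$ of sufficiently large $p$-power conductor, the value $L(j+\tfrac12,\Pi\otimes\nu\chi)$ is then non-zero and the interpolation formula identifies it with a non-zero multiple of the evaluation of $\omega^m(\mmu^\nu)$ at the corresponding point of the open unit disk, forcing $\omega^m(\mmu^\nu)\not\equiv 0$.

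Once each Teichm\"uller component of $\mmu^\nu$ is non-zero, the Weierstrass preparation theorem grants each only finitely many zeros in the open unit disk. Multiplication by $\varepsilon^{\w/2}=\omega^{\w/2}\langle\cdot\rangle^{\w/2}$ shifts Teichm\"uller indices by $\w/2$ and scales by the unit $\langle\cdot\rangle^{\w/2}$, inducing an automorphism of $\cO[[T]]$; in particular, the twisted measure $\tilde\mmu:=\varepsilon_{\cyc}^{\w/2}(\mmu^\nu)$ also has non-vanishing Teichm\"uller components, each with only finitely many zeros. Hence for all but finitely many Dirichlet $\chi$ of $p$-power conductor,
\[
\int_{\Cl(p^\infty)}\chi(x)\,d\tilde\mmu(x)\;=\;\int_{\Cl(p^\infty)}\varepsilon^{\w/2}(x)\chi(x)\,d\mmu^\nu(x)\;\neq\; 0,
\]
and the interpolation formula at $j=\tfrac{\w}{2}$ identifies this integral with a non-zero multiple of $L(\tfrac{\w+1}{2},\Pi\otimes\nu\chi)$. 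The main technical point---the only genuinely new step compared with Theorem~\ref{thm:non-vanishing}---is tracking the conductor of $\nu\chi$ at each prime above $p$ so as to ensure the applicability of Theorem~\ref{thm:padic-L}; this restriction excludes only finitely many Dirichlet characters and is therefore compatible with the conclusion.
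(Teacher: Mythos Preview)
Your proposal is correct and follows essentially the same approach as the paper. The paper's one-line proof says to push forward $\mmu_{\widetilde\Pi}$ along the \emph{twisted norm map} $[x]\mapsto \nu(x)[\N_{F/\Q}x]$ and then proceed \emph{mutatis mutandis} as in Theorem~\ref{thm:non-vanishing}; your construction $\mmu^\nu:=\nu\cdot\mmu_{\widetilde\Pi}$ followed by pushforward along the ordinary norm is literally the same operation, and the rest of your argument is a faithful unpacking of what ``mutatis mutandis'' means here. Your explicit tracking of the conductor condition on $\nu\chi$ and of the implicit parity assumption $\w$ even (needed for $\tfrac{\w}{2}\in\Crit(\mu)$) are details the paper leaves to the reader.
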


\begin{proof} Use the twisted norm map $[x]\mapsto \nu(x) [\N_{F/\Q}x]$ to push forward $\mu_{\widetilde\Pi}$ to a measure on ${\mathcal{C}\!\ell_{\Q}^+}(p^\infty)$.  Then proceed  {\it mutatis mutandis} as in the proof of  Theorem \ref{thm:non-vanishing}.
\end{proof}

This result is slightly stronger because the representation $\Pi \otimes \nu$, even though of cohomological type and admitting a 
Shalika model, is no longer ordinary at $p$, nor spherical.

The following corollary of Theorem~\ref{thm:non-vanishing} follows from the fact that we have non-vanishing 
{\it for all but finitely many} Dirichlet  characters $\chi$ of finite order and with $p$-power conductor. 

\begin{corollary}[Simultaneous non-vanishing] 
\label{cor:simutaenous}
 For $1 \leqslant k \leqslant r$ fix  $n_k\in \Z_{>0}$  and let 
$\mu_k$ be a pure  dominant integral weight for $\GL_{2n_k}$ over  $F$. Suppose that each $\mu_k$ satisfies the regularity condition in (\ref{eqn:mu-regular-non-vanishing}) and that its purity weight $\w_k$ is even. Let $\Pi_k$ be a cuspidal automorphic representation
of $\GL_{2n_k}(\A_F)$ of cohomological weight $\mu_k$ admitting a Shalika model. 
For a  prime number $p$, suppose that each $\Pi_k$ is unramified and  $Q$-ordinary  at $p$.  
Then, for all but finitely many Dirichlet  characters $\chi$ of   $p$-power conductor, we have: 
$$
L\left(\tfrac{\w_{1}+1}{2}, \Pi_1\otimes \chi\right)L\left(\tfrac{\w_2+1}{2}, \Pi_2 \otimes \chi\right) \cdots 
L\left(\tfrac{\w_r+1}{2}, \Pi_r \otimes \chi\right) \ \neq \ 0.
$$
\end{corollary}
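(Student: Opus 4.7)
The plan is to deduce simultaneous non-vanishing directly from the individual non-vanishing statement of Theorem~\ref{thm:non-vanishing} by a simple union-of-finite-sets argument, since the hypotheses on each $\Pi_k$ are exactly those required to apply that theorem, and any finite union of finite sets is finite.

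First I would verify that for each index $k$ the hypotheses of Theorem~\ref{thm:non-vanishing} are satisfied and that the central critical value $\tfrac{\w_k+1}{2}$ falls within its scope. By the discussion in \S\ref{sec:weights-repns} following \eqref{eq:crit}, the central point $\tfrac{\w_k+1}{2} = \tfrac{1}{2}+\tfrac{\w_k}{2}$ is critical for $L(s,\Pi_k\otimes\chi)$ precisely when $\tfrac{\w_k}{2}\in\Crit(\mu_k)$, which by the parity remark following \eqref{eq:crit} is equivalent to the assumption that $\w_k$ is even. The regularity condition \eqref{eqn:mu-regular-non-vanishing} on $\mu_k$ guarantees $\Crit(\mu_k)$ has at least two elements, which together with the $Q$-ordinary and spherical conditions at all $\gp\mid p$ matches the assumptions of Theorem~\ref{thm:non-vanishing}.

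Next, for each $k\in\{1,\dots,r\}$ I would apply Theorem~\ref{thm:non-vanishing} to the representation $\Pi_k$ and the particular critical integer $j_k=\tfrac{\w_k}{2}\in\Crit(\mu_k)$. This produces a finite set $\mathcal{E}_k$ of Dirichlet characters of $p$-power conductor such that
\[
L\!\left(\tfrac{\w_k+1}{2},\,\Pi_k\otimes(\chi\circ\N_{F/\Q})\right)\;\neq\;0
\quad\text{for every }\chi\notin\mathcal{E}_k.
\]
Setting $\mathcal{E}=\mathcal{E}_1\cup\cdots\cup\mathcal{E}_r$, which is a finite union of finite sets and therefore finite, we conclude that for every Dirichlet character $\chi$ of $p$-power conductor with $\chi\notin\mathcal{E}$, all $r$ factors in the product are simultaneously non-zero, which is the desired conclusion.

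There is no serious obstacle here since the real work is subsumed in Theorem~\ref{thm:non-vanishing}; the corollary is essentially a formal consequence and its proof amounts to careful bookkeeping. The only point that needs attention is to make explicit that the exceptional set in Theorem~\ref{thm:non-vanishing} depends on $j$ but not on $\chi$, so that taking a finite union across the indices $k=1,\dots,r$ (each contributing a single value $j=\w_k/2$) still yields a finite exceptional set and hence a cofinite set of good twisting characters.
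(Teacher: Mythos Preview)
Your proposal is correct and matches the paper's approach exactly: the paper states that the corollary ``follows from the fact that we have non-vanishing \emph{for all but finitely many} Dirichlet characters,'' which is precisely your union-of-finite-exceptional-sets argument. Your added explanation of why the evenness of $\w_k$ is needed (so that $\tfrac{\w_k}{2}\in\Crit(\mu_k)$) is a useful gloss that the paper leaves implicit.
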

Let's note that this is a simultaneous non-vanishing result at  the  central point. We will leave it to the reader to formulate the stronger version of simultaneous non-vanishing combining  Corollaries \ref{cor:stronger-non-vanishing} and 
 \ref{cor:simutaenous}.  

\medskip

As a very concrete example illustrating an application of simultaneous non-vanishing to algebraicity results, let's consider 
the  unitary cuspidal automorphic representation $\pi(\Delta)$ of $\GL_2(\A)$ associated to the 
 Ramanujan  $\Delta$-function. 
 A particular case of Corollary \ref{cor:simutaenous} gives infinitely many Dirichlet characters $\chi$ such that 
\[ 
L\left(17, \Sym^3(\Delta)\otimes \chi\right) L\left(6, \Delta\otimes \chi\right) \ = \ 
L\left(\tfrac{1}{2}, \Sym^3(\pi(\Delta))\otimes \chi\right) L\left(\tfrac{1}{2}, \pi(\Delta)\otimes \chi\right)  \neq  0. 
\]
For such a character we get from \cite[Cor. 5.2]{raghuram-imrn} the following identity of $L$-values: 
\[ 
L\left(\tfrac{1}{2}, \Sym^5(\pi(\Delta))\otimes \chi\right) \ = \ 
\frac{L\left(\tfrac{1}{2}, \Sym^3(\pi(\Delta)) \times \Sym^2(\pi(\Delta))\right)}
{L\left(\tfrac{1}{2}, \Sym^3(\pi(\Delta))\otimes \chi\right) L\left(\tfrac{1}{2}, \pi(\Delta)\otimes \chi\right)}. 
\]
Using the rationality result in \cite[Thm.  1.1]{raghuram-imrn} for the $L$-value in the numerator of the right hand side, and the 
 \cite[Thm.  1.3]{raghuram-imrn} for the $L$-values in the denominator of the right hand side,  
we get a new rationality result for: 
\[
L\left(\tfrac{1}{2}, \Sym^5(\pi(\Delta))\otimes \chi\right)  \ = \ 
L\left(28, \Sym^5(\Delta)\otimes \chi\right). 
\] 
Similarly, using the results of \cite{raghuram-forum}, and simultaneous non-vanishing for the central values of 
the first and third symmetric power $L$-functions of a Hilbert cusp form, one may now generalize this to get new rationality results 
for the symmetric fifth power $L$-functions of a Hilbert cusp form.

\begin{small}
\bibliographystyle{plain}

\end{small}

\end{document}